\newtheorem{thm}{Theorem}[section]
\newtheorem{lem}[thm]{Lemma}
\newtheorem{cor}[thm]{Corollary}
\newtheorem{prop}[thm]{Proposition}
\theoremstyle{definition}
\newtheorem{ex}[thm]{Example} 
\newtheorem{defn}[thm]{Definition}
\newtheorem{rem}[thm]{Remark}
\numberwithin{equation}{subsection}
\numberwithin{thm}{section}
\newcommand{\Hom}{\text{\rm Hom}}
\newcommand{\End}{\operatorname{End}}
\newcommand{\ext}{\operatorname{ext}}
\newcommand{\Ext}{\operatorname{Ext}}
\newcommand{\Z}{\mathbb Z}
\newcommand{\A}{\mathcal A}
\newcommand{\cO}{\mathcal O}
\newcommand{\C}{\mathbb C}
\newcommand{\fg}{{\mathfrak{g}}}
\newcommand{\tothewall}{T^{\mu}_{\lambda}}
\newcommand{\fromthewall}{T_{\mu}^{\lambda}}
\newcommand{\hd}{\operatorname{hd}}
\newcommand{\rad}{\operatorname{rad}}
\newcommand{\soc}{\operatorname{soc}}
\newcommand{\E}{\mathcal E}
\newcommand{\stab}{\operatorname{Stab}}
\newcommand{\ch}{\operatorname{ch}}
\let\tothe\tothewall
\let\fromthe\fromthewall
\newcommand{\D}{\mathcal D}
\newcommand{\B}{\mathcal B}
\newcommand{\se}{\textsf{\tiny E}}
\newcommand{\wC}{\widetilde{\mathcal C}}
\newcommand{\surj}{\twoheadrightarrow}
\newcommand{\wE}{\widetilde\E}
\newcommand{\w}{\widetilde}
\begin{document}

\title{Cohomology in singular blocks for a quantum group at a root of unity}\author{Hankyung Ko}
\address{Max-Planck Institute for Mathematics, Vivatsgasse 7,
53111 Bonn, Germany}
\email{hankyung@math.uni-bonn.de}

\begin{abstract}
Let $U_\zeta$ be a Lusztig quantum enveloping algebra associated to a complex semisimple Lie algebra $\mathfrak g$ and a root of unity $\zeta$. When
$L,L'$ are irreducible $U_\zeta$-modules having regular highest weights, the dimension of $\Ext^n_{U_\zeta}(L,L')$ can be calculated in terms of the coefficients of appropriate Kazhdan-Lusztig polynomials associated to the affine Weyl group of $U_\zeta$. This paper shows for $L,L'$ irreducible modules in a singular block that $\dim\Ext^n_{U_\zeta}(L,L')$ is explicitly determined using the coefficients of parabolic Kazhdan-Lusztig polynomials. This also computes the corresponding cohomology for $q$-Schur algebras and many generalized $q$-Schur algebras. The result depends on a certain parity vanishing property which we obtain from the Kazhdan-Lusztig correspondence and a Koszul grading of Shan-Varagnolo-Vasserot for the corresponding affine Lie algebra.  
\end{abstract}
\maketitle

\section{Introduction}
Let $\mathfrak g$ be a complex semisimple
Lie algebra with root system $R$. Let $\zeta\in\C$ be a primitive $l$-th root of unity for some positive integer $l$.
Let $U_\zeta=U_\zeta(\mathfrak g)$ be the Lusztig's root of unity quantum enveloping algebra (or ``quantum group'') associated
to $\mathfrak g$ over $\C$ introduced in \cite{lus}.
Consider the category $U_\zeta$-mod consisting of integrable finite dimensional $U_\zeta$-modules of type $1$. This is a highest weight category in the sense of \cite{CPShwc} with standard modules $\Delta(\lambda')$, costandard modules $\nabla(\lambda')$, irreducible modules $L(\lambda')$ indexed by their highest weights $\lambda'\in X^+$ where $X^+$ is the set of dominant weights. The affine Weyl group $W_l$ acts on the weight lattice $X$. 
We can write any weight as $w.\lambda$ for $w\in W_l$ and $\lambda\in \overline{C^-}$ where $C^-$ is the standard antidominant $l$-alcove. We view $W_l$ as the Coxeter group ($W_l, S_l)$ where the set of simple reflections $S_l$ consists of the reflections through the walls of $C^-$. Let $I$ be the subset of $S_l$ consisting of the simple reflections that fix $\lambda$. 
The Coxeter system ($W_l,S_l$) also fixes a natural length function $\ell:W_l\to \Z$.
Let $W_I:=\stab_{W_l}(\lambda)$ the subgroup of $W_l$ generated by $I$. Each left coset of $W_{I}$ has a unique minimal element. The set of these minimal coset representatives is denoted by $W^I$. 
If $\lambda'\in W_l.\lambda$, then $\lambda'=w.\lambda$ for a unique $w\in W^I$. By the linkage principle $U_\zeta$-mod decomposes into blocks consisting of the $U_\zeta$-modules whose composition factors have highest weight in the same $W_l$-orbit.

The characters $\ch\Delta(w.\lambda)=\ch\nabla(w.\lambda)$ are given by Weyl's character formula. If $\lambda$ is regular, it is known for most $l$ by works of Kazhdan-Lusztig \cite{KL,KL12,KL3,KL4} and Kashiwara-Tanisaki \cite{KTaffine1,KTaffine2} (see \S\ref{ssrkl} for more details; the condition on $l$ is explained in \S\ref{sskl}) that the characters of irreducibles are given by (an evaluation of) the Kazhdan-Lusztig polynomials, namely,
\begin{equation}\label{inlcf}
\operatorname{ch} L(w.\lambda)= \sum_y (-1)^{\ell( w)-\ell( y)} P_{ y,  w}(-1) \operatorname{ch}\Delta( y. \lambda)
\end{equation}
where the sum is taken over $W^+_l=\{y\in W_l\ |\ y.\lambda\in X^+\}$ (the definition does not depend on $\lambda$ as long as it is regular). See \S \ref{sprem} for this and more notation.

In case the formula \eqref{inlcf} is valid, we further have 
\begin{equation}\label{rhLCF}
\sum_{n=0}^\infty \dim \Ext^n_{U_\zeta}(\Delta(y.\lambda),L(w.\lambda))t^n=t^{\ell( w)-\ell( y)}\bar{P}_{y,w}
\end{equation}
for all $ y, w\in W^+_l$. The bar on the polynomial is the automorphism on $\Z[t,t^{-1}]$ that maps $t$ to $t^{-1}$.

For singular weights (which Lusztig's conjecture does not exclude), one can use the translation functor from a regular orbit to a singular orbit. Applying the translation functor to \eqref{inlcf}, it is immediate that the irreducible character formula for a general dominant weight $w.\lambda$ ($\lambda\in\overline{C^-}$, $w\in W^+_l\cap W^I$) is the alternating sum of the regular character formula:
\begin{align*}\label{singlcf}
\operatorname{ch} L(w.\lambda)&= \sum_y\sum_{x\in W_I} (-1)^{\ell( w)-\ell( yx)} P_{ yx,  w}(-1) \operatorname{ch}\Delta( y. \lambda)
\end{align*}
where the $y$ runs through $W^+_l\cap W^I$.

However, to have an extension formula in singular blocks, the translation is not enough because we cannot determine how to ``sum'' the formula \eqref{rhLCF}. We need a certain parity vanishing property to make it work. This property follows from standard Koszul grading of \cite{SVV}. 

Then the result (Theorem \ref{thm}) is that
\begin{equation}\label{inthLCF}
\sum_{n=0}^\infty \dim \Ext^n_{U_\zeta}(\Delta(y.\lambda),L( w.\lambda))t^n=t^{\ell( w)-\ell( y)}\sum_{x\in W_I} (-1)^{\ell(x)}\bar{P}_{yx,w},
\end{equation}
for $ y, w\in W^+_l\cap W^I$. A similar formula in the finite case was obtained by Soergel \cite{soencohom}, and the formula in our case was conjectured in \cite[Conjecture III]{PS14}. To prove it, we translate the problem into the affine case using the Kazhdan-Lusztig correspondence (\S \ref{sskl}) and use the result of Shan-Varagnolo-Vasserot on affine Lie algebras to get the parity vanishing (\S \ref{sssvv}). Then \eqref{inthLCF} is obtained in \S \ref{main}. 
Before that, we introduce the notion of parity, review some generalities and check necessary properties of translation functors (\S \ref{sprem}). Most of the earlier sections does more than what we need for the proof of our main theorem, tries to depend less on Koszulity, and is developed under an intention of applying them to modular representation theory.

\section*{Acknowledgement}
The author thanks Brian Parshall and Leonard Scott for explaining their conjecture and related subjects to her, pointing out errors in previous proofs, encouraging her to write this into a paper, and carefully reading several versions of this paper. 

\section{Representations for quantum groups at a root of unity}\label{sprem}

Let $U_\zeta$ be a Lusztig quantum group over $\C$ at a primitive $l$-th root of unity \cite[II.H]{J}. Let $\mathcal C^\zeta=U_\zeta$-mod be the category of type $1$ integrable finite dimensional $U_\zeta$-modules. A general theory for the quantum case is developed in \cite{APW}. Though \cite{APW} has restrictions on $l$, it will not be necessary using some later results \cite{AndLink}. So the order $l$ of $\zeta$ can be any positive integer.

We mostly follow the notation of \cite{J} (see the beginnings of \cite[II.1, II.6]{J}). Let $R$ be the root system for $U_\zeta$, $R^+$ be a fixed choice of positive roots, $X$ be the set of (integral) weights, $X^+$ the set of dominant weights, $C$ the bottom dominant $l$-alcove, and $C^-$ the top antidominant $l$-alcove. Then $U_\zeta$-mod is a highest weight category with the poset $X^+=(X^+,\uparrow)$. We denote its standard modules by $\Delta(\lambda)$, costandard modules by $\nabla(\lambda)$, irreducible modules by $L(\lambda)$, and indecomposable tilting modules by $X(\lambda)$ when their highest weight is $\lambda\in X^+$. The projective cover (which is also the injective envelope) of $L(\lambda)$ is denoted by $P(\lambda)$. 

Letting $W$ be the finite Weyl group of $R$, the affine Weyl group $W_l$ is defined as $l\Z R\rtimes W$ and acts on the set $X$ of weights. Let $\rho$ be the sum of all fundamental weights. Equivalently, $\rho$ is the half sum of all positive roots. The action we use is the dot action, that is, $w.\lambda=w(\lambda+\rho)-\rho$ for $w\in W_l, \lambda\in X$. The $W_l$ orbits partition the weights, hence also partition $X^+$. 
This, by the linkage principle, gives a decomposition of the representation category into orbits (we do not call them blocks, because some of the components obtained here are not indecomposable). See \cite[II.6]{J} for a much more detailed discussion. 

Any weight $\lambda'$ (i.e., an element of $X$) is written as $w.\lambda$ for some $w\in W_l$ and a unique $\lambda$ in $\overline{C^-_\Z}=\overline{C^-}\cap X$. We call a weight $\lambda'$ regular if $\lambda \in {C^-}$. We call $\lambda'$ singular if it is not regular. If $\lambda'$ is dominant, the orbit containing $\lambda'$ is represented by this $\lambda\in \overline{C^-}$. 
The choice of $w\in W_l$ is unique if and only if $\lambda$ is regular. If $\lambda$ is regular, this identifies $X^+\cap W_l.\lambda$ with the subset $$W_l^+:=\{w\in W_l\ |\ w.\lambda \in X^+\}$$ of $W_l$. For a general weight $\lambda$, we have preferred representatives. Recall that $W_l$ is generated by the subset $S_l$, which we choose to correspond to the simple reflections through the walls of $C^-$. Let $I:=\{s\in S_l\ |\ s.\lambda=\lambda\}$, $W_I:=\{ w\in W_l\ |\ w.\lambda=\lambda\}$, and let $W^I$ be the set of shortest coset representatives in $W_l/W_I$. Then for $w\in W_l^+$, we have $w\in W^I$ if and only if $w.\lambda$ is in the upper closure of the alcove $w.C^-$. Now define $$W^+(\lambda):=W^{I}\cap W_l^+.$$ We identify $W^+(\lambda)$ with the set of dominant weights in the orbit of $\lambda$. The uparrow ordering of $X^+$ restricted to $W^+(\lambda).\lambda$ agrees with the Coxeter ordering of $W_l$ restricted to $W^+(\lambda)$ \cite[II.8.22]{J}.

We call $\lambda'$ subregular if $\lambda$ is in a codimension one facet in $\overline{C^-}$. Existence of a regular weight is equivalent to $l\geq h$, the Coxeter number. For existence of subregular weights, we have the following elementary fact. 

\begin{prop}\cite[II.6.3]{J}\label{subregexist}
Suppose a regular weight exists, and $l$ is not $30$ if the type is $E_8$; not $12$ if $F_4$; not $6$ if $G_2$. (These are the Coxeter numbers.) Then any wall of $C^-$ contains a weight, that is, for any $s\in S_l$ there exists $\nu\in X$ with $\stab_{W_l}(\nu)=\{e,s\}$. This is the case, in particular, if $l>h$.
\end{prop}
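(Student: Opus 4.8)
The plan is to reduce the statement to an elementary Diophantine question about the walls of $\overline{C^-}$ and then settle that question by a short inspection of types. Recall that the closure $\overline{C^-}$ of the top antidominant $l$-alcove is cut out by the inequalities $-l\le\langle\nu+\rho,\alpha^\vee\rangle\le 0$ for $\alpha\in R^+$; it is a fundamental domain for the dot action of $W_l$, and its $|S_l|$ walls are the hyperplanes $H_i:=\{\langle\cdot+\rho,\alpha_i^\vee\rangle=0\}$ for the simple roots $\alpha_1,\dots,\alpha_n$ together with the affine wall $H_0:=\{\langle\cdot+\rho,\alpha_0^\vee\rangle=-l\}$, where $\alpha_0^\vee=\sum_i c_i\alpha_i^\vee$ is the highest coroot (equivalently $\alpha_0$ is the highest short root). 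For $\nu\in\overline{C^-}$ the stabilizer $\stab_{W_l}(\nu)$ is generated by the reflections in the walls of $\overline{C^-}$ through $\nu$; hence, writing $s$ for the simple reflection attached to a given wall, the assertion ``there is $\nu\in X$ with $\stab_{W_l}(\nu)=\{e,s\}$'' is equivalent to the existence of an integral weight lying on that wall and on no other wall of $\overline{C^-}$.

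First I would dispose of the finite walls $H_i$: taking $\nu+\rho=-\sum_{j\ne i}\varpi_j$ gives an integral weight with $\langle\nu+\rho,\alpha_i^\vee\rangle=0$, with $\langle\nu+\rho,\alpha_j^\vee\rangle=-1$ for $j\ne i$, and with $\langle\nu+\rho,\alpha_0^\vee\rangle=-(h-1-c_i)>-l$, using $\langle\rho,\alpha_0^\vee\rangle=h-1$ and that a regular weight exists, i.e. $l\ge h$. Such a $\nu$ lies on $H_i$ and on no other wall, so these walls are never an obstruction. The content is concentrated in the affine wall $H_0$. An integral $\nu\in\overline{C^-}$ lying on $H_0$ and on no other wall is, with $b_i:=-\langle\nu+\rho,\alpha_i^\vee\rangle$, the same as a solution in positive integers $b_1,\dots,b_n\ge 1$ of $\sum_i c_ib_i=l$ (and any such solution returns the weight $\nu=-\rho-\sum_i b_i\varpi_i$). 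Substituting $b_i=1+b_i'$ and using $\sum_i c_i=h-1$, this reads: $l-h+1$ lies in the numerical semigroup $\langle c_1,\dots,c_n\rangle\subseteq\Z_{\ge0}$; and $l-h+1\ge 1$ because a regular weight exists.

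It remains to identify this semigroup. Here $\alpha_0^\vee$ is the highest root of the dual root system $R^\vee$, whose simple roots are the $\alpha_i^\vee$, so the $c_i$ are the marks of the highest root of $R^\vee$. These marks all equal $1$ for every irreducible type except $E_8$, $F_4$ and $G_2$ (equivalently, except when $R$ itself is of one of these types); in those three cases $\min_i c_i\ge 2$ while $\{2,3\}\subseteq\{c_1,\dots,c_n\}$, so $\langle c_1,\dots,c_n\rangle=\Z_{\ge0}\setminus\{1\}$. Consequently $H_0$ carries a suitable weight unless $R$ is of type $E_8$, $F_4$ or $G_2$ and $l-h+1=1$, i.e. $l=h$; and $h=30,12,6$ in these three types. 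In particular the condition always holds when $l>h$, and the excluded values of $l$ are precisely the asserted ones. The one step needing genuine care — and where a labelling or normalization slip is easiest, especially for the non-simply-laced $F_4$ and $G_2$ — is this last tabulation: one must verify which Dynkin types have all comarks of the highest coroot at least $2$, and check that in each of those the generated numerical semigroup omits exactly the value $1$.
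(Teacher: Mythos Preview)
The paper does not supply its own proof of this proposition; it is simply quoted from Jantzen, II.6.3. Your argument is correct and is essentially the standard one found there: dispose of the finite walls directly, rewrite the existence of a subregular lattice point on the affine wall as the condition that $l-(h-1)$ lie in the numerical semigroup $\langle c_1,\dots,c_n\rangle$ generated by the comarks, and then inspect types. One cosmetic slip: the phrase ``these marks all equal $1$'' should read ``at least one of these marks equals $1$'' (for instance, for $R$ of type $B_n$ the comarks are $2,\dots,2,1$, not all $1$); but your argument only uses, and only needs, that $1\in\{c_i\}$ outside of types $E_8$, $F_4$, $G_2$, so the conclusion stands.
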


Recall the Coxeter length function $\ell:W_l\to \Z$. By definition the length of the weight $w.\lambda$ is the integer $\ell(\bar w)$ where $\{\bar w\}= W^J\cap wW_J$. We call $w.\lambda$ even if $\ell(\bar w)$ is even, odd if $\ell(\bar w)$ is odd. Also, we say a highest weight module is even if its highest weight is even, odd if its highest weight is odd.

\subsection{Translation functors}\label{sstr}
Fix two weights $\lambda, \mu \in \overline{C^-_\Z}$ and consider the summand $\mathcal C^\zeta_\lambda$ (respectively, $\mathcal C^\zeta_\mu$) of $\mathcal C'=U_\zeta$-mod which consists of the $U_\zeta$-modules whose composition factors are isomorphic to $L(w.\lambda)$ for some $w\in W^+_l$. Denote the translation functor from $\mathcal C^\zeta_\lambda$ to $\mathcal C^\zeta_\mu$ by $\tothewall$. (See, for example, \cite{J} for the algebraic group case. The translation functors in the quantum case are similar and defined in \cite{APW}. Though \cite{APW} assumes that $l$ is an odd prime power, the restriction is unnecessary since we have the linkage principle for all $l$ \cite{AndLink}. See also \cite[\S2.5]{hodge2016remarks}.) Then $\tothewall$ and $\fromthewall$ are biadjoint and both exact. If $\lambda$ and $\mu$ are in the same facet, then $\tothewall$ is an equivalence.

Now assume that $\mu$ is in the closure of the facet containing $\lambda$. \textit{We keep this convention throughout the paper.} Set 
\begin{equation}\label{I,J}
I=\{s\in S_l\ |\ s.\lambda=\lambda\} ,\ \ J=\{s\in S_l\ |\ s.\mu=\mu\}.
\end{equation} Then $W_I=\stab_{W_l}(\lambda)$, $W_J=\stab_{W_l}(\mu)$ are the Coxeter groups generated by $I$ and $J$ respectively. Our convention can now be expressed simply as $I\subset J$.

\begin{prop}\label{tr} Let $y\in W^+(\mu)$. In particular, $y.\mu$ is in the upper closure of the facet containing $y.\lambda$.  
\begin{enumerate}
\item $\tothewall\Delta(yx.\lambda)=\Delta(yx.\mu)=\Delta(y.\mu),$ for any $x\in W_J$.
\smallskip
\item $\fromthewall\Delta(y.\mu)$ has a $\Delta$-filtration whose sections are exactly $\Delta(yx.\lambda)$ where each $x\in W_J/W_I$ occurs with multiplicity one, and we have $$\hd(\fromthewall\Delta(y.\mu))\cong L(y.\lambda).$$
\item $\tothewall L(y.\lambda)=L(y.\mu)$, and $\tothewall L(yx.\lambda)=0$ for any nontrivial element $x\in W_J/W_I$.
\smallskip
\item $[\fromthewall L(y.\mu) : L(y.\lambda)]=|W_J/W_I|$, and we have $$\hd(\fromthewall L(y.\mu))\cong L(y.\lambda), \ \ \soc(\fromthewall L(y.\mu))\cong L(y.\lambda).$$
\item $\tothewall X(yw_J.\lambda)= X(y.\mu)^{\oplus |W_J/W_I|}$, where $w_J$ is the longest element in $W_J$.
\smallskip
\item $\fromthewall X(y.\mu)=X(yw_J.\lambda)$, where $w_J$ is the longest element in $W_J$.
\end{enumerate}
\end{prop}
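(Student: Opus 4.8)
The plan is to derive all six items of Proposition~\ref{tr} from three structural inputs, exactly as in the classical case treated in \cite[II.7 and II.E]{J} (and in the quantum case in \cite{APW}, the restrictions on $l$ there being removable by the general linkage principle of \cite{AndLink}). First, $\tothewall$ and $\fromthewall$ are exact and biadjoint. Second, each is realised as $\mathrm{pr}_\bullet(-\otimes E)$ for a fixed tilting module $E$ (with extremal weight the dominant $W$-conjugate of $\mu-\lambda$, or of its negative) followed by a block projection; hence each carries $\Delta$-filtered, $\nabla$-filtered, and tilting modules again to modules of the respective type, and their effect on formal characters is the familiar ``sum over a $W_J$-coset'' identity (\cite[II.7.6]{J}). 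Third, they commute with the contravariant duality $\tau$ that fixes every $L(\nu)$. The opening assertion, that $y.\mu$ lies in the upper closure of the facet of $y.\lambda$, is the combinatorial ingredient: it follows from $y\in W^J\subseteq W^I$ together with the standing convention $I\subseteq J$ and the alcove geometry of the dot action described in \cite[II.6.2, II.8.22]{J}.

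For (1): $\tothewall\Delta(yx.\lambda)$ is $\Delta$-filtered, and the character identity gives $\ch\tothewall\Delta(yx.\lambda)=\ch\Delta(y.\mu)$ for every $x\in W_J$; since standard characters are linearly independent this forces $\tothewall\Delta(yx.\lambda)\cong\Delta(y.\mu)$. The same identity yields $\ch\fromthewall\Delta(y.\mu)=\sum_{x\in W_J/W_I}\ch\Delta(yx.\lambda)$, and linear independence then forces any $\Delta$-filtration of the $\Delta$-filtered module $\fromthewall\Delta(y.\mu)$ to have each $\Delta(yx.\lambda)$, $x\in W_J/W_I$, as a section exactly once, which is the first half of (2). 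For the head in (2) I would use biadjointness: $\Hom(\fromthewall\Delta(y.\mu),L(\nu))\cong\Hom(\Delta(y.\mu),\tothewall L(\nu))$, which by (3) is zero unless $\nu=y.\lambda$ and is one-dimensional then, so $\hd\fromthewall\Delta(y.\mu)\cong L(y.\lambda)$.

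For (3): $\tothewall L(y.\lambda)$ is a quotient of $\tothewall\Delta(y.\lambda)=\Delta(y.\mu)$, and it is nonzero because the $y.\mu$-weight space is preserved by translation (its dimension equals $\dim L(y.\lambda)_{y.\lambda}=1$); being a nonzero quotient of $\Delta(y.\mu)$ it has simple head $L(y.\mu)$, and applying $\tau$ (which commutes with $\tothewall$ and fixes $L(y.\lambda)$) it is also a nonzero submodule of $\nabla(y.\mu)$, whence $\tothewall L(y.\lambda)\cong L(y.\mu)$. That $\tothewall L(yx.\lambda)=0$ for $x\ne e$ in $W_J/W_I$ is the standard fact that translating onto a wall annihilates every irreducible whose highest weight is not the $\mu$-dominant member of its $W_J$-linkage coset (\cite[II.7.15]{J}, quantum version via \cite{APW}); with (1) and (2) in hand it may alternatively be read off from the character of $\tothewall\fromthewall L(y.\mu)$. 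For (4): $\fromthewall L(y.\mu)$ is $\tau$-self-dual, so socle and head are isomorphic; the head is computed by $\Hom(\fromthewall L(y.\mu),L(\nu))\cong\Hom(L(y.\mu),\tothewall L(\nu))$, nonzero (and one-dimensional) exactly for $\nu=y.\lambda$; the multiplicity is the weight-space count $[\fromthewall L(y.\mu):L(y.\lambda)]=\dim(\fromthewall L(y.\mu))_{y.\lambda}=|W_J/W_I|\cdot\dim L(y.\mu)_{y.\mu}=|W_J/W_I|$, using the character identity for $\fromthewall$ and the fact that $y.\lambda$ is maximal, so only $L(y.\lambda)$ contributes to that weight space.

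Finally (5) and (6) follow as in \cite[II.E.11]{J}. Since translation preserves tilting modules, $\fromthewall X(y.\mu)$ is tilting; translating its $\Delta$-filtration by (1) and (2), and using that $X(y.\mu)$ has $y.\mu$ as its unique maximal weight with $(X(y.\mu):\Delta(y.\mu))=1$, one finds that $\fromthewall X(y.\mu)$ has $yw_J.\lambda$ (the longest element of $yW_J$) as its unique maximal weight with $(\fromthewall X(y.\mu):\Delta(yw_J.\lambda))=1$, so $X(yw_J.\lambda)$ splits off. Applying $\tothewall$ back and using the standard relation $\tothewall\fromthewall\cong(\mathrm{id})^{\oplus|W_J/W_I|}$ on the $\mu$-block (\cite[II.7]{J}) together with the $\Delta$-multiplicity bookkeeping then forces $\fromthewall X(y.\mu)=X(yw_J.\lambda)$ and simultaneously $\tothewall X(yw_J.\lambda)=X(y.\mu)^{\oplus|W_J/W_I|}$. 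The steps I expect to need genuine care are the wall-crossing vanishing in (3) and the tilting bookkeeping in (5)--(6): these are where the precise alcove geometry (the convention $I\subseteq J$ and the role of $w_J$) really enters, and where one must confirm that the algebraic-group arguments of \cite{J} transpose to $U_\zeta$ for arbitrary $l$ via \cite{APW} and \cite{AndLink}; everything else is formal manipulation with exactness, biadjointness and the duality $\tau$.
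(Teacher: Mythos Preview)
The paper's own proof is a bare citation to \cite[II.7.11, 7.13, 7.15, 7.20, II.E.11]{J}, with the remark that the algebraic-group arguments there go through unchanged in the quantum setting via \cite{APW} and \cite{AndLink}. Your sketch is a reconstruction of exactly those arguments, so the approach is the same; most of what you wrote is correct and is precisely how Jantzen proceeds.

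There is, however, a real gap in your multiplicity argument for (4). You assert that $y.\lambda$ is the maximal weight of $\fromthewall L(y.\mu)$, so that the $y.\lambda$-weight space counts $[\fromthewall L(y.\mu):L(y.\lambda)]$. This is false: whenever $\Delta(y.\mu)=L(y.\mu)$ (for instance when $y.\mu$ is minimal in its linkage class) one has $\fromthewall L(y.\mu)=\fromthewall\Delta(y.\mu)$, which by your own (2) contains $\Delta(yw_J.\lambda)$ as a section, so $yw_J.\lambda>y.\lambda$ occurs as a weight and $L(yw_J.\lambda)$ occurs as a composition factor contributing to the $y.\lambda$-weight space. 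The argument Jantzen actually uses goes through projectives (this is \cite[II.7.16]{J}): one shows $\tothewall P(y.\lambda)\cong P(y.\mu)^{\oplus|W_J/W_I|}$ by translating a $\Delta$-filtration of $P(y.\lambda)$ and identifying the top, and then
\[
[\fromthewall L(y.\mu):L(y.\lambda)]=\dim\Hom(P(y.\lambda),\fromthewall L(y.\mu))=\dim\Hom(\tothewall P(y.\lambda),L(y.\mu))=|W_J/W_I|.
\]

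A smaller point on (5)--(6): the functor isomorphism $\tothewall\fromthewall\cong\mathrm{id}^{\oplus|W_J/W_I|}$ that you invoke is not in \cite[II.7]{J}, and while it is true it requires its own argument. Jantzen's proof in \cite[II.E.11]{J} avoids it, working directly with $\Delta$-multiplicities of the tilting modules and using that tiltings are determined up to isomorphism by those multiplicities; you already flagged this part as needing care, and it does.
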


\begin{proof}
 See \cite[II.7.11, 7.13, 7.15, 7.20]{J} for (1)-(4) and \cite[II.E.11]{J} for (5),(6). They are for algebraic groups and some of them are less general, but all of them are proved in the same way for our setting. 
\end{proof}

\begin{prop}\label{transitivity}
Let $\lambda,\nu,\mu\in\overline{C^-_\Z}$ be such that $\nu$ is contained in the closure of the facet containing $\lambda$, and $\mu$ is contained in the closure of the facet containing $\nu$. Then for any $y\in W^+$ $\fromthe\Delta(y.\mu)\cong T^\lambda_\nu T^\nu_\mu\Delta(y.\mu)$. 
\end{prop}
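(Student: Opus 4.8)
The plan is to prove the claimed isomorphism $\fromthe\Delta(y.\mu)\cong T^\lambda_\nu T^\nu_\mu\Delta(y.\mu)$ by showing that both sides are direct summands of the same object and then matching up characters (equivalently, $\Delta$-filtration multiplicities) to force them to coincide. First I would apply Proposition~\ref{tr}(2) in the step from $\mu$ to $\nu$: since $\mu$ lies in the closure of the facet of $\nu$, setting $I_\nu=\{s\in S_l\mid s.\nu=\nu\}$ and $J=\{s\in S_l\mid s.\mu=\mu\}$ we have $I_\nu\subset J$, and $T^\nu_\mu\Delta(y.\mu)$ has a $\Delta$-filtration with sections $\Delta(yx.\nu)$, each $x\in W_J/W_{I_\nu}$ occurring once. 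Then apply exactness of $T^\lambda_\nu$ together with Proposition~\ref{tr}(2) again (now in the step from $\nu$ to $\lambda$, where $I\subset I_\nu$): I obtain that $T^\lambda_\nu T^\nu_\mu\Delta(y.\mu)$ has a $\Delta$-filtration whose sections, counted with multiplicity, are exactly the $\Delta(yxz.\lambda)$ for $x$ ranging over coset representatives of $W_J/W_{I_\nu}$ and $z$ over $W_{I_\nu}/W_I$. Since these representatives can be chosen so that $xz$ runs over $W_J/W_I$ without repetition, the multiset of sections is precisely $\{\Delta(yx.\lambda)\mid x\in W_J/W_I\}$ with multiplicity one each — the same multiset as for $\fromthe\Delta(y.\mu)$ by Proposition~\ref{tr}(2) applied directly to the step $\mu\to\lambda$ (note $I\subset J$). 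Hence $\ch T^\lambda_\nu T^\nu_\mu\Delta(y.\mu)=\ch \fromthe\Delta(y.\mu)$.

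Having matched characters, I still need an actual isomorphism, not merely an equality of characters, so the second ingredient is an adjunction/unit argument. Using biadjointness of $(\tothe,\fromthe)$ at each stage, the composite $T^\lambda_\nu T^\nu_\mu$ is biadjoint to $\tothe[\mu]{\lambda}=T^\mu_\nu T^\nu_\lambda$, and likewise $\fromthe$ is biadjoint to $\tothe$. The canonical maps built from the units of adjunction give a natural transformation relating $\fromthe$ and $T^\lambda_\nu T^\nu_\mu$; concretely I would use that $\hd(\fromthe\Delta(y.\mu))\cong L(y.\lambda)$ (Proposition~\ref{tr}(2)) and that $T^\lambda_\nu T^\nu_\mu\Delta(y.\mu)$ also has simple head $L(y.\lambda)$ — the latter because $T^\lambda_\nu$ is exact and sends the quotient $T^\nu_\mu\Delta(y.\mu)\surj \text{(something with head }L(y.\nu))$ appropriately, using Proposition~\ref{tr}(3) to see $T^\lambda_\nu L(y.\nu)=L(y.\lambda)$ and that all other $\Delta(yx.\nu)$-type contributions translate to objects whose heads are killed or lie strictly below. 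A surjection $T^\lambda_\nu T^\nu_\mu\Delta(y.\mu)\surj \fromthe\Delta(y.\mu)$ (or the reverse) that exists by the universal property of $\fromthe\Delta(y.\mu)$ as the projective-type cover in the truncated category, combined with the equality of characters, forces it to be an isomorphism.

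Alternatively — and this may be the cleaner route to write down — I would avoid the head computation and instead use the fact that $\fromthe\Delta(y.\mu)$ is, up to the relevant truncation, a relative projective: in the highest weight category, the module $\fromthe\Delta(y.\mu)$ is characterized (see \cite[II.7]{J}) among $\Delta$-filtered modules with the given $\Delta$-multiplicities by a $\Hom$-vanishing/universal property with respect to $\nabla(z.\lambda)$. Both $\fromthe\Delta(y.\mu)$ and $T^\lambda_\nu T^\nu_\mu\Delta(y.\mu)$ satisfy this because translation functors take $\nabla$'s to $\nabla$'s (dual version of Proposition~\ref{tr}(1),(2)) and preserve the relevant $\Ext$-vanishing by exactness plus adjunction. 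This pins down the isomorphism type uniquely, completing the proof.

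The main obstacle I anticipate is the bookkeeping in the first paragraph: ensuring that the double quotient $W_J/W_{I_\nu}$ and $W_{I_\nu}/W_I$ genuinely assemble into $W_J/W_I$ with the correct total multiplicity (exactly one), and that the iterated $\Delta$-filtration from applying $T^\lambda_\nu$ to a $\Delta$-filtered module does not introduce cancellation or extra sections — this is where one must be careful that $y$ stays in $W^+$ at each intermediate facet and that the minimal-length coset representatives compose correctly, i.e. that the facet-closure hypothesis really does give the chain $I\subset I_\nu\subset J$. Once that combinatorial identity of multisets is secured, promoting the character equality to an isomorphism via the universal property of $\fromthe\Delta(y.\mu)$ is routine given the results already quoted from \cite{J}.
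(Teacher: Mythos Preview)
Your first step---matching the $\Delta$-filtration sections of $T^\lambda_\mu\Delta(y.\mu)$ and $T^\lambda_\nu T^\nu_\mu\Delta(y.\mu)$ via the chain $I\subset I_\nu\subset J$---is correct and is also part of the paper's argument. The gap is in the second step, which you describe as ``routine'': promoting equal characters to an isomorphism. Two $\Delta$-filtered modules with the same multiset of sections and the same simple head need not be isomorphic, so you must actually produce a map between them, and neither of your routes does so. In route (a) you write $T^\mu_\lambda=T^\mu_\nu T^\nu_\lambda$ as if known; via uniqueness of adjoints that functor identity is equivalent to the very transitivity you are proving, and the units/counits alone do not assemble into a natural transformation between $T^\lambda_\mu$ and $T^\lambda_\nu T^\nu_\mu$ without that input. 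In route (b) the ``projective-type cover in the truncated category'' is never made precise: $T^\lambda_\mu\Delta(y.\mu)$ is not the projective cover of $L(y.\lambda)$ in any evident Serre subquotient of $\mathcal C^\zeta_\lambda$, and $\Ext^1(-,\nabla)=0$ holds for \emph{every} $\Delta$-filtered module, so it cannot single out one. (Your head computation for the composite is salvageable, incidentally, via adjunction: $\Hom(T^\lambda_\nu T^\nu_\mu\Delta(y.\mu),L(z.\lambda))\cong\Hom(T^\nu_\mu\Delta(y.\mu),T^\nu_\lambda L(z.\lambda))$ vanishes unless $z=y$---but this is not enough.)

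The paper avoids constructing a map altogether by embedding both modules into a common ambient object. Since $\Delta(y.\mu)\hookrightarrow X(y.\mu)$ and translation is exact, Proposition~\ref{tr}(6) gives $T^\lambda_\mu\Delta(y.\mu)\hookrightarrow T^\lambda_\mu X(y.\mu)\cong X(yw_J.\lambda)$, and applying (6) twice gives $T^\lambda_\nu T^\nu_\mu\Delta(y.\mu)\hookrightarrow X(yw_J.\lambda)$ as well. One then checks, using the $\Delta$-multiplicities of $X(y.\mu)$ and Proposition~\ref{tr}(2), that each $\Delta(yx.\lambda)$ for $x\in W_J\cap W^I$ appears exactly once in a $\Delta$-filtration of $X(yw_J.\lambda)$. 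The proof finishes by showing that $X(yw_J.\lambda)$ has a \emph{unique} submodule with that prescribed set of $\Delta$-sections: peel off the (multiplicity-one) highest-weight section $\Delta(yw_J.\lambda)$, pass to the quotient, and repeat. Uniqueness inside a fixed ambient module is what replaces the map you were unable to construct.
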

\begin{proof}
Let $I,J$ as in \eqref{I,J}. We may assume that $y\in W^J$. 

Consider the tilting module $X(yw_J.\lambda)$. We check that both $\fromthe\Delta(y.\mu)$ and $T^\lambda_\nu T^\nu_\mu\Delta(y.\mu)$ are submodules of $X(yw_J.\lambda)$. Since $\Delta(y.\mu)$ is a submodule of the tilting module $X(y.\mu)$, by exactness of translation $\fromthewall\Delta(y.\mu)$ is a submodule of $\fromthewall X(y.\mu)$. But $\fromthewall X(y.\mu)$ is isomorphic to $X(yw_J.\lambda)$ by Proposition \ref{tr} (6). For the same reason $T^\lambda_\nu T^\nu_\mu\Delta(y.\mu)$ is a submodule of $T^\lambda_\nu T^\nu_\mu X(y.\mu)$. The latter is isomorphic to $X(yw_J.\lambda)$, applying Proposition \ref{tr} (6) twice. 

Now note that $\fromthe\Delta(y.\mu)$ and $T^\lambda_\nu T^\nu_\mu\Delta(y.\mu)$ have $\Delta$-filtrations with the same set of sections, i.e, for each $x\in W^I_J=W^I\cap W_J$ the section $\Delta(yx.\lambda)$ appears exactly once. It remains to show that there is only one submodule in $X(yw_J.\lambda)$ which has such a $\Delta$-filtration.

We first determine which standard modules appear in a $\Delta$-filtration of $X(yw_J.\lambda)$. The module $X(y.\mu)$ has a $\Delta$-filtration exactly one of whose sections is isomorphic to $\Delta(y.\mu)$. Any other $\Delta(z.\mu)$ appearing in the filtration satisfies $z<y$. 
Translating to the $\lambda$-block gives the multiplicities of all $\Delta(\lambda')$ in a $\Delta$-filtration of $\fromthewall X(y.\mu)=X(yw_J.\lambda)$ in terms of the $\Delta$-multiplicities of $X(y.\mu)$.
By Proposition \ref{tr}(2), the multiplicity of $\Delta(zx'.\lambda)$, for each $x'\in W_J\cap W^I$, in a $\Delta$-filtration of $X(yw_J.\lambda)$ is the same as the multiplicity of $\Delta(z.\mu)$ in a $\Delta$-filtration of $X(y.\mu)$. Since $\Delta(y.\mu)\not\cong\Delta(z.\mu)$ implies $zW_J\cap yW_J=\emptyset$, we have in that case $\Delta(yx'.\lambda)\not\cong\Delta(zx''.\lambda)$ for all $zx'\in zW_J\neq yW_J \ni zx''$. Therefore, each $\Delta(yx'.\lambda)$ for $x'\in W_J\cap W^I$ appears exactly once in the $\Delta$-filtration of $X(yw_J.\lambda)$.

Suppose $M, M'$ are two submodules of $X(yw_J.\lambda)$ which have $\Delta$-filtrations with the same set of sections $\{\Delta(yx.\lambda)\}_{x\in W^I_J}$. The proposition is proved if we show $M=M'$.

The weight $yw_J.\lambda$ is maximal in $M, M'$ and $X(yw_J.\lambda)$. Also, $yw_J.\lambda$ appears with multiplicity one in all three modules. Hence $M$ and $M'$ contains the unique submodule of $X(yw_J.\lambda)$ isomorphic to $\Delta(yw_J.\lambda)$. Then $M/\Delta(yw_J.\lambda)$ and $M'/\Delta(yw_J.\lambda)$ are submodules of  $X(yw_J.\lambda)/\Delta(yw_J.\lambda)$. Here, each $yw_Js.\lambda$ for $s\in J$ is maximal with multiplicity one. In this way, we can show that $M\cap M'$ has a $\Delta$-filtration with sections $\{\Delta(yx.\lambda)\}_{x\in W^I_J}$. So $M=M'$.
\end{proof}

Composing two opposite translation functors, we get an endofunctor $\fromthewall\tothewall$ on $\mathcal{C}^\zeta_\lambda$. 
In a special case where $\lambda$ is regular and $\mu$ is subregular, the functor $\fromthewall\tothewall$ is commonly called the \textit{$s$-wall crossing functor} and denoted by $\Theta_s$, where $s$ is the unique nontrivial stabilizer of $\mu$. 

Let $\lambda$ be regular, and consider the module $\fromthewall\tothewall\Delta(y.\lambda).$ By Proposition \ref{tr}.(2), there is a filtration $$\fromthewall\tothewall\Delta(y.\lambda)=V_0\supset V_1\supset \cdots \supset V_n=0$$ such that $V_i/V_{i+1}=\Delta(yx_i.\lambda)$. Then $\{x_0=e, \cdots, x_n\}=W_J/W_I$. Since 
\begin{equation}\label{ext1del}
\Ext_{U_\zeta}^1(\Delta(\nu),\Delta(\nu'))=0 \textrm{   for $\nu\not<\nu'$},
\end{equation} we can arrange the filtration in a way that $\ell(x_i)\leq \ell(x_{i+1})$ holds. Now consider the subfiltration $$\fromthe\tothe\Delta(y.\lambda)=U_0 \supset U_1\supset\cdots\supset U_N=0$$ of $\{V_i\}$ where the $i$-th section contains all $\Delta(yx.\lambda)$ with $\ell(x)=i$. Using \eqref{ext1del} again, we have $$U_i/U_{i+1}\cong \bigoplus_{\ell(x)=i, x\in W_J/W_I}\Delta (yx.\lambda).$$

The filtration $\{U_i\}$ is maximal, in some sense, among the filtrations of $\fromthewall\tothewall\Delta(y.\lambda)$ whose sections are direct sums of standard modules. To say in what sense it is so, we prove the following lemma.

\begin{lem}\label{max}
Let $\lambda\in C^-_\Z=C^-\cap X$, $\mu\in\overline{C^-_\Z}$, and $J$ be as in \eqref{I,J}. Then $\Theta_s\Delta(y.\lambda)$, whenever defined, is a subquotient of $\fromthewall\tothewall\Delta(y.\lambda)$ for any $y\in W^+,\ x\in W_J,\ s\in J$.

\end{lem}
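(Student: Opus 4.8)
The plan is to factor the wall‑crossing $\Theta_s$ through the subregular weight that defines it, and to compare the result with the singular translation $\fromthewall\tothewall$ by transporting a standard module across an exact functor. Since $\lambda$ is regular, $I=\emptyset$; and, exactly as in the proof of Proposition \ref{transitivity}, we may assume $y\in W^J$. Suppose $\Theta_s$ is defined, and fix an integral subregular weight $\nu\in\overline{C^-_\Z}$ lying on the $s$‑wall $\overline{C^-}\cap H_s$ of $C^-$, so that $\stab_{W_l}(\nu)=\{e,s\}$ and $\Theta_s=T^\lambda_\nu T^\nu_\lambda$. Write $w_0$ for the shorter of $x$ and $xs$. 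Since $y\in W^J$, $w_0\in W_J$ and $s\in J$, lengths add to give $l(yw_0s)=l(yw_0)+1$, so $yw_0$ is the minimal representative of $\{yx,yxs\}$; hence Proposition \ref{tr}(1), applied to the pair $(\lambda,\nu)$, yields $T^\nu_\lambda\Delta(yx.\lambda)=\Delta(yw_0.\nu)$, and likewise $T^\mu_\lambda\Delta(y.\lambda)=\Delta(y.\mu)$, so that $\Theta_s\Delta(yx.\lambda)=T^\lambda_\nu\Delta(yw_0.\nu)$ and $\fromthewall\tothewall\Delta(y.\lambda)=T^\lambda_\mu\Delta(y.\mu)$.

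The key point is then to bring Proposition \ref{transitivity} to bear on the triple $(\lambda,\nu,\mu)$. Because $\nu$ lies in $\overline{C^-}$, on $H_s$, and on no other hyperplane of the affine arrangement, it lies in the relative interior of the face $\overline{C^-}\cap H_s$ (a facet of the simplex $\overline{C^-}$ lies on a unique wall), so the closure of the facet of $\nu$ is exactly $\overline{C^-}\cap H_s$. Since $s\in J$ we have $s.\mu=\mu$, i.e. $\mu\in H_s$, and combined with $\mu\in\overline{C^-}$ this puts $\mu\in\overline{C^-}\cap H_s$, the closure of the facet of $\nu$; also $\nu\in\overline{C^-}$, the closure of the facet of $\lambda$. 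Thus Proposition \ref{transitivity} gives
$$\fromthewall\tothewall\Delta(y.\lambda)=T^\lambda_\mu\Delta(y.\mu)\cong T^\lambda_\nu\bigl(T^\nu_\mu\Delta(y.\mu)\bigr).$$
Now apply Proposition \ref{tr}(2) to the pair $(\nu,\mu)$, which is legitimate as $\{e,s\}\subseteq W_J$: the module $T^\nu_\mu\Delta(y.\mu)$ has a $\Delta$‑filtration whose sections are exactly the $\Delta(yw.\nu)$ with $w$ ranging over $W_J/\{e,s\}$, each with multiplicity one. In particular $\Delta(yw_0.\nu)$ occurs as a section, hence is a subquotient of $T^\nu_\mu\Delta(y.\mu)$; applying the exact functor $T^\lambda_\nu$, which sends subquotients to subquotients, $\Theta_s\Delta(yx.\lambda)=T^\lambda_\nu\Delta(yw_0.\nu)$ is a subquotient of $T^\lambda_\nu T^\nu_\mu\Delta(y.\mu)\cong\fromthewall\tothewall\Delta(y.\lambda)$, which is the assertion (the case $x=e$ giving the displayed statement).

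The routine part is the Coxeter bookkeeping: length‑additivity of the parabolic factorizations through $W^J$, through the parabolic $\{e,s\}$, and through $W_J\cap W^{\{s\}}$, together with the fact that every standard module named is genuinely indexed by a dominant weight, which follows from the identification of $W^+(\cdot)$ with the dominant weights in an orbit. The step that requires real care, and the main obstacle, is the geometric claim that the hypothesis $s\in J$ places $\mu$ in the closure of the facet of $\nu$ and not merely on the hyperplane $H_s$: this is precisely what licenses the use of Proposition \ref{transitivity}. Once that is in hand the rest is formal, resting only on exactness of the translation functors and on the $\Delta$‑filtration of $T^\nu_\mu\Delta(y.\mu)$ supplied by Proposition \ref{tr}(2).
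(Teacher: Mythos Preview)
Your proof is correct and follows essentially the same route as the paper: choose a subregular $\nu$ on the $s$-wall, observe that the relevant $\Delta(yw_0.\nu)$ is a $\Delta$-section of $T^\nu_\mu\Delta(y.\mu)$, apply the exact functor $T^\lambda_\nu$, and invoke Proposition~\ref{transitivity} to identify $T^\lambda_\nu T^\nu_\mu\Delta(y.\mu)$ with $\fromthewall\tothewall\Delta(y.\lambda)$. The paper actually proves the slightly more general Lemma~\ref{facetcrossinglem} (replacing $\{s\}$ by any $J'\subset J$) by the same argument, and its proof is terser---in particular it does not spell out, as you do, the verification that $\mu$ lies in the closure of the facet of $\nu$, which is exactly the hypothesis needed to apply Proposition~\ref{transitivity}.
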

We actually state and prove the lemma more generally. The only difficulty it adds is notational. We generalize the $s$-wall crossing functors to define the \textit{facet crossing functor} $\Theta_{J\setminus I}^I:=\fromthe\tothe$ with $I,J$ as in \eqref{I,J}. This is compatible with the wall crossing functor notation as $\Theta_s=\Theta_{\{s\}}^\emptyset$. This notation is useful here because there are many different facets in play. In the other sections we will go back to using $\fromthe\tothe$. Note that the functor $\Theta^I_{J'}$ is defined 
for $J'\subset J\setminus I$ if and only if there exists a weight $\nu$ such that $\{s\in S_l\ |\ s.\nu=\nu\}=I\cup J'$. For the special case $\Theta_s$ in Lemma \ref{max}, this is always the case for $l>h$ by Proposition \ref{subregexist}.

\begin{lem}\label{facetcrossinglem}
Let $\lambda, \mu$, $I\subset J$ as in \eqref{I,J}. For any $J'\subset J\setminus I$, $y\in W^+$, the $J'$-facet crossing module $\Theta_{J'}^I\Delta(y.\lambda)$, whenever defined, is a subquotient of $\Theta_{J\setminus I}^I\Delta(y.\lambda)=\fromthe\tothe\Delta(y.\lambda)$.
\end{lem}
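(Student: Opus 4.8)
The plan is to establish Lemma~\ref{facetcrossinglem} by reducing to the already-proved transitivity of translation functors (Proposition~\ref{transitivity}) together with the $\Delta$-filtration bookkeeping used in its proof. First I would pick a weight $\nu\in\overline{C^-_\Z}$ with $\{s\in S_l\mid s.\nu=\nu\}=I\cup J'$, which exists precisely because $\Theta^I_{J'}$ is defined; then $\nu$ lies in the closure of the facet of $\lambda$, and $\mu$ lies in the closure of the facet of $\nu$, so Proposition~\ref{transitivity} applies with this triple $(\lambda,\nu,\mu)$. Write $K=I\cup J'$, so $W_I\subset W_K\subset W_J$. The chain of functors gives $\fromthe\tothe=T^\lambda_\nu T^\nu_\mu T^\mu_\nu T^\nu_\lambda$, and the middle composite $T^\mu_\nu T^\nu_\mu$ is the identity-up-to-multiplicity endofunctor on $\mathcal C^\zeta_\nu$; more precisely, for $z\in W^+(\nu)$ one has a short exact sequence exhibiting $\Delta(z.\nu)$ as a summand in a $\Delta$-filtration of $T^\mu_\nu T^\nu_\mu\Delta(z.\nu)$ (this follows from Proposition~\ref{tr}(1)--(2) exactly as in the regular case). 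Consequently $\fromthe\tothe\Delta(y.\lambda)=T^\lambda_\nu(T^\mu_\nu T^\nu_\mu)T^\nu_\lambda\Delta(y.\lambda)$ contains $T^\lambda_\nu T^\nu_\lambda\Delta(y.\lambda)=\Theta^I_{J'}\Delta(y.\lambda)$ as a subquotient, once one checks that applying the exact functor $T^\lambda_\nu$ to a subquotient gives a subquotient, and that the copy of $\Delta(z.\nu)$ sitting inside $T^\mu_\nu T^\nu_\mu\Delta(z.\nu)$ is a genuine subquotient rather than merely a filtration section appearing with the wrong multiplicity.

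Here is the order I would carry the steps out. (1) Fix $\nu$ and the inclusions $W_I\subseteq W_K\subseteq W_J$ with $K=I\cup J'$; record that $\Theta^I_{J'}=T^\lambda_\nu T^\nu_\lambda$ up to rewriting, using $T^\nu_\lambda\Delta(yx.\lambda)=\Delta(y.\nu)$ for $x\in W_K$ from Proposition~\ref{tr}(1). (2) Prove the ``sandwich'' statement: for $z\in W^+(\nu)$, $\Delta(z.\nu)$ is a subquotient of $T^\mu_\nu T^\nu_\mu\Delta(z.\nu)$. Because $\Ext^1$ between standards vanishes upward (as in \eqref{ext1del}), $T^\mu_\nu T^\nu_\mu\Delta(z.\nu)$ admits a filtration whose sections are $\Delta(zx.\nu)$, $x\in W_J/W_K$, ordered by length of $x$; the length-zero section is exactly $\Delta(z.\nu)$, which is therefore a genuine submodule, hence a subquotient. (3) Apply Proposition~\ref{transitivity} to get $\fromthe\tothe\Delta(y.\lambda)\cong T^\lambda_\nu T^\nu_\mu T^\mu_\nu T^\nu_\lambda\Delta(y.\lambda)$. (4) Set $z.\nu=T^\nu_\lambda\Delta(y.\lambda)=\Delta(y.\nu)$ (so $z=y$ in $W^+(\nu)$); by step (2) and exactness of $T^\lambda_\nu$, applying $T^\lambda_\nu$ to the subquotient inclusion $\Delta(y.\nu)\hookrightarrow T^\mu_\nu T^\nu_\mu\Delta(y.\nu)$ shows $T^\lambda_\nu\Delta(y.\nu)=\Theta^I_{J'}\Delta(y.\lambda)$ is a subquotient of $T^\lambda_\nu T^\mu_\nu T^\nu_\mu\Delta(y.\nu)=\fromthe\tothe\Delta(y.\lambda)$. (5) Deduce Lemma~\ref{max} as the special case $J'=\{s\}$, $I=\emptyset$, noting the existence of $\nu$ is guaranteed by Proposition~\ref{subregexist} when $l>h$.

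The main obstacle, I expect, is step (2) together with the transition from "filtration section" to "subquotient" in step (4): a priori $\Delta(y.\nu)$ appears in a $\Delta$-filtration of $T^\mu_\nu T^\nu_\mu\Delta(y.\nu)$, but to conclude it is a \emph{subquotient} one needs it to occupy the bottom (or top) of a sub-filtration, and then one must verify this property is preserved under the exact functor $T^\lambda_\nu$ — applying an exact functor to a submodule inclusion $N\hookrightarrow M$ gives a submodule inclusion $T^\lambda_\nu N\hookrightarrow T^\lambda_\nu M$, and applying it to a quotient is likewise fine, so the composite subquotient relation does survive, but one should state this carefully. A secondary bookkeeping point is that in the identification $\fromthe\tothe=T^\lambda_\nu(T^\mu_\nu T^\nu_\mu)T^\nu_\lambda$ one is implicitly using that $T^\nu_\lambda$ kills the extra $W_K/W_I$ multiplicities (Proposition~\ref{tr}(3)) so that $T^\nu_\lambda\Delta(yx.\lambda)$ for $x\in W_K$ all collapse to $\Delta(y.\nu)$; this is consistent with Proposition~\ref{transitivity} but worth a line. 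Once these are pinned down, the argument is a short chain of applications of Propositions~\ref{tr}, \ref{transitivity} and the $\Ext^1$-vanishing \eqref{ext1del}.
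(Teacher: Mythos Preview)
Your approach is essentially the same as the paper's: pick $\nu$ with stabilizer $I\cup J'$, observe that $\Delta(y.\nu)$ is a subquotient of $T^\nu_\mu\Delta(y.\mu)$, apply the exact functor $T^\lambda_\nu$, and invoke Proposition~\ref{transitivity} to identify $T^\lambda_\nu T^\nu_\mu\Delta(y.\mu)$ with $\fromthe\Delta(y.\mu)$. The paper's three-line version simply skips your decomposition of the ``to the wall'' side, since $T^\mu_\lambda\Delta(y.\lambda)=\Delta(y.\mu)$ directly by Proposition~\ref{tr}(1); your step~(2) sandwich statement is then just the observation that $\Delta(y.\nu)$ occurs as a $\Delta$-section of $T^\nu_\mu\Delta(y.\mu)$.

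Two small corrections. First, several of your composites are written in the wrong order: the endofunctor on $\mathcal C^\zeta_\nu$ is $T^\nu_\mu T^\mu_\nu$, not $T^\mu_\nu T^\nu_\mu$ (the latter doesn't even apply to $\Delta(z.\nu)$), and similarly in step~(4). Second, the length-zero section $\Delta(y.\nu)$ is a \emph{quotient} of $T^\nu_\mu\Delta(y.\mu)$, not a submodule --- Proposition~\ref{tr}(2) says the head is $L(y.\nu)$ --- but this is harmless since a quotient is a subquotient and exact functors preserve that relation.
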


\begin{rem}	\label{A2Uex}
\enumerate
\item A less formal but more illustrative way to state the lemma is that the facet crossings of a standard module are realized in a deeper facet crossing (of the same standard module). 
\item We provide a simple example as another illustration. Let $R$ be type $A$,  $I=\emptyset$ and $J=\{s,t\}\subset S_l$ (i.e., $\lambda$ regular, $\mu$ subsubregular) such that $sts=tst$. Then for any $y\in W^J$, the module $\fromthewall\tothewall\Delta(y.\lambda)$ has six $\Delta$-sections. They are $\Delta(y.\lambda)$, $\Delta(ys.\lambda)$, $\Delta(yt.\lambda)$, $\Delta(yst.\lambda),$ $\Delta(yts.\lambda),$ $\Delta(ysts.\lambda)=\Delta(ytst.\lambda)$. The lemma shows that $\Theta_s\Delta(y.\lambda)=\Theta_s\Delta(ys.\lambda)$, $\Theta_t\Delta(y.\lambda)=\Theta_t\Delta(yt.\lambda)$, $\Theta_s\Delta(yt.\lambda)=\Theta_s\Delta(yts.\lambda)$, $\Theta_t\Delta(ys.\lambda)=\Theta_t\Delta(yst.\lambda)$, $\Theta_s\Delta(yst.\lambda)=\Theta_s\Delta(ysts.\lambda)$, $\Theta_t\Delta(yts.\lambda)=\Theta_t\Delta(ytst.\lambda)$ are realized in $\fromthe\tothe\Delta(y.\lambda)$ as subquotients.
\end{rem}

\begin{proof}[Proof of Lemma \ref{facetcrossinglem}]
Suppose $\Theta^I_{J'}$ is defined, that is, there is a weight $\nu$ such that $\{s\in S_l\ |\ s.\nu=\nu\}=I\cup J'$. Since $\Delta(y.\nu)$ is a subquotient of $T^\nu_\mu\Delta(y.\mu)$, $T^\lambda_\nu\Delta(y.\nu)=\Theta_{J'}\Delta(y.\lambda)$ is a subquotient of $T^\lambda_\nu T^\nu_\mu\Delta(y.\mu)$. But by Proposition \ref{transitivity}, $T^\lambda_\nu T^\nu_\mu\Delta(y.\mu)$ is isomorphic to $\fromthe\Delta(y.\mu)=\Theta_{J\setminus I}\Delta(y.\lambda)$.
\end{proof}
For the rest of the subsection we assume $l>h$ and let $\lambda$, $\mu$, $J$ as in \eqref{I,J} with $\lambda$ regular (that is, $I=\emptyset$). 
\begin{cor}
 Let $y\in W^+(\mu)$. Then $\Theta_J\Delta(y.\lambda)=\fromthe\tothe\Delta(y.\lambda)$ has a filtration each of whose sections is isomorphic to $\Theta_s\Delta(yx.\lambda)$ for some $s\in J$, $x\in W_J$.
\end{cor}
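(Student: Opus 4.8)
The plan is to combine the filtration $\{U_i\}$ of $\fromthe\tothe\Delta(y.\lambda)$ constructed just before Lemma~\ref{max} with the subquotient realization of wall crossings provided by Lemma~\ref{facetcrossinglem}. Recall that $\{U_i\}$ has sections $U_i/U_{i+1}\cong\bigoplus_{l(x)=i,\,x\in W_J}\Delta(yx.\lambda)$, where I have used $I=\emptyset$. The idea is that each graded piece of this filtration should itself be refined so that its sections become the modules $\Theta_s\Delta(yx.\lambda)$; the content of the corollary is precisely that the ``large'' standard-filtered module $\fromthe\tothe\Delta(y.\lambda)$ decomposes, up to filtration, into the smaller wall-crossing modules indexed by pairs $(x,s)$.

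First I would set up the bookkeeping: for each $x\in W_J$, choose a simple reflection $s=s(x)\in J$ with $xs<x$ when $x\neq e$ (and handle $x=e$ separately, where $\Theta_e\Delta(y.\lambda)=\Delta(y.\lambda)$ should be read as the trivial wall crossing, or alternatively pair up cosets of $\langle s\rangle$ in $W_J$). This pairs the elements of $W_J$ into sets $\{x,xs\}$, and since $\Theta_s\Delta(yx.\lambda)$ has a $\Delta$-filtration with exactly the two sections $\Delta(yx.\lambda)$ and $\Delta(yxs.\lambda)$ (by Proposition~\ref{tr}(2) applied with the subregular weight), this is exactly the right combinatorial matching to account for all $|W_J|$ standard sections of $\fromthe\tothe\Delta(y.\lambda)$. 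Next I would invoke Lemma~\ref{facetcrossinglem} (via Lemma~\ref{max}): for each such $x$, the module $\Theta_{s(x)}\Delta(yx.\lambda)$ is a subquotient of $\fromthe\tothe\Delta(y.\lambda)$. The remaining work is to show these subquotients can be simultaneously arranged into an honest filtration of the whole module — i.e.\ that the subquotient embeddings are compatible.

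To do this I would argue inside $X(yw_J.\lambda)=\fromthe X(y.\mu)$, exactly as in the proof of Proposition~\ref{transitivity}, exploiting that $\fromthe\tothe\Delta(y.\lambda)$ and all the $\Theta_s\Delta(yx.\lambda)$ sit inside this tilting module and that their $\Delta$-sections appear there with multiplicity one. Working down from the top weight $yw_J.\lambda$, one peels off submodules isomorphic to $\Delta$'s one alcove at a time and matches them against the wall-crossing pieces: the uniqueness-of-submodule-with-prescribed-$\Delta$-filtration argument from Proposition~\ref{transitivity} shows that the submodule of $\fromthe\tothe\Delta(y.\lambda)$ generated by the highest pair $\{x,xs\}$ really is $\Theta_s\Delta(yx.\lambda)$ and is a submodule (not just a subquotient), and then one passes to the quotient and induces. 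Alternatively — and perhaps more cleanly — one can induct on $|W_J|$: pick $x=w_J$ with $w_Js<w_J$, observe that $\Delta(yw_J.\lambda)\hookrightarrow\fromthe\tothe\Delta(y.\lambda)$ extends to a copy of $\Theta_s\Delta(yw_J.\lambda)$ as a submodule using $\Ext^1$-vanishing~\eqref{ext1del} among standards (the two $\Delta$-sections of $\Theta_s\Delta(yw_J.\lambda)$ are $\Delta(yw_Js.\lambda)$ and $\Delta(yw_J.\lambda)$ with $yw_Js.\lambda\uparrow yw_J.\lambda$, so the extension is in the correct direction to embed), and then quotient and apply the inductive hypothesis to a facet-crossing module for the parabolic $W_{J\setminus\{s\}}$ (or the appropriate translate) to filter the rest.

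The main obstacle is the last step: upgrading the various ``is a subquotient of'' statements from Lemma~\ref{facetcrossinglem} to a single coherent filtration, since being a subquotient individually does not automatically make the pieces stack. The safest route is the $\Ext^1$-direction argument together with the ambient tilting module $X(yw_J.\lambda)$ and the multiplicity-one property of its $\Delta$-sections, which is what forces the relevant submodules to be unique and hence to nest correctly; this is the same mechanism already used in Proposition~\ref{transitivity}, so I expect it to go through, but verifying that the quotient of $\fromthe\tothe\Delta(y.\lambda)$ by the first $\Theta_s\Delta(yw_J.\lambda)$ is again of the form handled by the inductive hypothesis (a facet-crossing module, up to the translation principle) requires a small compatibility check with Proposition~\ref{transitivity} and Proposition~\ref{tr}(2).
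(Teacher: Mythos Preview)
Your approach is substantially more complicated than the paper's, and the complication is self-inflicted: you use only the bare \emph{statement} of Lemma~\ref{max} (``$\Theta_s\Delta(yx.\lambda)$ is a subquotient'') and then struggle to upgrade a collection of subquotients to an honest filtration. The paper instead uses the \emph{construction} behind Lemma~\ref{max} (equivalently Lemma~\ref{facetcrossinglem}), which already produces a filtration directly. Concretely: fix any $s\in J$ (Proposition~\ref{subregexist} guarantees a subregular $\nu$ with $\stab_{W_l}(\nu)=\{e,s\}$), use Proposition~\ref{transitivity} to write $\fromthe\tothe\Delta(y.\lambda)\cong T^\lambda_\nu T^\nu_\mu\Delta(y.\mu)$, take a $\Delta$-filtration of $T^\nu_\mu\Delta(y.\mu)$ from Proposition~\ref{tr}(2), and apply the exact functor $T^\lambda_\nu$. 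The resulting filtration of $\fromthe\tothe\Delta(y.\lambda)$ has sections $T^\lambda_\nu\Delta(yx.\nu)=\Theta_s\Delta(yx.\lambda)$, one for each coset $x\in W_J/\{e,s\}$. No assembly of subquotients is needed; the filtration comes for free from exactness.

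Your proposed workarounds for the ``main obstacle'' are also not solid as stated. The inductive route --- quotient by $\Theta_s\Delta(yw_J.\lambda)$ and hope the quotient is a facet-crossing module for $W_{J\setminus\{s\}}$ --- does not work: removing two $\Delta$-sections from $\fromthe\tothe\Delta(y.\lambda)$ leaves $|W_J|-2$ sections, whereas $|W_{J\setminus\{s\}}|$ is typically much smaller (e.g.\ in the $A_2$ example of Remark~\ref{A2Uex}, $|W_J|-2=4$ but $|W_{J\setminus\{s\}}|=2$). The tilting-module/multiplicity-one argument could perhaps be pushed through, but it is far more delicate than Proposition~\ref{transitivity}'s use of that idea, and in any case unnecessary once you notice that applying $T^\lambda_\nu$ to a filtration yields a filtration.
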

\begin{proof}
By Proposition \ref{subregexist}, for any $s\in J$ the functor $\Theta_s$ is defined on $\mathcal C^\zeta_\lambda$. We can construct a desired filtration using Lemma \ref{max}.
\end{proof}

The following corollary explains the ``maximality'' of the filtration $U_i$.
\begin{cor}\label{maxcor}
We have for all $i$ \begin{equation}\label{maxeq}
\hd U_i= \bigoplus_{\ell(x)=i, x\in W_J} L(yx.\lambda).
\end{equation}
\end{cor}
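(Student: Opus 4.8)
The goal is the identity $\hd U_i = \bigoplus_{l(x)=i,\, x\in W_J} L(yx.\lambda)$, where $U_i$ is the $i$-th term of the subfiltration of $\fromthe\tothe\Delta(y.\lambda)$ whose $i$-th section is $\bigoplus_{l(x)=i} \Delta(yx.\lambda)$. I would proceed in two directions: first establishing that the right-hand side is a quotient of $U_i$, and then that nothing more survives in the head.

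\textbf{Step 1: surjection onto the claimed head.} Since $U_i/U_{i+1} \cong \bigoplus_{l(x)=i} \Delta(yx.\lambda)$, there is a natural surjection $U_i \surj \bigoplus_{l(x)=i}\Delta(yx.\lambda) \surj \bigoplus_{l(x)=i} L(yx.\lambda)$, so each $L(yx.\lambda)$ with $l(x)=i$ is a composition factor of $\hd U_i$ — provided these irreducibles do not get ``absorbed,'' i.e. provided no two of them coincide (they do not, as $x\mapsto yx.\lambda$ is injective on $W_J$ since $y\in W^J$) and provided each appears in the top rather than being killed. The latter I would get from the fact that $yx.\lambda$ for $l(x)=i$ is a \emph{maximal} weight among the composition factors of $U_i$: by construction all $\Delta(yx'.\lambda)$ appearing in $U_i$ have $l(x')\ge i$, and \eqref{ext1del} together with the ordering of the filtration shows $yx.\lambda$ with $l(x)=i$ is not below any $yx'.\lambda$ with $l(x')>i$ in the uparrow order compatible with Coxeter length. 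A maximal-weight composition factor of a module always appears in its head, giving one inclusion $\bigoplus_{l(x)=i} L(yx.\lambda) \hookrightarrow \hd U_i$.

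\textbf{Step 2: no other simple summands in the head.} Suppose $L(z.\lambda)$ is a summand of $\hd U_i$, so there is a surjection $U_i \surj L(z.\lambda)$. I would use Lemma \ref{max} (equivalently Lemma \ref{facetcrossinglem}): for $z = yx$ with $l(x)=j$, the wall-crossing module $\Theta_s\Delta(yx'.\lambda)$ with $xs > x$ appropriate realizes $\Delta(yx.\lambda)$ as a \emph{submodule} of a subquotient of $\fromthe\tothe\Delta(y.\lambda)$, and dually one controls where irreducibles can sit as heads. The cleanest route is: $\hd(\fromthe\tothe\Delta(y.\lambda)) = \hd(\fromthe\tothe\nabla(y.\lambda))^\vee$-type duality is not available, so instead I would argue that $U_0/U_i$ has a $\Delta$-filtration with all sections $\Delta(yx.\lambda)$, $l(x)<i$, hence $\Ext^1(L(yx.\lambda), U_0/U_i)$-considerations force any simple quotient of $U_i$ not of the form $L(yx.\lambda)$ with $l(x)=i$ to already be a quotient of $U_{i+1}$ — but $U_{i+1}$ consists of sections $\Delta(yx.\lambda)$ with $l(x)\ge i+1$, whose heads $L(yx.\lambda)$ are distinct from (and incomparable in length to) those with $l(x)=i$. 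Combined with Step 1 this pins down $\hd U_i$ exactly.

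\textbf{Main obstacle.} The delicate point is Step 2: ruling out ``extra'' simple quotients of $U_i$ coming from composition factors $L(z.\lambda)$ of the deeper terms $\Delta(yx'.\lambda)$, $l(x')>i$, that could conceivably leak into $\hd U_i$ through a non-split extension with the top sections. This is exactly the kind of statement that the preceding corollary (the filtration by $\Theta_s\Delta(yx.\lambda)$'s) and Lemma \ref{max} are designed to handle: each $\Theta_s\Delta(yx.\lambda)$ has head $L(yx.\lambda)$ (from Proposition \ref{tr}(2), $\hd\fromthe\tothe\Delta = \hd\fromthe\Delta(y.\mu)$-type reasoning) wait—more precisely $\Theta_s\Delta(\nu)$ for $\nu$ regular has simple head $L(\nu)$ when $\nu$ is in the appropriate chamber, so the filtration-by-$\Theta_s$'s of $\Theta_J\Delta(y.\lambda)$ lets me track heads section by section. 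I expect the argument to reduce to a careful bookkeeping of which $\Theta_s\Delta(yx.\lambda)$ land inside $U_i$ versus $U_{i+1}$, using that $\Theta_s$ raises length by exactly one across the relevant wall, so that $\hd U_i$ receives precisely the $L(yx.\lambda)$ with $l(x)=i$ and nothing else.
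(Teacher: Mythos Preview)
Your overall approach is the paper's: the inclusion $\supset$ comes from the surjection $U_i \twoheadrightarrow U_i/U_{i+1} \twoheadrightarrow \bigoplus_{l(x)=i} L(yx.\lambda)$, and the reverse inclusion from Lemma~\ref{max} together with the fact that $\hd\Theta_s\Delta(\lambda')$ is irreducible. Two corrections are in order.

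In Step~1 the surjection already suffices --- a surjection from $U_i$ onto a semisimple module exhibits that module as a summand of $\hd U_i$, with no further proviso needed. The added justification you give (``a maximal-weight composition factor of a module always appears in its head'') is false: any non-irreducible costandard module $\nabla(\nu)$ has $L(\nu)$ as its unique maximal-weight composition factor, and it sits in the socle, not the head. Also, $yx.\lambda$ with $l(x)=i$ is \emph{not} in general maximal among composition factors of $U_i$, since the deeper sections $\Delta(yx'.\lambda)$ with $l(x')>i$ have higher highest weights. Drop that paragraph; the surjection is the whole argument.

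In Step~2 your first attempt is circular: you push an extra simple quotient down to $U_{i+1}$ and then invoke ``$U_{i+1}$ consists of sections $\Delta(yx.\lambda)$ with $l(x)\geq i+1$, whose heads are $L(yx.\lambda)$'', but this confuses the heads of the $\Delta$-sections with $\hd U_{i+1}$ itself --- determining the latter is precisely the statement in question, shifted by one. Your second route, via Lemma~\ref{max} and the irreducibility of $\hd\Theta_s\Delta(\lambda')$, is exactly the paper's argument; the paper is no more explicit than you are, simply asserting that these two facts together prevent any extra summand in $\hd U_i$.
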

\begin{proof}
 By construction, the head of $U_i$ contains all $L(yx.\lambda)$ for $\ell(x)=i, x\in W_J$. This shows the ``$\supset$" part. Since the head of any $\Theta_s\Delta(\lambda')$ is irreducible, Lemma \ref{max} shows that it does not contain anything other than those irreducibles. This shows that the inclusion ``$\supset$'' is an equality.
\end{proof}

We know a little more than \eqref{maxeq} about $\{U_i\}$.
\begin{prop}\label{radicalses}
For each $i$, we have 
\begin{enumerate}
\item $U_i\subset \rad^i\fromthe\tothe\Delta(y.\lambda)$ 
\item $\rad U_i =\rad^{i+1}\fromthe\tothe\Delta(y.\lambda)\cap U_i$. 
\end{enumerate}
In other words, the submodule $U_i$ of $U_0=\fromthe\tothe\Delta(y.\lambda)$ has its head in the $i$-th radical layer $\rad^i \fromthe\tothe\Delta(y.\lambda)/\rad^{i+1} \fromthe\tothe\Delta(y.\lambda)$ of $\fromthe\tothe\Delta(y.\lambda)$.
\end{prop}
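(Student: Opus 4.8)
The plan is to prove both statements simultaneously by induction on $i$, exploiting the explicit construction of the filtration $\{U_i\}$ from the subfiltration of $\{V_i\}$ together with the structural input from Corollary \ref{maxcor} and Lemma \ref{max}. For $i=0$ both claims are trivial: $U_0 = \fromthe\tothe\Delta(y.\lambda) = \rad^0\fromthe\tothe\Delta(y.\lambda)$, and $\rad U_0 = \rad\fromthe\tothe\Delta(y.\lambda) = \rad^1\fromthe\tothe\Delta(y.\lambda)\cap U_0$. So suppose (1) and (2) hold for $i$; I want to deduce $U_{i+1}\subset\rad^{i+1}\fromthe\tothe\Delta(y.\lambda)$ and $\rad U_{i+1} = \rad^{i+2}\fromthe\tothe\Delta(y.\lambda)\cap U_{i+1}$.

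First I would prove (1) for $i+1$. By the inductive hypothesis (2), we have $\rad U_i = \rad^{i+1}\fromthe\tothe\Delta(y.\lambda)\cap U_i$. Since $U_{i+1}\subset U_i$, it suffices to show $U_{i+1}\subset\rad U_i$, i.e. that $U_{i+1}$ maps to zero in $U_i/\rad U_i = \hd U_i$. But by Corollary \ref{maxcor}, $\hd U_i = \bigoplus_{l(x)=i,\,x\in W_J}L(yx.\lambda)$, and by construction $U_i/U_{i+1}\cong\bigoplus_{l(x)=i,\,x\in W_J/W_I}\Delta(yx.\lambda)$ — using $I=\emptyset$ here, so $W_J/W_I = W_J$. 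The quotient map $U_i\surj U_i/U_{i+1}$ followed by the projection of each $\Delta(yx.\lambda)$ onto its head $L(yx.\lambda)$ realizes $\hd U_i$ as a quotient of $U_i/U_{i+1}$; comparing composition-factor multiplicities of the $L(yx.\lambda)$ with $l(x)=i$ in $U_i$ (each such factor appears exactly once, as the head of the unique section $\Delta(yx.\lambda)$, by Lemma \ref{max} which controls which irreducibles occur where), one sees that the surjection $U_i\surj\hd U_i$ factors through $U_i\surj U_i/U_{i+1}$. Hence $U_{i+1}\subset\ker(U_i\surj\hd U_i) = \rad U_i = \rad^{i+1}\fromthe\tothe\Delta(y.\lambda)\cap U_i\subset\rad^{i+1}\fromthe\tothe\Delta(y.\lambda)$, giving (1) for $i+1$.

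Next, (2) for $i+1$. The inclusion $\rad U_{i+1}\subset\rad^{i+2}\fromthe\tothe\Delta(y.\lambda)$ would follow by the same argument applied one step up: $\rad U_{i+1}$ is the kernel of $U_{i+1}\surj\hd U_{i+1}$, and by Corollary \ref{maxcor} (with $i+1$ in place of $i$) this head is $\bigoplus_{l(x)=i+1}L(yx.\lambda)$, which (again via the section $U_{i+1}/U_{i+2}\cong\bigoplus_{l(x)=i+1}\Delta(yx.\lambda)$) lives in the $(i+1)$-st radical layer by (1) for $i+1$; so $\rad U_{i+1}$ is killed in $\rad^{i+1}/\rad^{i+2}$ and thus lies in $\rad^{i+2}\fromthe\tothe\Delta(y.\lambda)$, hence in $\rad^{i+2}\fromthe\tothe\Delta(y.\lambda)\cap U_{i+1}$. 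For the reverse inclusion $\rad^{i+2}\fromthe\tothe\Delta(y.\lambda)\cap U_{i+1}\subset\rad U_{i+1}$: if $m\in U_{i+1}$ lies in $\rad^{i+2}\fromthe\tothe\Delta(y.\lambda)$, then in particular $m\in\rad^{i+2}\subset\rad^{i+1}\cap U_i = \rad U_i$ (by (2) for $i$), so $m$ is killed in $\hd U_i$; combined with $m\in U_{i+1}$ and the fact (from Lemma \ref{max} and Corollary \ref{maxcor}) that the image of $U_{i+1}$ in $\hd(\fromthe\tothe\Delta(y.\lambda))$-adjacent layers is exactly $\hd U_{i+1}$ with multiplicities as computed, one forces $m$ into $\rad U_{i+1}$. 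The cleanest way to package this last step is to note that $U_{i+1}/\rad U_{i+1} = \hd U_{i+1}$ embeds into $\rad^{i+1}\fromthe\tothe\Delta(y.\lambda)/\rad^{i+2}\fromthe\tothe\Delta(y.\lambda)$ by what we just proved, so $\rad^{i+2}\fromthe\tothe\Delta(y.\lambda)\cap U_{i+1}$ maps to zero in $\hd U_{i+1}$, i.e. is contained in $\rad U_{i+1}$.

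The main obstacle I anticipate is the bookkeeping in the step ``$U_{i+1}\subset\rad U_i$'': one must be sure that no irreducible $L(yx.\lambda)$ with $l(x)=i$ occurs in $U_{i+1}$ (which would spoil the claim that $U_{i+1}$ dies in $\hd U_i$), and the only tool for this is the precise multiplicity/position information coming from Lemma \ref{max} — namely that in each $\Theta_s\Delta(yx.\lambda)$ the head $L(yx\cdot(\text{shorter})\ .\lambda)$-type simples sit in controlled radical layers — together with the arrangement $l(x_i)\le l(x_{i+1})$ of the original $\Delta$-filtration $\{V_i\}$ forced by \eqref{ext1del}. Making ``each $L(yx.\lambda)$ with $l(x)=i$ appears exactly once in $U_i$, and that occurrence is in the head'' fully rigorous is the crux; once that is in hand the radical-layer statements are a formal consequence of Corollary \ref{maxcor}. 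An alternative, perhaps slicker, route would be to characterize $\rad^j\fromthe\tothe\Delta(y.\lambda)$ directly via the wall-crossing combinatorics and identify $U_i$ as the submodule generated by the weight spaces $yx.\lambda$ with $l(x)\ge i$; but I expect the inductive argument above to be the most economical given what has already been established.
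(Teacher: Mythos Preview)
Your induction scheme and your arguments for (1) and for the easy inclusion $\rad U_{i+1}\subset\rad^{i+2}\fromthe\tothe\Delta(y.\lambda)\cap U_{i+1}$ in (2) are correct and coincide with the paper's. The gap is the reverse inclusion $\rad^{i+2}\fromthe\tothe\Delta(y.\lambda)\cap U_{i+1}\subset\rad U_{i+1}$. Your ``cleanest packaging'' asserts that $\hd U_{i+1}$ \emph{embeds} into $\rad^{i+1}/\rad^{i+2}$, but that injectivity is exactly equivalent to the inclusion in question: from (1) for $i+1$ and the easy half of (2) you only obtain a \emph{map} $\hd U_{i+1}\to\rad^{i+1}/\rad^{i+2}$, and its kernel is $(\rad^{i+2}\cap U_{i+1})/\rad U_{i+1}$, precisely what must be shown to vanish. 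Your earlier attempt via ``$m\in\rad^{i+2}\subset\rad U_i$'' is empty as well, since you already have $U_{i+1}\subset\rad U_i$, so $m\in\rad U_i$ places no new constraint on $m\in U_{i+1}$.

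You have also located the obstacle in the wrong place. The step $U_{i+1}\subset\rad U_i$ is genuinely immediate from Corollary~\ref{maxcor}, because $\hd U_i=\bigoplus_{l(x)=i}L(yx.\lambda)=\hd(U_i/U_{i+1})$; your proposed justification ``each $L(yx.\lambda)$ with $l(x)=i$ appears exactly once in $U_i$'' is in general false (such an $L$ may recur inside $\rad\Delta(yx'.\lambda)$ for $l(x')>i$) and is not what is needed. The real work is the hard half of (2), and the paper does not extract it from Corollary~\ref{maxcor} alone. It argues by contradiction: if the projection $f:U_i\twoheadrightarrow\Delta(yx.\lambda)$ (with $l(x)=i$) remains surjective on $U_i\cap\rad^{i+1}$, pick $s\in J$ with $xs<x$, use Lemma~\ref{max} to extend $f$ to a surjection $g:V_{j-1}\twoheadrightarrow\Theta_s\Delta(yx.\lambda)$ with $V_{j-1}/U_i\cong\Delta(yxs.\lambda)$, and compose with the map $h:\Theta_s\Delta(yx.\lambda)\twoheadrightarrow N$ onto the nontrivial extension of $L(yxs.\lambda)$ by $L(yx.\lambda)$ supplied by~\eqref{deltaext}. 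The hypothesis then forces $h\circ g$ to hit $N/L(yx.\lambda)\cong L(yxs.\lambda)$ from both $V_{j-1}\cap\rad^i$ and $V_{j-1}\cap\rad^{i+1}$, a contradiction. The ingredients missing from your proposal are therefore Lemma~\ref{max} and the extension isomorphisms~\eqref{deltaext}; the radical-layer statement does not follow formally from Corollary~\ref{maxcor} and induction.
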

\begin{proof}
This is clear by Corollary \ref{maxcor} and the fact that the $\Delta$-sections in $\fromthe\tothe\Delta(y.\lambda)$ extends at their heads, that is, \begin{align}\label{deltaext}
\begin{split}
\Ext_{U_\zeta}^1(\Delta(yxs.\lambda),\Delta(yx.\lambda))&\xleftarrow{\cong}\Ext_{U_\zeta}^1(L(yxs.\lambda),\Delta(yx.\lambda))\\
&\xrightarrow{\cong}\Ext_{U_\zeta}^1(L(yxs.\lambda),L(yx.\lambda)),
\end{split}
\end{align} where $s\in J$, $xs<x\in W_J$. Here the first isomorphism is induced by the nonzero map $\Delta(yxs.\lambda)\to L(yxs.\lambda)$ and is a consequence of the Lusztig character formula. See \cite[Theorem 4.3]{cps1}. The second isomorphism is induced by the nonzero map $\Delta(yx.\lambda)\to L(yx.\lambda)$ and is a general fact, which also tells us that the $\Ext$ are one dimensional. See for example \cite[II.7.19 (d)]{J}. Jantzen's proof for $G$-modules works the same for $U_\zeta$-modules.

We provide, nevertheless, a more formal proof. We prove (1), (2) together by induction on $i$. So suppose we have (1), (2) for $i-1$. By Corollary \ref{maxcor}, we have $U_i\subset \rad U_{i-1}$. And induction hypothesis $U_{i-1} \subset \rad^{i}\fromthe\tothe\Delta(y.\lambda)$ implies $\rad U_{i-1} \subset \rad^{i+1}\fromthe\tothe\Delta(y.\lambda)$. Thus (1) holds for $U_i$. The inclusion $\rad U_i\subset \rad^{i+1}\fromthe\tothe\Delta(y.\lambda)\cap U_i$ in (2) now follows from $U_i\subset \rad^{i}\fromthe\tothe\Delta(y.\lambda)$. 

For the other inclusion in (2), suppose for contradiction that $\rad U_i\not\supset \rad^{i+1}\fromthe\tothe\Delta(y.\lambda)\cap U_i$. This means that there is a surjective map $f:U_i\to \Delta(yx.\lambda)$ for some $x\in W_J$
whose restriction to $U_i\cap \rad^{i+1}\fromthe\tothe\Delta(y.\lambda)$ is still surjective. We call the restriction $f'$. Now recall the $\Delta$-filtration $\{V_j\}$ of $\fromthe\tothe\Delta(y.\lambda)$. Take $j$ to be such that $V_j=U_i$. Pick $s\in J$ with $xs<x$. We may assume that (switching the order of the filtration if necessary) there is a short exact sequence $$0\to U_i=V_j\to V_{j-1}\to \Delta(yxs.\lambda)\to 0,$$ and by Lemma \ref{max} there is a surjective map $g: V_{j-1}\to \Theta_s\Delta(yx.\lambda)$ whose restriction to $U_i$ is the map $f$. By \eqref{deltaext}, there is a map $h:\Theta_s\Delta(yx.\lambda)\twoheadrightarrow N$, where $N$ represents a nontrivial element in $\Ext_{U_\zeta}^1(L(yxs.\lambda),L(yx.\lambda))$, and the restriction of $h$ to the submodule $\Delta(yx.\lambda)\subset\Theta_s\Delta(yx.\lambda)$ has image (isomorphic to) $L(yx.\lambda)$. Thus $h\circ g$ is surjective and $h\circ f$, $h\circ f'$ have image $L(yx.\lambda)\subset N$. But this implies that the map $h\circ g$ induces the following two surjective maps
$$V_{j-1}\cap\rad^i\fromthe\tothe\Delta(y.\lambda)\to N/L(yx.\lambda)$$ and $$V_{j-1}\cap\rad^{i+1}\fromthe\tothe\Delta(y.\lambda)\to N/L(yx.\lambda),$$ which is a contradiction. This proves (2) for $U_i$ and completes the induction step.
\end{proof}

\subsection{Affine Kac-Moody Lie algebras and the Kazhdan-Lusztig correspondence}\label{sskl}

We quote Tanisaki's summary in \cite{Tani} of Kazhdan-Lusztig's work and refer the reader to the references therein. Let $\widehat{\mathfrak g}$ be the affine Kac-Moody algebra associated to $R$. And let $\widetilde{\fg}=[\widehat{\fg},\widehat{\fg}]$. Consider $\mathcal O_{k}$, the category $\mathcal O$ for $\widetilde{\mathfrak g}$ at the level $k$. Let $D$ be 
$1$ for type $A_n,D_n,E_n$; $2$ for type $B_n,C_n,F_4$; $3$ for type $G_2$. Let $g$ be the dual Coxeter number, i.e., $g-1$ is the sum of all coefficients of the highest short root. The KL functor $$\mathscr{F}_l:\mathcal O_{-l/2D-g}\to U_\zeta\text{-mod}$$ was defined by Kazhdan and Lusztig in \cite{KL12,KL3,KL4}.
It is often an equivalence of categories. In that case, $\mathscr{F}_l$ maps standard, costandard, irreducible modules to the standard, costandard, irreducible modules of the same index \cite[Theorem 7.1]{Tani}. (It is explained in \cite[\S 6]{Tani} how the modules in $\cO_k$ are indexed by the same highest weights as in $U_\zeta$-mod. The weight $\lambda'$ for $U_\zeta$ is identified with the weight $\lambda'+k\chi$, where $\chi$ is the dual of the central element in $\widetilde{\mathfrak{g}}$. See also \cite{psspecht}. The affine Weyl groups are also different though isomorphic \cite[footnote 11]{psspecht}. )

The rest of the paper usually assumes $\mathscr F_l$ to be an equivalence. The following terminology will be useful.
\begin{defn}
A positive integer $l$ is \textit{KL-good}, for a fixed root system $R$, if the KL functor $\mathscr F_l$ is an equivalence of categories.  
\end{defn}

Some known conditions for $l$ to be KL-good are found 
in \cite{Tani}. For type $A_n$, there is no restriction. For other simply laced cases, $l$ is KL-good if it is greater than or equal to $3$ for $D_n$, $14$ for $E_6$, 20 for $E_7$, and 32 for $E_8$. In non-simply laced cases, also $l$ is KL-good above a bound depending on the type, but they are not known. 


\subsection{Kazhdan-Lusztig theory in regular blocks}\label{ssrkl}
Let $l$ be KL-good for the root system $R$. A consequence of \S \ref{sskl} is the dimension formula for certain coholomogy in a regular block.

Let $P_{x,y}\in \Z[t,t^{-1}]$ be the Kazhdan-Lusztig polynomial defined for each $x,y\in W_l$. They are in fact in $\Z[t^2]$. Take $\lambda\in C^-_\Z$. Then we have
\begin{equation}\label{reghLCF}
\sum_{n=0}^\infty \dim \Ext^n_{U_\zeta}(\Delta(y.\lambda),L(w.\lambda))t^n=t^{\ell( w)-\ell( y)}\bar{P}_{y,w}
\end{equation}
for all $ y, w\in W^+(\lambda)(=W^+_l$, since $\lambda$ is regular). The bar on the polynomial is the automorphism on $\Z[t,t^{-1}]$ that maps $t$ to $t^{-1}$. 

The formula \eqref{reghLCF} follows from the Lusztig character formula by a chain of equivalent conditions \cite[II.C]{J}, independently to the KL-good assumption. Recall that the Lusztig character formula says
\begin{equation}\label{lcf}
\operatorname{ch} L(w.\lambda)= \sum (-1)^{\ell( w)-\ell( y)} P_{ y,  w}(-1) \operatorname{ch}\Delta( y. \lambda),
\end{equation}
 where the sum is over $y\in W^+(\lambda)$. Since the character of $\Delta( y. \lambda)$ is given by the Weyl character formula, this really gives the character of the irreducible. The Lusztig character formula is proved by Kazhdan-Lusztig \cite{KL} 
and Kashiwara-Tanisaki \cite{KTaffine1,KTaffine2} on the affine Lie algebra side (which carries over to the quantum groups by the Kazhdan-Lusztig correspondence), and is extended to all $l>h$ by Andersen-Jantzen-Soergel \cite{AJS} on the quantum side. See \cite[II.H.12]{J} for details and more references.

\section{Grading and parity vanishing}
This section is devoted to proving some lemmas in a more general setting of graded and ungraded highest weight categories and their derived categories. 

\subsection{Parity vanishing}\label{parity}

Let $\D$ be a triangulated category. 

\begin{defn}
Let $\A,\B$ be classes of objects in $\D$. 
\enumerate
\item We say $\A$ is \textit{(left) $\B$-even} (respectively, \textit{$\B$-odd}) if $\Hom^n_\D(X,Y)=0$ for all odd (resp., even) $n$ and all $X\in\A,Y\in\B$. Then $\A$ is said to have \textit{(left) $\B$-parity} if it is either (left) $\B$-even or $\B$-odd. 
\item We say $\A$ is \textit{right $\B$-even} (resp., \textit{$\B$-odd}) if $\Hom^n_\D(Y,X)=0$ for all odd (resp., even) $n$ and all $X\in\A,Y\in\B$. Then $\A$ is said to have \textit{right $\B$-parity} if it is either right $\B$-even or right $\B$-odd.
\end{defn}

Note that $\A$ is $\B$-even if and only if $\B$ is right $\A$-even. In case $\A=\{X\}$, $\B=\{Y\}$, we simply say that $X$ is \textit{$Y$-even} if $\Hom^{n}_\D(X,Y)=0$ for all odd $n$.

\begin{prop} \label{2/3}
Let
$$ X'\to X\to X''\to $$
be a distinguished triangle in $\D$.
If $X'$ and $X''$ are $Y$-even, then $X$ is $Y$-even. If $X'$ and $X''$ are $Y$-odd, then $X$ is $Y$-odd. The same is true for right $Y$-parity. 
\end{prop}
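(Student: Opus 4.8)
The plan is to apply the long exact sequence in $\Hom_\D(-,Y)$ associated to the distinguished triangle $X'\to X\to X''\to X'[1]$. This is a standard cohomological functor argument: for each $n$ we have an exact sequence
\begin{equation*}
\Hom^n_\D(X'',Y)\to \Hom^n_\D(X,Y)\to \Hom^n_\D(X',Y)\to \Hom^{n+1}_\D(X'',Y).
\end{equation*}
Wait, I need to be careful about variance. The triangle is covariant in the first slot after rotation, so applying the contravariant functor $\Hom_\D(-,Y)$ to $X'\to X\to X''\xrightarrow{+1}$ gives a long exact sequence, and the shift in $X''$ contributes a degree shift. Concretely, $\Hom^n_\D(X'',Y)\to\Hom^n_\D(X,Y)\to\Hom^n_\D(X',Y)\to\Hom^{n+1}_\D(X'',Y)\to\cdots$, where $\Hom^n_\D(-,-):=\Hom_\D(-,-[n])$.

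First I would fix $n$ odd and assume $X',X''$ are $Y$-even, so $\Hom^n_\D(X',Y)=0$ and $\Hom^n_\D(X'',Y)=0$. The exactness of the middle of the sequence then forces $\Hom^n_\D(X,Y)=0$, since it is sandwiched between two zero groups. This gives that $X$ is $Y$-even. The $Y$-odd case is verbatim the same with the roles of even and odd $n$ swapped: fix $n$ even, use vanishing of $\Hom^n_\D(X',Y)$ and $\Hom^n_\D(X'',Y)$, conclude $\Hom^n_\D(X,Y)=0$. For the right-parity statement one applies instead the covariant cohomological functor $\Hom_\D(Y,-)$ to the same triangle, obtaining $\Hom^n_\D(Y,X')\to\Hom^n_\D(Y,X)\to\Hom^n_\D(Y,X'')$, and runs the identical sandwiching argument.

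There is essentially no obstacle here; the only point requiring a moment's care is bookkeeping the degree shift coming from the connecting morphism (the $[1]$ on $X''$, equivalently on $X'$ after rotating the triangle the other way), to make sure that in the $Y$-even hypothesis it is indeed $\Hom^n_\D(X',Y)$ and $\Hom^n_\D(X'',Y)$ for the \emph{same} odd $n$ that flank $\Hom^n_\D(X,Y)$ — which it is, because both neighbours in the four-term exact sequence displayed above sit in degrees $n$ and $n$ (the shift only appears two steps away, at degree $n+1$ or $n-1$, which is even and thus also killed, though we do not even need that). So the whole argument is three lines per case, and I would simply write it out for the left-even case and note the others are identical.
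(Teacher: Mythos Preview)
Your proposal is correct and is precisely the approach the paper takes: the paper's proof consists of the single sentence ``This is obvious applying $\Hom(-,Y)$ and $\Hom(Y,-)$ to the distinguished triangle,'' and you have simply unpacked that sentence. Your bookkeeping of the degree shifts is accurate and more explicit than the paper bothers to be.
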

\begin{proof}
This is obvious applying $\Hom( - ,Y)$ and $\Hom(Y, - )$ to the distinguished triangle.
\end{proof}

\begin{defn}
Let $\A$ be a class of objects in $\D$. We define the \textit{even closure} of $\A$ as $$^\se\A:=\{X\in\D\ |\ X \textrm{ is } \A\textrm{-even}\}.$$
Similarly we define the \textit{right even closure} as
$$\A^\se:=\{X\in\D|\ \A \textrm{ is }  X\textrm{-even}\}.$$
For an object $Y\in\D$, we set $^\se Y:=\ ^\se\{Y\}$ and $Y^\se:=\{Y\}^\se$.
\end{defn}
\begin{rem}
We identify a class $\A$ with the full subcategory of $\D$ with objects in $\A$. Though $^\se -$, $-^\se$ are not functors, their images are to be seen as full subcategories. By definition the closures are strict subcategories (i.e., a subcategory such that all objects isomorphic to one of its object belong to it) that contains $0$. By Proposition \ref{2/3}, they are also closed under extension.  
\end{rem}

\begin{prop}\label{basic}
Let $\A\subset\B$ be classes of objects in $\D$. We have
\begin{enumerate}
\item $\A^\se\supset\B^\se$. 
\smallskip
\item $\D^\se=0$, $0^\se=\D$.
\smallskip
\item $(^\se\A)^\se\supset \A$. 
\smallskip
\item $^\se((^\se\A)^\se)=^\se\!\!\A$.
\end{enumerate}
\smallskip

The same relations hold for the right closure.
\end{prop}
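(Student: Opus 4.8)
The plan is to recognize the two operations ${}^\se(-)$ and $(-)^\se$ as an antitone Galois connection on the collection of classes of objects of $\D$ ordered by inclusion, and then to read off all four items (together with their right-closure analogues) as the standard formal consequences of such a connection. The only fact to establish by hand is the defining adjunction: for arbitrary classes $\A,\B$ of objects of $\D$,
$$\B\subseteq{}^\se\A\quad\Longleftrightarrow\quad \Hom^n_\D(X,Y)=0\ \text{ for all }X\in\B,\ Y\in\A,\ \text{ and odd }n\quad\Longleftrightarrow\quad \A\subseteq\B^\se .$$
This is immediate on unwinding the definitions, since ``$X$ is $\A$-even'' and ``$\B$ is $Y$-even'' both just assert the displayed middle condition, for the appropriate ranges of $X$ and $Y$. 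I would also record the two monotonicity statements that fall out of the middle formula: $\A\subseteq\B$ implies ${}^\se\B\subseteq{}^\se\A$ and $\B^\se\subseteq\A^\se$ (one restricts the vanishing to a smaller index class).

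From here each item is a one-line deduction. Item (1) is the second of these monotonicity statements. Item (3) is the unit of the adjunction: set $\B={}^\se\A$ in the equivalence; then $\B\subseteq{}^\se\A$ holds tautologically, so $\A\subseteq\B^\se=(^\se\A)^\se$. Item (4) is the triangle identity: applying the antitone operation ${}^\se(-)$ to the inclusion $\A\subseteq(^\se\A)^\se$ of item (3) yields ${}^\se\big((^\se\A)^\se\big)\subseteq{}^\se\A$, while the right-handed form of item (3) --- namely ${}^\se(\B^\se)\supseteq\B$, which is proved identically --- applied to $\B={}^\se\A$ yields the reverse inclusion, hence equality. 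Because the middle condition in the displayed equivalence is manifestly symmetric in its two sides, the same computations performed on the other side give the right-closure versions asserted at the end of the statement.

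The only portion that is not pure order-theoretic formalism is item (2), and even there the work is negligible. For $0^\se=\D$: the condition ``$\{0\}$ is $X$-even'' is vacuous, since $\Hom_\D(0,X)=0$ for every $X$, so every object lies in $0^\se$. For $\D^\se=0$: if $X\in\D^\se$, then, since $X[1]\in\D$, the defining condition forces $\Hom^1_\D(X[1],X)=0$; but $\Hom^1_\D(X[1],X)=\Hom_\D(X[1],X[1])=\End_\D(X)$, so $\id_X=0$ and $X$ is a zero object. I expect the main --- indeed essentially the only --- point requiring a moment's thought to be this last step: the trick of feeding $X[1]$ back against $X$ using the shift functor, together with the elementary fact that $\id_X=0$ forces $X\cong 0$ in an additive category. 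Everything else is bookkeeping with the Galois connection, and I would present items (1)--(4) in exactly that order, deriving (4) from (1) and (3).
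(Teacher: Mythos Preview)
Your proposal is correct and essentially matches the paper's own argument, which simply asserts that (1), (2), (3) are ``immediate from the definition'' and that (4) ``follows from (3).'' Your Galois-connection framing is more explicit and better organized than what the paper provides, but the underlying logic is the same: antitone monotonicity gives (1), the unit gives (3), and combining (3) with its dual and (1) gives (4). The one step that is not pure bookkeeping---feeding $X[1]$ back against $X$ to kill the identity in item (2)---is precisely what the paper leaves implicit under ``immediate''; a minor quibble is that $\Hom_\D(X[1],X[1])$ is literally $\End_\D(X[1])$ rather than $\End_\D(X)$, but these are canonically isomorphic via the shift autoequivalence, so your conclusion $\id_X=0$ stands.
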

\begin{proof}
(1), (2), (3) are immediate from the definition, and (4) follows from (1) and (3).
\end{proof}
It is not true in general $^\se(\A^\se)=(^\se\A)^\se$. An easy example is found when $\D$ a derived category of a highest weight category: Take $\A$ to consist of a single standard object.

\smallskip

The proof of the following proposition is left to the reader.

\begin{prop}\label{adj}
Let $\D$, $\D'$ be triangulated categories and $\A$ be a class of objects in $\D$, $\B$ be a class of objects in $\D'$. Let $L:\D\to\D'$ be a functor and $R:\D'\to\D$ be its right adjoint. Then
\enumerate
\item $(L\A)^\se= R^{-1}(\A^\se)$.
\item $^\se (R\B)= L^{-1}(^\se\B)$.
\end{prop}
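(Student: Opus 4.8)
The plan is to unwind each of the two identities into a statement about vanishing of graded $\Hom$-groups and then match the two sides via the adjunction isomorphism, keeping track of its compatibility with the shift functor. Recall that $L\A$ (resp. $R\B$) denotes the class of objects $\{LA : A\in\A\}$ in $\D'$ (resp. $\{RB : B\in\B\}$ in $\D$), and that by definition $\A^\se$ consists of the objects receiving only even maps from $\A$, while $^\se\A$ consists of the objects emitting only even maps to $\A$.

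For part (1), fix $Y\in\D'$. By definition, $Y\in(L\A)^\se$ exactly when $\Hom^n_{\D'}(LA,Y)=0$ for every odd $n$ and every $A\in\A$, whereas $Y\in R^{-1}(\A^\se)$ means $RY\in\A^\se$, i.e. $\Hom^n_\D(A,RY)=0$ for every odd $n$ and every $A\in\A$. So it suffices to exhibit, for each $A\in\A$ and each $n$, a natural isomorphism $\Hom^n_{\D'}(LA,Y)\cong\Hom^n_\D(A,RY)$. This is the adjunction $(L,R)$ in degree $n$: writing $\Hom^n_{\D'}(LA,Y)=\Hom_{\D'}(LA,Y[n])$ and using that $R$ commutes with the shift (the standard fact that a functor between triangulated categories with an exact adjoint is itself exact, applied here to $L$ and $R$), one gets $\Hom_{\D'}(LA,Y[n])\cong\Hom_\D(A,R(Y[n]))\cong\Hom_\D(A,(RY)[n])=\Hom^n_\D(A,RY)$. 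Letting $n$ range over the odd integers and $A$ over $\A$ turns this into the equivalence $Y\in(L\A)^\se\iff Y\in R^{-1}(\A^\se)$, which is (1).

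Part (2) is entirely parallel with source and target interchanged: fix $X\in\D$; then $X\in{}^\se(R\B)$ means $\Hom^n_\D(X,RB)=0$ for all odd $n$ and all $B\in\B$, while $X\in L^{-1}({}^\se\B)$ means $LX\in{}^\se\B$, i.e. $\Hom^n_{\D'}(LX,B)=0$ for all odd $n$ and all $B\in\B$. The same adjunction identity, now read as $\Hom^n_{\D'}(LX,B)\cong\Hom^n_\D(X,RB)$, converts one condition into the other. I do not expect any real obstacle here: the only point needing a moment's care is the compatibility of the adjunction isomorphism with the shift (equivalently, that $L$ and $R$ are exact), and once the degree bookkeeping is set up the statement is a direct unwinding of the definitions of $(-)^\se$ and $^\se(-)$ together with functoriality of $L$ and $R$.
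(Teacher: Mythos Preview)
Your proof is correct and is precisely the routine verification the paper intends: the paper does not give a proof at all, stating only that ``the proof of the following proposition is left to the reader.'' Your one caveat---that $L$ and $R$ must commute with the shift so that the adjunction isomorphism respects the grading on $\Hom$---is exactly the implicit hypothesis needed (and is satisfied in the paper's applications, where $L$ and $R$ are exact translation functors).
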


\subsection{Parity vanishing in a highest weight category}\label{e}

Let $\mathcal C$ be a highest weight category with an interval finite poset $\Lambda$ of weights. It has standard objects $\Delta(\lambda)$, costandard objects $\nabla(\lambda)$, irreducible objects $L(\lambda)$ for $\lambda\in \Lambda$. We sometimes call the objects in $\mathcal C$ modules. Let us also assume that $\mathcal C$ is over an algebraically closed field so that $\End(L(\lambda))$ is one dimensional for all $\lambda \in\Lambda$. Take the bounded derived category $\mathcal{D}^b(\mathcal{C})$. An object in $\mathcal C$ is identified via the obvious inclusion $\mathcal{C}\to\mathcal{D}^b(\mathcal{C})$ with an object in $\mathcal{D}^b(\mathcal{C})$ concentrated in degree $0$. Note that for $X,Y\in\mathcal C$, we have $\Ext^n_\mathcal{C}(X,Y)=\Hom_{\mathcal{D}^b(\mathcal{C})}(X,Y[n])$. We omit the subscripts and use the notation $\Hom^n(-,-)=\Hom_{\mathcal{D}^b(\mathcal{C})}(-,-[n])$.

We further assume that the set $\Lambda$ is equipped with a length function $l:\Lambda\to\mathbb Z$. Set $\E_0$ to be the full subcategory of $\mathcal{D}^b(\mathcal{C})$ whose objects are the direct sums of $\nabla(\lambda)[\ell(\lambda)+2m]$ for $\lambda\in\Lambda$, $m\in\Z$. Then $\E_i$ is defined inductively as the full subcategory of $\mathcal{D}^b(\mathcal{C})$ such that 
$$X\in\E_i \Leftrightarrow \text{there is a distinguished triangle   } X'\to X\to X''\to \text{ with } X'\in\E_{i-1}, X''\in \E_0 .$$

Set $\E$ to be the union $\bigcup_i\E_i$. This is by construction a subcategory of $\E^R$ defined in \cite{cps1}, whose defining condition is 
$$X\in\E_i \Leftrightarrow \text{there is a distinguished triangle   } X'\to X\to X''\to \text{ with } X', X''\in \E^R_{i-1} ,$$ with $\E_0=\E^R_0$. 
In fact, it is implicit in (the proof of) the recognition theorem \cite[(2.4) Theorem]{cps1} that $\E^R=\E$. We make it explicit.

\begin{prop}\label{recog}
Let $\A$ be a class of objects in $\mathcal{D}^b(\mathcal{C})$. Then the following conditions are equivalent.\enumerate
 \item $\A\subset \E^R$.
 \item $\A\subset \E$.
\item For each $X\in \A$, we have $\Hom^n(\Delta(\lambda),X)=0$ for all $\lambda\in\Lambda$ and all integers $n\not\equiv \ell(\lambda)$ mod 2. 
\end{prop}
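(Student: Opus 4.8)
The plan is to prove the cycle of implications $(1)\Rightarrow(3)\Rightarrow(2)\Rightarrow(1)$, where $(2)\Rightarrow(1)$ is immediate from the definitions since $\E\subset\E^R$ by construction. For $(3)\Rightarrow(2)$, I would show by descending induction on the poset $\Lambda$ that any object $X$ satisfying the parity-vanishing condition on all standard objects lies in $\E$. The inductive engine is the standard highest-weight-category fact that a complex whose cohomology has composition factors only among a coideal (upward-closed subset) of $\Lambda$ can be "peeled": using the maximal weight $\lambda$ appearing, there is a distinguished triangle expressing $X$ in terms of a direct sum of shifts of $\nabla(\lambda)$ (governed by $\Hom^\bullet(\Delta(\lambda),X)$, which by adjunction/BGG-reciprocity-type arguments computes the $\nabla(\lambda)$-isotypic part) and a remainder supported on strictly smaller weights. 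The parity hypothesis forces the shifts of $\nabla(\lambda)$ occurring to have the form $l(\lambda)+2m$, placing the first term in $\E_0$, and one checks the remainder again satisfies condition (3), so it lies in $\E$ by induction; then the triangle puts $X$ in $\E$.

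For $(1)\Rightarrow(3)$, I would argue that membership in $\E^R$ forces the required $\Hom^n(\Delta(\lambda),-)$ vanishing. By Proposition \ref{2/3} it suffices to check this for the generators $\nabla(\mu)[l(\mu)+2m]$ of $\E_0$ and then propagate through the defining triangles of $\E^R$. For the generators one uses $\Hom^n(\Delta(\lambda),\nabla(\mu)[l(\mu)+2m]) = \Ext^{n - l(\mu) - 2m}(\Delta(\lambda),\nabla(\mu))$, which vanishes unless $\lambda = \mu$ and $n - l(\mu) - 2m = 0$ (the orthogonality of standards and costandards in a highest weight category: $\Ext^k(\Delta(\lambda),\nabla(\mu)) = 0$ for $k>0$ or $\lambda\neq\mu$, and is the base field for $k=0$, $\lambda=\mu$). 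In every nonvanishing case $n = l(\mu) + 2m \equiv l(\mu) = l(\lambda)$ mod $2$, which is exactly condition (3). Since the condition in (3) is closed under the operation "there is a triangle $X'\to X\to X''$ with $X',X''$ satisfying it" — again by Proposition \ref{2/3} applied to $\Hom^\bullet(\Delta(\lambda),-)$ — it passes up the filtration $\E^R_0\subset\E^R_1\subset\cdots$, giving (3) for all of $\E^R$.

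The main obstacle is the "peeling" step in $(3)\Rightarrow(2)$: making precise that the parity condition on standards exactly controls which shifts of the top costandard split off, and that the remainder still satisfies (3). Concretely, one needs that for $X$ with cohomology supported on a coideal and $\lambda$ maximal therein, the graded vector space $\Hom^\bullet(\Delta(\lambda),X)$ is finite-dimensional and that $X$ fits in a triangle $X\to X''\to \bigoplus_j \nabla(\lambda)[a_j+1]\to$ with $X''$ supported on the complement of $\lambda$ and $\{a_j\}$ read off from $\Hom^\bullet(\Delta(\lambda),X)$; this is essentially the content of (the proof of) the recognition theorem \cite[(2.4) Theorem]{cps1}, so I would cite that machinery rather than redo it, and simply track the parity bookkeeping: $\Hom^{a_j}(\Delta(\lambda),X)\neq 0$ forces $a_j\equiv l(\lambda)$ mod $2$, so the split-off term is a sum of $\nabla(\lambda)[l(\lambda)+2m]$'s, i.e. lies in $\E_0$, and $X''$ inherits condition (3) because $\Hom^\bullet(\Delta(\mu),\nabla(\lambda)[a_j])$ vanishes for $\mu\neq\lambda$ while for $\mu=\lambda$ it lies in the correct parity. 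Everything else is routine long-exact-sequence chasing.
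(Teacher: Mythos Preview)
Your approach is essentially the paper's: the implications $(2)\Rightarrow(1)\Rightarrow(3)$ are declared clear there too, and the substantive step $(3)\Rightarrow(2)$ is exactly the peeling argument from \cite[(2.4)]{cps1}, which the paper carries out one $\nabla$ at a time (picking a maximal weight in $H^\bullet(Y_i)$ and mapping to $\nabla(\lambda_i)[n_i]$) while you batch all copies of the top costandard at once. One small slip in your write-up: for the triangle to match the inductive definition of $\E_i$ (which requires $X'\in\E_{i-1}$ on the left and $X''\in\E_0$ on the right), it must read $X'\to X\to\bigoplus_j\nabla(\lambda)[a_j]\to$ with the remainder $X'$ on the \emph{left} --- the map one actually constructs (via the universal property of $\nabla$, or equivalently the recollement unit $X\to j_*j^*X$) goes from $X$ \emph{to} the $\nabla$-part, not the other way, so your displayed triangle $X\to X''\to\bigoplus\nabla(\lambda)[a_j+1]$ has the roles of remainder and $\E_0$-piece swapped.
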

\begin{proof}
It is enough to consider the case in which $\A$ consists of a single object $X$. The implications (2) $\Rightarrow$ (1) $\Rightarrow$ (3) are clear. (3) $\Rightarrow$ (2) is the only nontrivial step. Although it is proved in the proof of \cite[(2.4) Theorem]{cps1}, we provide a full proof because it contains an important construction. 

Suppose $\Hom^n(\Delta(\lambda),X)=0$ for $n\not\equiv \ell(\lambda)$ mod $2$. Let $Y_0= X$. We show that we can construct $Y_0,\cdots,Y_i\in\mathcal{D}^b(\mathcal{C})$ inductively. It is enough to show that we can find a distinguished triangle $ Y_{i+1}\to Y_{i}\to\nabla(\lambda_i)[n_i]\to$ such that (i) $n_i\equiv \ell(\lambda_i)$ mod $2$; (ii) the cohomology $H^\bullet(Y_{i+1})$ has composition factors with lower highest weights compared to the composition factors in $H^\bullet(Y_i)$ (the meaning of this condition will become clearer in the course of the proof); (iii) $\Hom^n(\Delta(\lambda),Y_{i+1})=0$ for $n\equiv \ell(\lambda)+1$ mod 2. Pick a maximal weight $\lambda_i$ among the highest weights of the composition factors in $H^\bullet(Y_i)$. Say it is in $H^{n_i}(Y_i)$. Since $\lambda_i$ is maximal, by universal property of $\nabla(\lambda_i)$, there is a nonzero map from $H^{n_i}(Y_i)$ to $\nabla(\lambda_i)$.
This map lifts to a morphism from $Y_i$ to $\nabla(\lambda_i)[n_i]$ in the derived category ${\mathcal{D}^b(\mathcal{C})}$. 
So we get a distinguished triangle $ Y_{i+1}\to Y_{i}\to\nabla(\lambda_i)[n_i]\to$. 
Since we took a map to $\nabla(\lambda_i)$ whose preimage contains a composition factor of $H^\bullet (Y_i)$ isomorphic to $L(\lambda_i)$, we have
$$[H^\bullet (Y_{i+1}):L(\lambda_i)]<[H^\bullet(Y_i):L(\lambda_i)],$$
and all the other differences between $H^\bullet (Y_i)$ and $H^\bullet (Y_{i+1})$ involve only the composition factors in $\nabla(\lambda_i)/ L(\lambda_i)$ which only has weights lower than $\lambda_i$. Thus we have the condition (ii). 
Since $\Hom^n(\Delta(\lambda_i),Y_{i})=0$ for $n\equiv \ell(\lambda_i)+1$ mod 2, the $n_i$ should satisfy the condition (i). 
Finally (i) and the right $\Delta(\lambda)$-parity of $Y_i$ implies (iii).
\end{proof}

\begin{rem} \label{s.0 example}
\leavevmode
\begin{enumerate}
\item In fact, the construction of the distinguished triangle in the proof does not use the right $\Delta(\lambda)$-parity of $X$. The same induction in the proof works removing the conditions (i), (iii). This shows that all complexes are filtered by shifts of costandard modules. A complex belongs to the category $\E$ when there appear the ``correct shifts" only. For example, let $\mathcal C$ be (a truncation of) $U_\zeta$-mod with $l\geq h$. So $0$ is a regular weight, and $L(0)=\Delta(0)=\nabla(0)$. Denoting by $s$ the reflection through the upper wall of $C$, we have short exact sequences $0\to L(0)\to\Delta(s.0)\to L(s.0)\to 0$ and $0\to L(s.0)\to \nabla(s.0)\to  L(0) \to 0$ of $U_\zeta$-modules in the orbit of the weight $0$. Then $\Delta(s.0)$ is not in $\E^R$, even up to shifts, because both $\nabla(0)$ and $\nabla(0)[-1]$ appear when one applies the above construction of distinguished triangles:
$$\nabla(0)\oplus \nabla(0)[-1]=L(0)\oplus L(0)[-1]\cong Y_1\to Y_0=\Delta(s.0)\to \nabla(s.0)\to,$$
$$\nabla(0)[-1]\cong Y_2\to Y_1\to \nabla(0)  \to,$$
$$0= Y_3\to Y_2\to \nabla(0)[-1]  \to.$$

\item If the $Y_i, \lambda_i, n_i$ are as in the proof, the character of $X$ is given by $\Sigma_i (-1)^{n_i}[\nabla(\lambda_i)].$ By (1) this is true for any $X\in \mathcal{D}^b(\mathcal{C})$. Then $X$ is in $\E^R$ if and only if there is no cancellation in the character formula. In the example above, $\nabla(0)$ and $\nabla(0)[-1]$ cancel each other in characters, hence are invisible in the character formula.
\end{enumerate}
\end{rem}
\medskip


We are mostly interested in the case in which $\A$ in Proposition \ref{recog}  is the set $\{L(\lambda)[\ell(\lambda)]\ |\ \lambda\in\Lambda\}$. We say that $M\in \mathcal C$ has parity if it has $L$-parity for any irreducible $L\in \mathcal C$. This is equivalent to $M$ having a parity projective resolution, i.e., a projective resolution $P_\bullet$ such that $$P(\lambda) \textrm{ is a direct summand of } P_i\Rightarrow i\equiv \ell(\lambda)+\epsilon \textrm{ mod 2},$$ where $\epsilon$ is either $0$ or $1$ (uniformly). Then $\{L(\lambda)[\ell(\lambda)]\ |\ \lambda\in\Lambda\}\subset \E^L\cap \E^R$ if and only if all standard modules have parity. (The $\epsilon$ in a parity projective resolution of $\Delta(\lambda)$ is determined by the equality $\epsilon\equiv \ell(\lambda)$ mod 2.) Following \cite{cps1}, we say $\mathcal C$ has a \textit{Kazhdan-Lusztig theory} if the set $\{L(\lambda)[\ell(\lambda)]\ |\ \lambda\in\Lambda\}$ is contained in $\E^R$ (and $\E^L$, but the two conditions are the same under duality).

In the case of $U_\zeta$-modules, each $L(w.\lambda)[\ell(w)]$ for $\lambda\in C^-_\Z$ does belong to $\E^L\cap \E^R$. (The length function we use in defining $\E^L$ and $\E^R$ is, of course, the usual length function on $W_l$.) This follows from Proposition \ref{recog} and \eqref{reghLCF} (and its dual), since $P_{x,y}$ is a polynomial on $t^2$.

Letting $\D=\mathcal{D}^b(\mathcal{C})$, the recognition theorem can be formulated in our notation from \S \ref{parity} as follows.
\begin{prop}\label{pvcat}
We have 
$$(\E^L_0)^\se=\E^R \textrm{ and }^\se(\E_0^R)=\E^L.$$
\end{prop}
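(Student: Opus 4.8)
The plan is to prove the two equalities $(\E^L_0)^\se=\E^R$ and $^\se(\E^R_0)=\E^L$; by the duality $\Delta\leftrightarrow\nabla$, $L\leftrightarrow L$ interchanging left and right parity, the second follows from the first, so I would concentrate on $(\E^L_0)^\se=\E^R$. Here $\E^L_0$ consists (by definition, dualizing the $\E_0$ in \S\ref{e}) of direct sums of shifts $\Delta(\lambda)[l(\lambda)+2m]$, and $X\in(\E^L_0)^\se$ means precisely that $\Hom^n(\Delta(\lambda),X)=0$ whenever $n\not\equiv l(\lambda)$ mod $2$. But that is verbatim condition (3) of Proposition \ref{recog}, and Proposition \ref{recog} states it is equivalent to $X\in\E^R$. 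So the proof is essentially a matter of unwinding the definition of $(\E^L_0)^\se$ and quoting Proposition \ref{recog}.

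In more detail, the key steps are: (i) Observe that for a single object $Y=\Delta(\lambda)[l(\lambda)+2m]$, ``$X$ is $Y$-even'' means $\Hom^k_\D(X,Y[k'])=0$ for all odd $k$, i.e. $\Hom^k(X,\Delta(\lambda)[l(\lambda)+2m])=0$, equivalently $\Hom^{k+l(\lambda)+2m}(X,\Delta(\lambda))=0$ for odd $k$; rewriting, $\Hom^j(X,\Delta(\lambda))=0$ for all $j\not\equiv l(\lambda)$ mod $2$. (ii) Since $\E^L_0$ is closed under the shifts $[2m]$ and under direct sums, $X\in(\E^L_0)^\se$ iff this vanishing holds for every $\lambda\in\Lambda$; note the $[2m]$-shifts contribute nothing new precisely because they preserve the parity of the relevant degree, which is why only condition on the parity of $n$ appears. (iii) Now pass to the version with $\Delta$ in the first slot: in a highest weight category $\Hom^n(X,\Delta(\lambda)) $ and $\Hom^n(\Delta(\lambda),X)$ are related by the duality functor on $\D^b(\mathcal C)$, which sends $\Delta(\lambda)\mapsto\nabla(\lambda)$... — but actually it is cleaner to note that $(\E^L_0)^\se$ is by definition the right-hand analogue: $(\E^L_0)^\se=\{X: \E^L_0 \text{ is } X\text{-even}\}$, i.e. $\Hom^n(\Delta(\lambda)[\ldots],X)=0$ for odd shifts, which directly gives condition (3) of Proposition \ref{recog}. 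Then invoke (3)$\Leftrightarrow$(1) of that proposition to conclude $X\in\E^R$, and conversely. (iv) Dualize to get $^\se(\E^R_0)=\E^L$.

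I do not expect a serious obstacle here: the statement is a repackaging of the recognition theorem in the closure notation of \S\ref{parity}, and the only care needed is bookkeeping with the direction of $\Hom$ (left vs.\ right parity, and $\Delta$ vs.\ $\nabla$ in a given slot) and with the fact that the even shifts $[2m]$ in the definition of $\E^L_0$ are exactly what makes ``$X$-even'' encode a condition on all degrees of the wrong parity rather than just one. If anything is delicate it is making sure the indexing of $\E^L_0$ (shifts by $l(\lambda)+2m$) lines up with the ``$n\not\equiv l(\lambda)$ mod $2$'' in Proposition \ref{recog}, which is immediate once one writes $n=l(\lambda)+2m+(\text{odd})$. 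So the proof I would write is short: unwind $(\E^L_0)^\se$ to condition (3), cite Proposition \ref{recog} for the equivalence with membership in $\E^R$, and apply the duality of highest weight categories to obtain the mirror statement $^\se(\E^R_0)=\E^L$.
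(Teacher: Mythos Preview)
Your proposal is correct and matches the paper's treatment: the paper presents Proposition~\ref{pvcat} simply as a reformulation of the recognition theorem (Proposition~\ref{recog}) in the closure notation of \S\ref{parity}, with no separate proof given. Your step~(i) momentarily computes $\Hom$ in the wrong slot (that would be relevant to $^\se(\E^L_0)$ rather than $(\E^L_0)^\se$), but you catch and correct this in step~(iii), and the final argument---unwinding $(\E^L_0)^\se$ to condition~(3) of Proposition~\ref{recog}, invoking that proposition, and dualizing---is exactly the intended one.
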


An immediate consequence of this (and Proposition \ref{basic}) is that $\E^R,\E^L$ are \textit{closed} in the sense that $(^\se(\E^R))^\se=\E^R$ and $^\se((\E^L)^\se)=\E^L$.

\begin{ex}
Consider $\mathcal C^\zeta=U_\zeta$-mod. Let $F$ be a facet in $\overline{C^-}$ and $\lambda\in F\cap X$. Suppose $\mu$ is a weight in $\overline F\setminus F$. Let $M\in \mathcal C^\zeta_\mu$. If $\fromthe M\in \E^R$ (defined in \S \ref{e}), then $M=0$.

This is proved as follows. By Proposition \ref{adj} and Proposition \ref{pvcat} below, we have $$(\tothe\E^L_0)^\se=(\fromthe)^{-1}((\E^L_0)^\se)=(\fromthe)^{-1}\E^R.$$ But since $$\tothe\Delta(y.\lambda)[\ell(y)+2m]=\Delta(y.\mu)[\ell(y)+2m],$$ $$\tothe\Delta(ys.\lambda)[\ell(y)+1+2m]=\Delta(y.\mu)[\ell(y)+1+2m]$$ for $y\in W^J$, $s\in J\setminus I$, $m\in\Z$, all shifts of $\Delta(y.\mu)$ for all (dominant) $y.\mu$ belong to $\tothe\E^L_0$. So if $\fromthe M\in \E^R$, then $\Hom^n(\Delta(y.\mu),M)=0$ for all $n$, hence $M=0$.
\end{ex}

\subsection{Linearity and parity}\label{lin}
In this section, we consider positively graded highest weight categories. Let $\mathcal C$ be a highest weight category as in \S \ref{e}. Identify $\mathcal C$ with the category of (finite dimensional) $A$-modules for some (finite dimensional quasi-hereditary) algebra $A$. What we assume now is that $A$ is a positively graded algebra and $A_0$ is semisimple. We let $\wC$ be the category of graded $A$-modules. So we have the ``forget the grading'' functor $F:\wC\to \mathcal C$ with $F\langle 1\rangle\cong F$. Here $\langle 1\rangle$ is the grade shift defined by $(M\langle 1\rangle)^i=M^{i-1}$ where $M^i$ denotes the grade $i$ component of $M\in\wC$. 

We call a graded module $\widetilde M\in \wC$ a \textit{(graded) lift} of $M\in \mathcal C$ if $F(\widetilde M)\cong M$. For any irreducible $L(\lambda)\in \mathcal C$, let $\widetilde L(\lambda)\in \wC$ be the irreducible of highest weight $\lambda$ concentrated in grade $0$, let $\widetilde \Delta(\lambda)$ be the lift of $\Delta(\lambda)$ whose head is $\widetilde L(\lambda)$, let $\widetilde\nabla(\lambda)$ be the lift of $\nabla(\lambda)$ whose socle is $\widetilde L(\lambda)$, let $\widetilde P(\lambda)$ be the projective cover of $\widetilde L(\lambda)$ in $\wC$, and let $\widetilde I(\lambda)$ be the injective envelope of $\widetilde L(\lambda)$ in $\wC$. Of course, $\w P(\lambda)$ lifts $P(\lambda)$ and $\w I(\lambda)$ lifts $I(\lambda)$.

Recall that $M\in\widetilde{\mathcal C}$ is called \textit{linear} if it has a projective resolution $P=P_\bullet$ such that the head of $P_{-i}$ is homogeneous of grade $i$, in other words, $\ext^n(M,\widetilde L(\lambda)\langle i\rangle)=0 \textrm{   unless $i=n$}$ for any $\lambda\in\Lambda$. We call such a projective resolution a \textit{linear projective resolution}. By definition, $\wC$ is \textit{Koszul} if each irreducible $\widetilde L(\lambda)$ is linear for any $\lambda\in\Lambda$. It is \textit{standard Koszul} if each standard module $\widetilde\Delta(\lambda)$ for $\lambda\in\Lambda$ is linear and each costandard module is \textit{colinear}, i.e., has an injective resolution $I_\bullet$ such that the socle of $I_i$ is homogeneous of grade $-i$. If $\mathcal C$ has a duality, then the condition on costandard modules follows from the one on standard modules.

Compare the following with Proposition \ref{2/3}. 
\begin{prop}\label{linlem} 
Suppose there is a short exact sequence $$0\to M\to M'\to M''\to 0$$ in $\widetilde{\mathcal{C}}$. Suppose $M',M''$ are linear. If $M$ is concentrated in grades $\geq 1$, then $M\langle -1\rangle$ is linear.
\end{prop}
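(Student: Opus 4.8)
The plan is to derive the linearity of $M\langle -1\rangle$ from the long exact sequence in $\ext$-groups coming from the given short exact sequence, using the hypotheses on $M',M''$ together with the grading constraint on $M$. Concretely, I would apply $\ext^\bullet(-,\widetilde L(\lambda)\langle i\rangle)$ to $0\to M\to M'\to M''\to 0$ to get, for each $\lambda\in\Lambda$ and each $i,n$, the exact sequence
\begin{equation*}
\ext^{n-1}(M,\widetilde L(\lambda)\langle i\rangle)\to \ext^n(M'',\widetilde L(\lambda)\langle i\rangle)\to \ext^n(M',\widetilde L(\lambda)\langle i\rangle)\to \ext^n(M,\widetilde L(\lambda)\langle i\rangle)\to \ext^{n+1}(M'',\widetilde L(\lambda)\langle i\rangle).
\end{equation*}
By linearity of $M'$ and $M''$, the outer four terms vanish whenever $i\neq n$ (for the $M''$ terms one needs $i\neq n-1$ and $i\neq n+1$), so a first pass gives that $\ext^n(M,\widetilde L(\lambda)\langle i\rangle)$ can be nonzero only for $i\in\{n-1,n,n+1\}$; this is not yet linearity for $M\langle -1\rangle$, which would require concentration at $i=n+1$. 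So the argument must use the grading hypothesis to rule out $i=n$ and $i=n-1$.

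The second ingredient is that $M$ is concentrated in grades $\geq 1$, i.e. $\widetilde L(\lambda)$ appears in $M$ only with a grade shift $\langle j\rangle$, $j\geq 1$. I would first handle $n=0$: $\ext^0(M,\widetilde L(\lambda)\langle i\rangle)=\Hom(M,\widetilde L(\lambda)\langle i\rangle)$ is the $\langle i\rangle$-graded piece of the (graded dual of the) head of $M$, which lives in grades $\geq 1$, so it vanishes for $i\leq 0$ and in particular for $i=0$; combined with the first-pass bound this forces $\ext^0(M,\widetilde L(\lambda)\langle i\rangle)=0$ unless $i=1$. That is exactly the linearity condition for $M\langle -1\rangle$ in homological degree $0$. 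For $n\geq 1$ one proceeds by induction: having a linear presentation through degree $n-1$ means the kernel/syzygy $\Omega M$ of the projective cover $\widetilde P\surj M$ (with $\widetilde P$ generated in grade $\geq 1$) is again concentrated in grades $\geq 2$, and the $\ext$ long exact sequence relates $\ext^n(M,\widetilde L(\lambda)\langle i\rangle)$ to $\ext^{n-1}(\Omega M,\widetilde L(\lambda)\langle i\rangle)$; alternatively, and more cleanly, one dimension-shifts in the original short exact sequence by replacing $M',M''$ with their syzygies, which remain linear after a grade shift, and $M$ with $\Omega M$, and iterates the $n=0$ argument. Either route reduces everything to the degree-zero computation above plus the bookkeeping that syzygies of linear modules are linear up to shift.

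The main obstacle I anticipate is the inductive/dimension-shifting step: one must keep careful track of grade shifts so that ``$M$ concentrated in grades $\geq 1$'' propagates correctly to the syzygies (the syzygy of something in grades $\geq 1$ generated in grade $\geq 1$ sits in grades $\geq 2$, etc.), and one must make sure the short exact sequence $0\to M\to M'\to M''\to 0$ can be ``resolved'' compatibly — i.e. that choosing projective covers gives a short exact sequence of syzygies $0\to \Omega M\to \Omega M'\to \Omega M''\to 0$ with $\Omega M'$, $\Omega M''\langle 1\rangle$ still linear. This is the horseshoe lemma together with the characterization of linearity via vanishing of $\ext$, and it is routine but is where the bulk of the verification lies. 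Everything else is the long exact sequence in $\ext$ and the elementary observation about the graded head of $M$.
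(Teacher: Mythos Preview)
Your approach is correct and will prove the proposition, with a cleaner execution than you anticipate. Two remarks on the details. First, your ``first pass'' is sharper than you state: from the long exact sequence, $\ext^n(M,\widetilde L(\lambda)\langle i\rangle)$ sits between $\ext^n(M',\widetilde L(\lambda)\langle i\rangle)$ and $\ext^{n+1}(M'',\widetilde L(\lambda)\langle i\rangle)$, so it already vanishes unless $i\in\{n,n+1\}$; the value $i=n-1$ never enters. You therefore need only rule out $i=n$. Second, ruling out $i=n$ does not require the horseshoe lemma or any compatible resolution of the short exact sequence; it follows from the grading hypothesis on $M$ alone. Since $A_0$ is semisimple and $A$ is positively graded, the minimal syzygy of a module concentrated in grades $\geq k$ lies in grades $\geq k+1$, so by induction the $n$-th term of the minimal projective resolution of $M$ is generated in grades $\geq n+1$, and hence $\ext^n(M,\widetilde L(\lambda)\langle i\rangle)=0$ for all $i\leq n$. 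This is exactly your first suggested route via $\Omega M$; the horseshoe alternative is unnecessary.

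The paper's proof is organized differently. It takes the minimal projective resolutions $P,P',P''$ and the distinguished triangle $P\to P'\to P''\to$, observes (as above) that $P_n$ is generated in grades $\geq n+1$, and argues that the chain map $P\to P'$ must vanish degreewise because its image would have to land in a part of $P'_n$ generated in grade $n$. From this it extracts a short exact sequence $0\to P'\to P''\to P[1]\to 0$ of complexes, and linearity of $P[1]$ (hence of $M\langle -1\rangle$) is then read off termwise. Both arguments rest on the same fact about how the grading of $M$ propagates through its syzygies; yours packages this as an $\ext$-vanishing statement and combines it with the long exact sequence, while the paper manipulates the resolutions directly. Your route is arguably more transparent, since it separates cleanly the contribution of the linearity of $M',M''$ (the upper bound $i\leq n+1$) from that of the grading hypothesis on $M$ (the lower bound $i\geq n+1$).
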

\begin{proof}
 Let $P, P', P''\in \mathcal D^b(\wC)$ be minimal projective resolutions of $M, M', M''$ respectively. Automatically, $P',P''$ are linear. 
 There is a distinguished triangle
$$P\to P'\to P''\to P[1]\to.$$ Positivity of grading and the assumption on $M$ implies that the degree $n$ term of $P_n$ of $P$ is generated by grade $n+1$ or greater. By linearity, the kernel of $P'\to P''$ in degree $n$ should be generated by grade $n$, but the image of $P\to P'$ is in grades $n+1$ or greater. This shows that the map $P\to P'$ is zero (in each degree).
So we have a short exact sequence
$$0\to P'\to P''\to P[1]\to 0.$$ It follows that $P[1]$ is linear, and so is $P\langle -1\rangle=P[1][-1]\langle -1\rangle$. Hence $M\langle -1\rangle$ is linear. 
\end{proof}

\begin{cor}\label{nonlin}
Suppose there is a short exact sequence $$0\to M\to M'\to M''\to 0$$ in $\widetilde{\mathcal{C}}$, and $M',M''$ linear. If $M$ is concentrated in grades $\geq 2$, then $M$ is $0$.
\end{cor}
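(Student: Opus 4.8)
The plan is to deduce Corollary \ref{nonlin} directly from Proposition \ref{linlem}. Given the short exact sequence $0\to M\to M'\to M''\to 0$ with $M',M''$ linear and $M$ concentrated in grades $\geq 2$, the first observation is that $M$ is in particular concentrated in grades $\geq 1$, so Proposition \ref{linlem} applies and tells us $M\langle -1\rangle$ is linear. The key point to exploit is that a linear module has its head in grade $0$: if $P_\bullet$ is a linear projective resolution of $M\langle -1\rangle$, then $P_0$ has head homogeneous of grade $0$, so $M\langle -1\rangle$ is generated in grade $0$, i.e.\ $M\langle -1\rangle$ is concentrated in grades $\geq 0$ and has nonzero grade-$0$ part unless it is zero. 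But the hypothesis that $M$ lives in grades $\geq 2$ means $M\langle -1\rangle$ lives in grades $\geq 1$, so its grade-$0$ component vanishes.

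From here I would argue as follows. Since $M\langle -1\rangle$ is linear, it admits a linear projective resolution; minimality can be arranged, and for a minimal projective resolution linearity forces $P_0$ (the projective cover of $M\langle -1\rangle$) to be generated in grade $0$. Hence $\hd(M\langle -1\rangle)$ is concentrated in grade $0$. If $M\langle -1\rangle\neq 0$, then by Nakayama (or simply because $A$ is positively graded with $A_0$ semisimple, so $\rad$ raises degree) $M\langle -1\rangle$ is generated by its head, forcing a nonzero grade-$0$ component of $M\langle -1\rangle$. This contradicts $M$ being concentrated in grades $\geq 2$, which makes $M\langle -1\rangle$ concentrated in grades $\geq 1$. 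Therefore $M\langle -1\rangle = 0$, and so $M = 0$.

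The only mild subtlety — and the step I expect to require the most care — is justifying that a linear module is generated in grade $0$, i.e.\ that the grade-$0$ part of a linear projective resolution's zeroth term already maps onto the module. This is where positivity of the grading and semisimplicity of $A_0$ are used: $\rad A = A_{\geq 1}$, so $\rad P$ sits in grades $\geq (\text{generator degrees})+1$, and the head of $P_0$ being in grade $0$ means $P_0$ (hence $M\langle -1\rangle$) is generated in grade $0$. One should also note that passing to a minimal projective resolution preserves linearity, which is standard for positively graded algebras with semisimple degree-zero part. Once this is in hand, the corollary is immediate; no computation beyond the degree bookkeeping above is needed.
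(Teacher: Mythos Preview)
Your proof is correct and essentially identical to the paper's: both apply Proposition \ref{linlem} to conclude $M\langle -1\rangle$ is linear, then observe that a linear module is generated in grade $0$ while $M\langle -1\rangle$ lives in grades $\geq 1$, forcing $M=0$. The paper phrases this as ``there is a surjective map $P_0\to M$ with $P_0$ generated in grade $1$, but $M$ is in grades $\geq 2$,'' which is the same argument shifted by one.
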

\begin{proof}
By Proposition \ref{linlem}, there is a surjective map $P_0\to M$ where $P_0\in\wC$ is generated by its components in grade 1. Since $M$ is concentrated in grades $\geq 2$, the image of the map $P_0\to M$ is zero, and hence $M=0$.
\end{proof}

There are analogues of the categories $\E^R, \E^L$ for $\mathcal D^b(\wC)$. The category $\widetilde\E^R$ (denoted by $\E^R$ in \cite{PS14}) is defined as the union of $\wE^R_i$ where $\wE^R_i$ is defined inductively as follows. Set $\wE^R_0$ to be the full subcategory of $\mathcal{D}^b(\wC)$ whose objects are the direct sums of $\widetilde\nabla(\lambda)\{m\}$ for $\lambda\in\Lambda$, $m\in\Z$. Here $\{-\}$ is the shift defined as $\{1\}=\langle 1\rangle[1]$. Then we define $\wE^R_i$ to be the full subcategory of $\mathcal{D}^b(\wC)$ such that $$X\in\wE^R_i \Leftrightarrow \text{there is a distinguished triangle   } X'\to X\to X''\to \text{ with } X'\in\wE^R_{i-1}, X''\in \wE^R_0 .$$ The dual category $\wE^L$ is defined dually.
There is also a version of the recognition theorem (Proposition \ref{recog}), which is proved in a similar way.

\begin{prop}\cite[Theorem 3.3]{PS14}
Let $X\in\mathcal{D}^b(\wC)$. Then \\
$$X\in \wE^R \Leftrightarrow \textrm{ $\Hom_{\mathcal{D}^b(\wC)}^n(\widetilde\Delta(\lambda),X\langle m\rangle)\neq 0$ implies $m=n$ (for all $\lambda\in \Lambda$).}$$
\end{prop}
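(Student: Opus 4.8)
The plan is to prove this as the graded refinement of Proposition~\ref{recog}: both implications follow the argument given there, with the length function on $\Lambda$ replaced by the internal grading and the mod~$2$ parity bookkeeping replaced by a $\mathbb Z$--bookkeeping of grade shifts. As in Proposition~\ref{recog} it suffices to treat a single object $X\in\mathcal D^b(\wC)$, and I will call the right--hand condition the \emph{linearity condition} on $X$.

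For ``$\Rightarrow$'' I would first note that the class of objects satisfying the linearity condition is closed under passing to the middle term of a distinguished triangle: if $X'\to X\to X''\to$ is distinguished and $\Hom^n_{\mathcal D^b(\wC)}(\widetilde\Delta(\lambda),X\langle m\rangle)\neq0$, then in the long exact sequence coming from $\Hom^\bullet_{\mathcal D^b(\wC)}(\widetilde\Delta(\lambda),-\langle m\rangle)$ the group must map nontrivially to the $X''$--term or receive nontrivially from the $X'$--term, and in either case $m=n$ (cf.\ Proposition~\ref{2/3}). Since $\wE^R=\bigcup_i\wE^R_i$ and each $\wE^R_i$ is by definition built from $\wE^R_{i-1}$ and $\wE^R_0$ by exactly this operation, it remains to check the base case $X=\widetilde\nabla(\nu)\{j\}\in\wE^R_0$. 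There $\widetilde\nabla(\nu)\{j\}\langle m\rangle=\widetilde\nabla(\nu)\langle m+j\rangle[j]$, so $\Hom^n_{\mathcal D^b(\wC)}(\widetilde\Delta(\lambda),\widetilde\nabla(\nu)\{j\}\langle m\rangle)=\Ext^{\,n+j}_{\wC}(\widetilde\Delta(\lambda),\widetilde\nabla(\nu)\langle m+j\rangle)$, which by the orthogonality of standard and costandard objects in a graded highest weight category vanishes unless $n+j=0$, $\lambda=\nu$ and $m+j=0$; in the surviving case $m=n=-j$.

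For ``$\Leftarrow$'' I would run the inductive construction from the proof of Proposition~\ref{recog} while carrying the grading along. Put $Y_0=X$, and as long as $Y_i\neq0$ choose a maximal weight $\lambda_i$ among the highest weights of the composition factors of $H^\bullet(Y_i)$, occurring as $\widetilde L(\lambda_i)\langle m_i\rangle$ in $H^{N_i}(Y_i)$. By maximality every $H^q(Y_i)$ lies in the Serre subcategory of $\wC$ generated by $\{\widetilde L(\mu)\,:\,\mu\not>\lambda_i\}$, a highest weight category in which $\widetilde\Delta(\lambda_i)$ is projective; hence $\Ext^{>0}_{\wC}(\widetilde\Delta(\lambda_i),H^q(Y_i)\langle r\rangle)=0$ for all $q,r$, the hyper--$\Ext$ spectral sequence degenerates, and $\Hom^{N_i}_{\mathcal D^b(\wC)}(\widetilde\Delta(\lambda_i),Y_i\langle-m_i\rangle)\cong\Hom_{\wC}(\widetilde\Delta(\lambda_i),H^{N_i}(Y_i)\langle-m_i\rangle)\neq0$. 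The linearity condition applied to $Y_i$ then forces $-m_i=N_i$. Lifting a nonzero map $H^{N_i}(Y_i)\to\widetilde\nabla(\lambda_i)\langle m_i\rangle$ to a morphism $Y_i\to\widetilde\nabla(\lambda_i)\langle m_i\rangle[-N_i]$ in $\mathcal D^b(\wC)$ and completing it to a distinguished triangle $Y_{i+1}\to Y_i\to\widetilde\nabla(\lambda_i)\langle m_i\rangle[-N_i]\to$, the third term equals $\widetilde\nabla(\lambda_i)\langle-N_i\rangle[-N_i]=\widetilde\nabla(\lambda_i)\{-N_i\}\in\wE^R_0$. Exactly as in the ungraded proof, the composition--factor content of $H^\bullet(Y_{i+1})$ is strictly smaller than that of $H^\bullet(Y_i)$ in a well--founded order (one copy of $\widetilde L(\lambda_i)\langle m_i\rangle$ disappears and all remaining differences involve only weights $<\lambda_i$), so the process terminates at some $Y_M=0$. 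Reading the triangles $Y_{i+1}\to Y_i\to\widetilde\nabla(\lambda_i)\{-N_i\}\to$ from $Y_M=0\in\wE^R_0$ upward, the definition of the categories $\wE^R_k$ gives $Y_{M-k}\in\wE^R_k$, so $X=Y_0\in\wE^R$.

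The step requiring care is the one handled implicitly in Proposition~\ref{recog}: showing that $Y_{i+1}$ still carries enough of the linearity condition to keep the induction running. The linearity condition is stable under middles of triangles but not under fibers, so it cannot simply be asserted for $Y_{i+1}$; what one actually needs and can propagate is the one--sided vanishing of $\Hom^n(\widetilde\Delta(\lambda),Y_{i+1}\langle m\rangle)$ for $m$ on the wrong side of $n$ relative to the current maximal weight. Establishing this — in particular checking that the connecting maps in the relevant long exact sequences do not revive the forbidden groups, which is precisely where one uses the shift compatibility $-m_i=N_i$ of the peeled--off costandard — is the main bookkeeping obstacle. Everything else (the spectral--sequence degeneration, the well--foundedness of the termination, the closure properties of $\wE^R$) is routine and parallels \S\ref{e}.
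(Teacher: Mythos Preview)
The paper does not give its own proof of this proposition; it simply states that the result ``is proved in a similar way'' to Proposition~\ref{recog} and cites \cite[Theorem~3.3]{PS14}. Your approach---run the proof of Proposition~\ref{recog} with the parity bookkeeping replaced by grade bookkeeping---is exactly this similar proof, and your forward direction and the construction in the backward direction are correct.

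The gap is in your last paragraph. You correctly flag that the linearity condition is not automatically inherited by $Y_{i+1}$, but you then retreat to a vague ``one--sided vanishing relative to the current maximal weight'' that you do not define or verify. In fact no weakening is needed: the full linearity condition propagates to $Y_{i+1}$. From the long exact sequence associated to $Y_{i+1}\to Y_i\to\widetilde\nabla(\lambda_i)\{-N_i\}\to$, the only case not killed by the linearity of $Y_i$ and of $\widetilde\nabla(\lambda_i)\{-N_i\}$ is $\lambda=\lambda_i$, $n=N_i+1$, $m=N_i$, where one must check that the connecting map
\[
\Hom^{N_i}\bigl(\widetilde\Delta(\lambda_i),\widetilde\nabla(\lambda_i)\{-N_i\}\langle N_i\rangle\bigr)\longrightarrow \Hom^{N_i+1}\bigl(\widetilde\Delta(\lambda_i),Y_{i+1}\langle N_i\rangle\bigr)
\]
is zero. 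Equivalently, the preceding map $\Hom^{N_i}(\widetilde\Delta(\lambda_i),Y_i\langle N_i\rangle)\to\Hom(\widetilde\Delta(\lambda_i),\widetilde\nabla(\lambda_i))$ is surjective. But by your own spectral--sequence degeneration the source identifies with $\Hom_{\wC}(\widetilde\Delta(\lambda_i),H^{N_i}(Y_i)\langle N_i\rangle)$, and the map is post--composition with the very map $H^{N_i}(Y_i)\langle N_i\rangle\to\widetilde\nabla(\lambda_i)$ you chose; since that map was built to hit the socle $\widetilde L(\lambda_i)$, any preimage of a highest weight vector gives a morphism $\widetilde\Delta(\lambda_i)\to H^{N_i}(Y_i)\langle N_i\rangle$ whose composite is nonzero. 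This is the same surjectivity hidden in the sentence ``(i) and the right $\Delta(\lambda)$-parity of $Y_i$ implies (iii)'' of the ungraded proof. Once you insert this one line, your argument is complete and no ``one--sided'' detour is needed.
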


Thus, standard Koszulity (and its dual) is equivalent to that $\wE^R$ (and $\wE^L$) contains all irreducibles in $\mathcal D^b(\wC)$. We can combine $\wE^R$ and $\E^R$ to define a category studied in \cite{cpshodual}. We will call it $\E_{gr}^R$, following \cite[\S 1.3]{cpshodual}. Let $\E_{\textrm{gr},0}^R:=\wE_0 \cap \E^R_0$, where we view $\E^R_0$ a subcategory of $\mathcal D^b(\wC)$, pulling back via the forgetful functor. Thus $\E_{\textrm{gr},0}^R$ consists of direct sums of $\widetilde\nabla(\lambda)\{\ell(\lambda)+2m\}$, $m\in\Z$, $\lambda\in\Lambda$. The category $\E_{\textrm{gr}}^R$ is the union of all $\E_{\textrm{gr},i}^R$, where $\E_{\textrm{gr},i}^R$ is inductively defined as $$X\in\E^R_{\textrm{gr},i} \Leftrightarrow \text{there is a distinguished triangle   } X'\to X\to X''\to \text{ with } X'\in\E^R_{\textrm{gr},i-1}, X''\in \E^R_{\textrm{gr},0}.$$ Using this, the notion of a graded Kazhdan-Lusztig theory is introduced in \cite{cpshodual}: $\mathcal{C}$ is said to have a \textit{graded Kazhdan-Lusztig theory} if $\E^R_{\textrm{gr}}$ contains $\{L(\lambda)\{\ell(\lambda)+2m\}\ |\ \lambda\in\Lambda, m\in\Z\}$.

We have the third recognition theorem.
\begin{prop}\cite[Theorem 1.3.1]{cpshodual}
Let $X\in\mathcal{D}^b(\wC)$. Then \\
$$X\in \E_{\textrm{gr}}^R \Leftrightarrow \textrm{ $\Hom_{\mathcal{D}^b(\wC)}^n(\widetilde\Delta(\lambda),X\langle m\rangle)\neq 0$ implies $m=n$ and $n\equiv \ell(\lambda)$ (for all $\lambda\in \Lambda$).}$$
\end{prop}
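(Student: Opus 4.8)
The plan is to deduce this third recognition theorem from the two already available — the ungraded one (Proposition~\ref{recog}) and the graded one, \cite[Theorem 3.3]{PS14} — by comparing $\E^R_{\textrm{gr}}$ with $\wE^R$ and with $\E^R$ through the forgetful functor $F\colon\wC\to\mathcal C$. The statement I would isolate as a lemma is
$$X\in\E^R_{\textrm{gr}}\quad\Longleftrightarrow\quad X\in\wE^R\ \text{ and }\ F(X)\in\E^R,$$
which I would combine with the standard decomposition $\Hom^n_{\D^b(\mathcal C)}(\Delta(\lambda),F(X))\cong\bigoplus_{m\in\Z}\Hom^n_{\D^b(\wC)}(\wDelta(\lambda),X\langle m\rangle)$ (valid since $F$ is exact, $\wDelta(\lambda)$ lifts $\Delta(\lambda)$, and all the $\Hom$-spaces are finite dimensional). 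Granting the lemma, the theorem is immediate: by \cite[Theorem 3.3]{PS14}, $X\in\wE^R$ is equivalent to ``$\Hom^n(\wDelta(\lambda),X\langle m\rangle)\neq 0\Rightarrow m=n$'', and by Proposition~\ref{recog} applied to $F(X)$ together with the decomposition, $F(X)\in\E^R$ is equivalent to ``$\Hom^n(\wDelta(\lambda),X\langle m\rangle)=0$ for every $m$ whenever $n\not\equiv l(\lambda)$ mod $2$'', and the conjunction of the two is exactly the condition in the statement.

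For the forward implication of the lemma I would note $\E^R_{\textrm{gr},0}\subset\wE^R_0$ and, since $F(\wnabla(\lambda)\{l(\lambda)+2m\})=\nabla(\lambda)[l(\lambda)+2m]$, also $F(\E^R_{\textrm{gr},0})\subset\E^R_0$. Because the inductive procedures defining $\wE^R$ and $\E^R$ are stable under passing to the middle term of a distinguished triangle and $F$ is triangulated, an induction on the defining index gives $\E^R_{\textrm{gr}}\subset\wE^R$ and $F(\E^R_{\textrm{gr}})\subset\E^R$.

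For the converse, suppose $X\in\wE^R$ and $F(X)\in\E^R$. Unwinding the definition of $\wE^R$, pick a finite filtration $X=X_0,X_1,\dots,X_N=0$ with distinguished triangles $X_{k+1}\to X_k\to Z_k\to$ in which each $Z_k\in\wE^R_0$ is a direct sum of shifts $\wnabla(\lambda)\{n\}$, and observe that every $X_k$ is again in $\wE^R$. From $\Ext^j_{\wC}(\wDelta(\mu),\wnabla(\nu)\langle b\rangle)=\delta_{\mu\nu}\delta_{j0}\delta_{b0}$ one computes that $\Hom^j(\wDelta(\lambda_0),\wnabla(\lambda_0)\{n_0\}\langle a\rangle)\neq 0$ only for $j=a=-n_0$; feeding this into the long exact sequences obtained by applying $\Hom^\bullet(\wDelta(\lambda_0),-\langle a\rangle)$, and using that each $X_k\in\wE^R$ so that these degenerate to short exact sequences at $j=a$, I would conclude that the total multiplicity of $\wnabla(\lambda_0)\{n_0\}$ among the $Z_k$ equals $\dim\Hom^{-n_0}(\wDelta(\lambda_0),X\langle -n_0\rangle)$. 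But $F(X)\in\E^R$ forces, through Proposition~\ref{recog} and the decomposition above, that this dimension vanishes unless $-n_0\equiv l(\lambda_0)$ mod $2$; hence every summand of every $Z_k$ already lies in $\E^R_{\textrm{gr},0}$, so each $Z_k\in\E^R_{\textrm{gr},0}$, and the filtration witnesses $X\in\E^R_{\textrm{gr}}$.

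I expect the multiplicity bookkeeping in this last step to be the only real obstacle: it hinges on a $\wE^R_0$-filtration of $X$ recording the costandard shifts unambiguously (the ``no cancellation'' phenomenon of Remark~\ref{s.0 example} transported to the graded setting), which is precisely why it is important that every intermediate term $X_k$ stay in $\wE^R$ so that the relevant long exact sequences split. One could instead bypass the lemma and rerun the inductive construction from the proof of Proposition~\ref{recog} directly inside $\D^b(\wC)$: at the $i$-th stage the hypothesis pins down both the grading and the parity of the costandard section $\wnabla(\lambda_i)\{n_i\}$, placing it in $\E^R_{\textrm{gr},0}$, and one checks, as there, that the hypothesis is inherited by the next term; this is routine but notationally heavier.
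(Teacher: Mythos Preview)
Your argument is correct. Note, however, that the paper does not supply its own proof of this proposition but simply cites \cite[Theorem~1.3.1]{cpshodual}, and then immediately records as a consequence the identity $\E^R_{\textrm{gr}} = F^{-1}\E^R\cap\wE^R$ --- which is precisely your lemma. So your route is the reverse of the paper's logical flow: you establish the lemma directly (via the filtration and multiplicity count, which does go through as you describe, the splitting of the long exact sequences being guaranteed because every intermediate $X_k$ lies in $\wE^R$), and then read off the recognition criterion from the two earlier recognition theorems together with the decomposition of $\Hom$ under $F$. Your closing alternative --- rerunning the inductive construction from the proof of Proposition~\ref{recog} inside $\D^b(\wC)$, tracking both the grading shift and the parity of the costandard section at each stage --- is the more self-contained route and is presumably closer in spirit to what \cite{cpshodual} actually does. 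What your main argument buys is that it makes the identity $\E^R_{\textrm{gr}} = \wE^R\cap F^{-1}\E^R$ logically prior rather than a corollary, and it avoids repeating the recognition-theorem machinery a third time; the cost is the multiplicity bookkeeping you flag, which is genuine but not difficult once one observes that $\Hom^j(\wDelta(\lambda),\wnabla(\lambda)\{n\}\langle a\rangle)$ is one-dimensional exactly at $j=a=-n$ and vanishes otherwise.
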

This shows that $\E_{\textrm{gr}}^R=F^{-1}\E^R\cap \wE^R$, where $F$ is the forgetful functor from $\mathcal D^b(\wC)$ to $\mathcal D^b(\mathcal C)$ induced by the forgetful functor from $\wC$ to $ \mathcal C$. Therefore, $\mathcal C$ has a graded Kazhdan-Lusztig theory if and only if $\mathcal C$ has a Kazhdan-Lusztig theory and is standard Koszul.
\smallskip

We conclude the section by presenting a relation between linearity and parity. It will apply to our case. 
\begin{prop}\label{linearpairty}
Suppose we have $\Ext^1_{\mathcal C}(L(\lambda_1),L(\lambda_2))=0$ whenever $\ell(\lambda_1)\equiv \ell(\lambda_2)$ mod 2. If $M\in \mathcal C$ has a linear lift $\widetilde M\in \wC$, then $M$ has parity. In particular, standard Koszulity implies a Kazhdan-Lusztig theory.
\end{prop}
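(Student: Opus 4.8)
The plan is to establish the contrapositive direction through the ``only if'' portion of Proposition \ref{linlem} / Corollary \ref{nonlin}, feeding in the hypothesis on vanishing of same-length $\Ext^1$'s to convert a \emph{linear} projective resolution of $\wM$ into a \emph{parity} projective resolution of $M$. First I would take a minimal linear projective resolution $\wP_\bullet\to\wM$ in $\wC$, so that $\wP_{-i}$ is generated in grade $i$, i.e. $\hd \wP_{-i}$ is a direct sum of $\wL(\mu)\langle i\rangle$'s. Applying the forgetful functor $F$ gives a minimal projective resolution $P_\bullet=F\wP_\bullet\to M$ in $\mathcal C$, in which $P_{-i}$ is a direct sum of $P(\mu)$'s with $\mu$ ranging over weights for which $\wL(\mu)\langle i\rangle$ occurs in $\hd\wP_{-i}$. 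The goal is to show that $P(\mu)$ can appear in $P_{-i}$ only when $l(\mu)\equiv i+\epsilon$ mod $2$ for a uniform $\epsilon$, which by the discussion after Proposition \ref{recog} is exactly the statement that $M$ has parity.

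The key step is to propagate the parity constraint down the resolution by induction on $i$, using the grading. Here is the mechanism: if $\wL(\mu)\langle i\rangle$ and $\wL(\nu)\langle i+1\rangle$ both sit in consecutive radical layers of $\wP$ in such a way that there is a nonzero differential component linking the corresponding projective summands, then (by minimality and linearity) $\Ext^1_{\mathcal C}(L(\mu),L(\nu))\neq 0$, hence by hypothesis $l(\mu)\not\equiv l(\nu)$ mod $2$. More precisely, linearity forces every pair of weights $\mu,\nu$ appearing in adjacent terms $\wP_{-i},\wP_{-i-1}$ that are ``connected'' in the resolution to satisfy $l(\mu)\not\equiv l(\nu)$ mod $2$; since a minimal linear resolution is connected in this sense (each indecomposable summand of $\wP_{-i-1}$ maps nontrivially to $\wP_{-i}$, tracing back to a single source $\wP_0=\wP(\lambda)$ when $M=L(\lambda)$, and in general to the summands of $\wP_0$), the parity of $l(\mu)$ relative to $i$ is forced to be constant. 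Setting $\epsilon$ to be that common value of $l(\mu)-i$ mod $2$ (reading off from $\wP_0$) gives the desired uniform parity, so $M$ has a parity projective resolution and hence has parity.

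For the final sentence, ``standard Koszulity implies a Kazhdan--Lusztig theory,'' I would observe that if $\mathcal C$ is standard Koszul then every $\wDelta(\lambda)$ is linear, so by the main assertion each $\Delta(\lambda)$ has parity --- provided the hypothesis $\Ext^1_{\mathcal C}(L(\lambda_1),L(\lambda_2))=0$ for $l(\lambda_1)\equiv l(\lambda_2)$ holds. But this hypothesis is itself a consequence of standard Koszulity: in a standard Koszul category $\Ext^1_{\mathcal C}(L(\lambda_1),L(\lambda_2))$ is concentrated in internal degree $1$, and the parity of the internal degree of $\Ext^n$ between irreducibles matches $n$ modulo $2$ and is compatible with the length function (this is where one invokes that $\wDelta,\wnabla$ are pure of the appropriate degrees, so that the graded radical filtration refines the length filtration mod $2$). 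Hence all standard modules have parity, i.e. $\{L(\lambda)[l(\lambda)]\}\subset\E^R$ (and dually $\E^L$), which is precisely a Kazhdan--Lusztig theory in the sense recalled after Proposition \ref{recog}.

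The main obstacle I anticipate is making rigorous the ``connectedness'' argument that forces the parity of $l(\mu)-i$ to be a single constant $\epsilon$ rather than varying between different connected components of the resolution graph --- equivalently, ruling out that $M$ (or a general linear module) could be a direct sum of pieces whose parities are misaligned. For $M$ irreducible this is automatic since $\wP_0$ is indecomposable; for general linear $M$ one must argue that linearity of $\wM$ already forces the different summands of $\wP_0=\wP(\lambda_1)\oplus\cdots\oplus\wP(\lambda_r)$ to have $l(\lambda_j)$ all congruent mod $2$, which uses the hypothesis on same-length $\Ext^1$ applied at the top: any two composition factors $L(\lambda_j),L(\lambda_k)$ of $\hd M$ that are ``linked'' through $M$ in degree $1$ must have opposite length-parity, but being in the head of a single module in grade $0$ they cannot be linked by an $\Ext^1$, forcing consistency. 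Sorting out this bookkeeping carefully --- and handling modules whose head genuinely has several non-linked constituents, where ``parity'' should be interpreted componentwise --- is the delicate point; everything else is a direct unwinding of the definitions of linear and parity projective resolutions.
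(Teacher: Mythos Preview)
Your approach is the same as the paper's: take a linear projective resolution $\wP_\bullet$ of $\wM$, observe that by minimality and linearity the differential sends $\hd\wP_{-i}$ into the layer $\rad\wP_{-i+1}/\rad^2\wP_{-i+1}$, and use the $\Ext^1$ hypothesis to force opposite length-parities across each differential. The paper states this in one line (``$P_i$ and $P_{i+1}$ have opposite parity'') and draws the conclusion immediately.

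Your worry about ``connectedness'' is well founded, and the paper's proof glosses over exactly this point. In fact the proposition is \emph{false} as literally stated. Take $A$ to be the path algebra of the quiver $a\to b$ over $\C$, graded with the arrow in degree~$1$; this is quasi-hereditary with $\Delta(a)=P(a)$, $\Delta(b)=L(b)$. Set $l(a)=0$, $l(b)=1$. The only nonzero $\Ext^1$ between simples is $\Ext^1(L(a),L(b))$, so the hypothesis on same-parity $\Ext^1$'s is satisfied. Now $\wM=\wL(a)\oplus\wL(b)$ is linear (its minimal resolution is $\wP(b)\langle 1\rangle\to\wP(a)\oplus\wP(b)$), yet $\Ext^0(M,L(b))\cong\C\cong\Ext^1(M,L(b))$, so $M$ has no $L(b)$-parity. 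The trouble is precisely that $\hd M$ contains simples of both length-parities, and the resolution is disconnected. The statement becomes correct once one assumes that all constituents of $\hd M$ share a common parity; this holds automatically for standard modules (irreducible head) and for the $U_i$ of \S\ref{sstr} (where $\hd U_i=\bigoplus_{l(x)=i}L(yx.\lambda)$ with $l(yx)=l(y)+i$ constant), which are the only cases the paper needs.

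One genuine error in your write-up: you try to \emph{derive} the hypothesis $\Ext^1(L(\lambda_1),L(\lambda_2))=0$ for $l(\lambda_1)\equiv l(\lambda_2)$ from standard Koszulity. This does not follow --- standard Koszulity says nothing about how the arbitrary length function $l$ interacts with the grading. The ``in particular'' clause is to be read under the standing $\Ext^1$ hypothesis of the proposition: given that hypothesis, standard Koszulity makes each $\wDelta(\lambda)$ linear, and since $\Delta(\lambda)$ has irreducible head the connectedness issue disappears, so $\Delta(\lambda)$ has parity and a Kazhdan--Lusztig theory follows.
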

\begin{proof}
Let $P_\bullet$ be a linear projective resolution of $\widetilde M$. Then $P_i\to P_{i+1}$ maps the head of $P_i$, which is in grade $-i$, to the second radical layer of $P_{i+1}$. Then by Lemma \ref{lem} below, $P_i$ and $P_{i+1}$ have opposite parity. In other word, $P_\bullet$ is a parity resolution of $\widetilde M$. Let $L$ be any irreducible object in $\mathcal C$. Then $\Ext_\mathcal{C}^n(M,L)=\Hom_\mathcal{C}(P_{-n},L)$ can be nonzero only when $P_{-n}$ and $L$ have the same parity, thus $M$ has $L$-parity. The claim follows.
 The last sentence of the Proposition is obtained by taking $M$ to be a standard module.
\end{proof}

\section{Koszulity and singular Kazhdan-Lusztig theory}\label{koszul}

Let for $J\subset S_l$ and $y,w\in W^J$
$$P^J_{y,w}:=\sum_{x\in W_J}(-1)^{\ell(x)}P_{yx,w}.$$ 
This is called a \textit{parabolic Kazhdan-Lusztig polynomial} \cite{Deodpara,KTparabolic}.

Our goal is to show that
\begin{equation}\label{hLCF}
\sum_{n=0}^\infty \dim \Ext^n_{U_\zeta}(\Delta(y.\mu),L( w.\mu))t^n=t^{\ell( w)-\ell( y)}\bar{P}^{J}_{y,w}
\end{equation}
holds for all $\mu\in \overline{C^-_\Z}$, $y,w\in W^+(\mu)$, where $J=\{s\in S_l\ |\ s.\mu=\mu\}$. Assuming that $l$ is KL-good, it is enough to prove the formula \eqref{hLCF} in $\cO$ at the negative level $k=-l/2D-g$. We use the same notation for standard, costandard, irreducible objects in $\cO_k$ as in $U_\zeta$-mod. Any $\mu\in\overline{C^-_\Z}$, which is a weight for $U_\zeta$, determines $\widetilde\mu=\mu+k\chi$, a weight for $\cO_k$. So the Kazhdan-Lusztig correspondence maps $\Delta(\widetilde{w.\mu})$ to $\Delta(w.\mu)$, $L(\widetilde{w.\mu})$ to $L(w.\mu)$, etc. We have $w.\w\mu=\w{w.\mu}$ if we identify the affine Weyl group for $U_\zeta$ with the one for $\w \fg$ as in \S \ref{sskl}.

To apply \cite{SVV} more easily, we further identify the extension spaces to the ones in $\mathbb{O}$, the category $\cO$ for $\widehat{\mathfrak{g}}$. 
Given a weight $\widetilde\mu=\mu+k\chi$ for $\widetilde{\fg}$, we fix a weight $$\widehat{\mu}:=\mu+k\chi +b\delta$$ for $\widehat{\fg}$, where $\delta$ is the fundamental imaginary root and $b$ is some number we don't care as long as it makes $\widehat{\mu}$ lie out of the critical hyperplanes. By \cite[Corollary 3.2]{psspecht} and the preceding footnote, the orbit of $\widetilde\mu$ in $\cO$ is isomorphic to the orbit of $\widehat{\mu}$ in $\mathbb{O}^+$. Here $\mathbb{O}^+$ is the full subcategory of $\mathbb{O}$ consisting of the modules whose composition factors are of integral dominant highest weight (dominant for the subalgebra $\fg$). 
Recall that the integral Weyl group of $\widehat{\mu}$ is defined to be generated by the simple reflections corresponding to the simple roots $\alpha$ such that $ (\alpha,\alpha) \textrm{ divides }2(\widehat{\mu} + \rho, \alpha)$, where $(-,-)$ is the usual bilinear form on $\widehat{\mathfrak{h}}^*$. 
It is isomorphic to $W_l$ as a Coxeter group, since $\widehat{\mu}$ lies out of the critical hyperplanes. 
We denote this integral Weyl group by $W_l$ for convenience and keep the notation in \S \ref{sprem}. 
We can also keep the Coxeter ordering on $W^+(\mu)$ as the poset ordering \cite[Appendix I]{psspecht}. 
In this setting, the formula \eqref{hLCF} is equivalent to 
\begin{equation}\label{affhLCF}
\sum_{n=0}^\infty \dim \Ext^n_{\mathbb{O}^+}(\Delta(y.\widehat\mu),L( w.\widehat\mu))t^n=t^{\ell( w)-\ell( y)}\bar{P}^{J}_{y,w}
\end{equation}
 for $\mu\in \overline{C^-_\Z}$, $y,w\in W^+(\mu)$. 

Given a highest weight category $\mathcal C'$ with poset $\Lambda$, a truncation $\mathcal{C}=\mathcal C'[\Gamma]$ by a poset ideal $\Gamma\subset \Lambda$ is defined to be the Serre subcategory of $\mathcal C'$ generated by $\{L(\gamma)\ |\ \gamma\in\Gamma\}$. Its objects are those with composition factors of the form $L(\gamma)$, $\gamma\in\Gamma$. The category $\mathcal C$ satisfies 
\begin{equation}\label{recoll}
\Ext_\mathcal{C}^n(X,Y)=\Ext_\mathcal{C'}^n(X,Y)
\end{equation}
for $X,Y\in\mathcal C$ by the general theory of highest weight categories \cite[Theorem 3.9]{CPShwc}. Applying this to the case $\mathcal C'=\mathbb O^+$, it is enough to prove \eqref{affhLCF} in $\mathcal{C}=\mathbb O^+[\Gamma]$ for a finite ideal $\Gamma$ containing $y.\widehat{\mu},w.\widehat{\mu}$. 

\subsection{Koszul grading and parity vanishing}\label{sssvv}
We assume in this subsection that the level $k$ is an integer. This is in order to use the result of \cite{SVV}. We also assume that $l>h$. We will see later that these restrictions are not necessary for our result.

Let $\mathcal C_{\widehat\lambda}$, $\mathcal C_{\widehat\mu}$ be truncations of $\widehat\lambda$ and $\widehat\mu$ orbits as in \cite[\S 3.4]{SVV}. That is, there is some $v\in W_l$, which we do not keep track of, such that $\mathcal C_{\widehat\lambda}=\mathbb O^+[\Lambda]$ where $\Lambda=\{w.\widehat\lambda\in W^+.\widehat\lambda\ |\ w\leq v \}$. And $\mathcal C_{\widehat\mu}$ is similarly defined. (In the notation of \cite{SVV}, $\mathcal C_{\widehat\lambda}=^v\!\!\mathbf O^\emptyset_{I,-}$ and $\mathcal C_{\widehat\mu}=^v\!\!\mathbf O^\emptyset_{J,-}$.) We assume that $\widehat\lambda$ is regular.\footnote{We need neither fix the level $k$ nor assume $l>h$, as the translation functors can move the level. But we make this assumption anyway, because it is easy to take care of the restriction on $k$ altogether when we treat the case of non-integer $k$. See the proof of Theorem \ref{thm}.} Then we have the following.

\begin{thm}\cite[Theorem 3.12, Lemma 5.10]{SVV}\label{svv}
The categories $\mathcal C_{\widehat\lambda}$, $\mathcal C_{\widehat\mu}$ are standard Koszul. Letting $\widetilde{\mathcal C}_{\widehat\lambda}$, $\widetilde{\mathcal C}_{\widehat\mu}$ be the corresponding categories of graded modules, there is a graded translation functor $\widetilde\tothe:\widetilde{\mathcal C}_{\widehat\lambda}\to\widetilde{\mathcal C}_{\widehat\mu}$ which lifts the (ungraded) translation functor $\tothe:\mathcal C_{\widehat\lambda}\to \mathcal C_{\widehat\mu}$. (See \cite[Proposition 4.36]{SVV} and the remark below.) That is, $F\circ \widetilde\tothe\cong \tothe \circ F$ where $F$ is the functor (on both $\widetilde{\mathcal C}_{\widehat\lambda}$ and $\widetilde{\mathcal C}_{\widehat\mu}$) that forgets the grading. The functor $\widetilde\tothe$ satisfies $\widetilde\tothe\widetilde L(w.\widehat{\lambda})=\widetilde L(w.\widehat{\mu})$ for $w\in W^J$. 
\end{thm}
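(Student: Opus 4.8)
The plan is to obtain the two Koszulity assertions and the existence of a graded translation functor directly from \cite{SVV}, and then to carry out the one genuinely new step, namely fixing the grading shift in the last sentence. First I would match our truncated blocks to those of \cite[\S3.4]{SVV}: by construction $\mathcal C_{\widehat\lambda}={}^{v}\mathbf{O}^{\emptyset}_{I,-}$ and $\mathcal C_{\widehat\mu}={}^{v}\mathbf{O}^{\emptyset}_{J,-}$ in the notation there, so \cite[Theorem 3.12]{SVV} provides a Koszul grading on each (arising there from the geometry of the relevant affine flag varieties and a formality statement, though only its existence is needed here), and \cite[Lemma 5.10]{SVV} upgrades this to standard Koszulity — linearity of every $\widetilde\Delta(w.\widehat\lambda)$, and, via the duality on $\mathbb O^+$, colinearity of every $\widetilde\nabla(w.\widehat\lambda)$, and likewise on the $\widehat\mu$ side. (Regularity of $\widehat\lambda$, integrality of $k$, and the condition $l>h$ are invoked only so as to sit inside the hypotheses of \emph{loc.\ cit.}; the singular block $\mathcal C_{\widehat\mu}$ is covered by the same results.)

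Next I would produce $\widetilde\tothe$. The ungraded functor $\tothe\colon\mathcal C_{\widehat\lambda}\to\mathcal C_{\widehat\mu}$ is a projective functor — tensoring with a suitable module followed by projection onto a block — so \cite[Proposition 4.36]{SVV}, which constructs graded lifts of the projective functors on these categories compatibly with the grade-forgetting functor $F$, supplies a functor $\widetilde\tothe$ with $F\circ\widetilde\tothe\cong\tothe\circ F$, well-defined up to an overall grade shift $\langle m\rangle$.

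The only point not immediate from \cite{SVV}, and the one I expect to cost the most care, is the normalization $\widetilde\tothe\,\widetilde L(w.\widehat\lambda)=\widetilde L(w.\widehat\mu)$ for $w\in W^J$. Ungraded, Proposition \ref{tr}(3) gives $\tothe L(w.\widehat\lambda)=L(w.\widehat\mu)$; since the only graded lifts of a simple module are its grade shifts, any graded lift of $\tothe$ satisfies $\widetilde\tothe\,\widetilde L(w.\widehat\lambda)\cong\widetilde L(w.\widehat\mu)\langle d_w\rangle$ for integers $d_w$, and it is enough to see that $d_w$ is independent of $w$ — then replacing $\widetilde\tothe$ by $\widetilde\tothe\langle d\rangle$ with $d$ the common value gives the claim. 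For this independence I would use that $\widetilde\tothe$ is exact and that the $\widehat\mu$-block is indecomposable, so that the value $d_w$ propagates along nonzero graded morphisms linking the $\widetilde L(w.\widehat\lambda)$ through standard subquotients — concretely along the degree-one inclusions of standard modules furnished by standard Koszulity — forcing a single $d$ throughout $W^J$. The bookkeeping that makes this propagation precise is the main technical nuisance, but the conclusion is forced by connectedness of the block.
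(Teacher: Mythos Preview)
The paper does not prove this theorem; it quotes it wholesale from \cite{SVV}, and the only content the paper itself contributes is the Remark immediately following. You have correctly located the Koszulity claims in \cite[Theorem 3.12, Lemma 5.10]{SVV} and the graded lift of translation in \cite[Proposition 4.36]{SVV}, but you have misidentified where the actual care lies.

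The issue the paper singles out is \emph{not} the normalization of the grading shift. It is a hypothesis mismatch inside \cite{SVV}: the condition ``$d+N>f$'' in \cite[Lemma 5.10, Proposition 4.36]{SVV} (bounding the difference of levels by the dual Coxeter number) is different from, and does not imply, the condition $\xi-\nu\in W_aP^+$ required to invoke the Kashiwara--Tanisaki translation \cite{KTtranslation} that the proof of \cite[Proposition 4.36]{SVV} uses as its first step. The paper's Remark repairs this by constructing $T_\nu^\xi$ in two stages through an auxiliary weight $-n\widehat\rho$ with $n$ large: one first translates $\nu$ to $-n\widehat\rho$ (an equivalence, since both lie in the antidominant alcove) and then translates $-n\widehat\rho$ to $\xi$, the latter being legitimate because $\xi+n\widehat\rho$ is dominant for large $n$. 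You do not mention this, and it is the only thing the paper actually argues.

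Conversely, the normalization $\widetilde\tothe\,\widetilde L(w.\widehat\lambda)=\widetilde L(w.\widehat\mu)$ that you flag as ``the main technical nuisance'' is part of what the paper simply cites from \cite[Lemma 5.10]{SVV}; no separate argument is given. Your propagation sketch is plausible in spirit, but as written it is loose: showing the translated extension remains nonsplit, and that enough pairs $w,w'\in W^J$ are linked by such extensions to connect all of $W^J$, would take real work (the first point in particular is not automatic from exactness alone). In any case it is superfluous here.
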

\begin{rem}
The condition ``$d+N>f$'' in \cite[Lemma 5.10]{SVV} or a similar condition in \cite[Proposition 4.36]{SVV} says that the difference between the level of $\widehat{\mu}$ and the level of $\widehat\lambda$ is less than the dual Coxeter number $g$. (The dual Coxeter number is denoted by $N$ in \cite{SVV}. The numbers $d,f$ in \cite{SVV} are such that $-d-N$ and $-f-N$ are the levels of the weights.) But to use the translation in \cite{KTtranslation}, as the beginning of the proof of \cite[Proposition 4.36]{SVV} does, a different assumption on the weights is required: Given two integral affine weights $\nu,\xi$ of (not necessarily the same) negative levels, the translation $T_\nu^\xi$ from the orbit of $\nu$ in $\mathbb O$ to the orbit of $\xi$ in $\mathbb O$ as in \cite[\S 3]{KTtranslation} exists if $\xi-\nu\in W_aP^+$ where $P^+$ is the set of integral dominant (affine) weights for $\widehat{\fg}$ and $W_{a}$ is the (affine) Weyl group of $\widehat{\fg}$ (See also \cite[\S 2]{KTtranslation}). This assumption is different from and not implied by the condition $d+N>f$.

 We can instead construct the desired translation in two steps as follow. As in \cite{SVV}, it is enough to define a translation $T_\nu^\xi$ where $\nu$ is a regular (integral) weight. Then $T_\xi^\nu$ can be defined to be its left adjoint. Let $\widehat\rho:=\rho+g\chi$ be our choice of an ``affine $\rho$''. Now the antidominant alcove in this setting can be defined by the condition $\langle \xi+\widehat{\rho},\alpha \rangle< 0$ for all affine root $\alpha$. Then, given any antidominant integral weight $\xi$, the weights $\xi+n\widehat\rho$, $\nu+n\widehat{\rho}$ are integral for each $n\in\Z$. They are dominant if $n$ is sufficiently large. Take such an $n$ which is also positive. Now $\xi-(-n\widehat\rho), \nu-(-n\widehat{\rho})\in P^+\subset W_aP^+$ defines the translations $T_{-n\widehat\rho}^\xi$ and $T_{-n\widehat{\rho}}^\nu$.
Note that $\nu$ and $-n\widehat\rho$ are in the same facet, the antidominant alcove. This implies the translation functor $T_{-n\widehat{\rho}}^\nu$ is an equivalence (See for example \cite[Propositions 3.6, 3.8]{KTtranslation} and the comparison theorem \cite[Theorem 5.8]{psottawa}, or see \cite[\S 6]{psspecht}). We fix an inverse and call it $T_\nu^{-n\widehat\rho}$. Since $T_\nu^{-n\widehat\rho}$ is an inverse of a translation functor, it behaves just like a classical translation functor. Finally, let $T_\nu^\xi:=T_{-n\widehat\rho}^\xi\circ T_\nu^{-n\widehat\rho}$. The functor $T_\nu^\xi$ has all the properties that the classical translations have. Therefore, the rest of \cite[Proposition 4.36, Lemma 5.10]{SVV} works.  
\end{rem}

\medskip


Let $$\widetilde\fromthe:\widetilde{\mathcal C_{\widehat\mu}}\to\widetilde{\mathcal C_{\widehat\lambda}}$$ be a left adjoint of $\widetilde\tothe$. Its existence follows from the adjoint functor theorem because we are dealing with finite number of irreducible objects and $\End(L)=\C$ for each irreducible $L$. 

We want the translation functors in Theorem \ref{svv} for $\widehat{\fg}$ (restricted to $\mathcal O_k$) to agree with the translation functors for $U_\zeta$-mod via the KL correspondence. To avoid discussing this problem, we redefine the translation $\tothe :\mathcal C^\zeta_\lambda\to\mathcal C^\zeta_\mu$ to be $\mathscr F_l(\tothe)$ and $\fromthe :\mathcal C^\zeta_\mu\to\mathcal C^\zeta_\lambda$ to be $\mathscr F_l(\fromthe)$. Then everything we need from \S \ref{sstr} is still true by the same proof using the basic properties in \cite[Proposition 4.36]{SVV}. We denote $\Ext^n_{\widetilde{\mathcal C}}(-,-)$ by $\ext^n_{\mathcal C}(-,-)$ and $\Hom_{\widetilde{\mathcal C}}(-,-)$ by $\hom_{\mathcal C}(-,-)$.

\begin{cor}\label{crossinglinear}
The module $\widetilde\fromthe\widetilde\tothe\widetilde\Delta(y.\widehat\lambda)$ is linear for any $y\in W^J$.
\end{cor}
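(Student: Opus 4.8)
The plan is to deduce linearity of $\widetilde\fromthe\widetilde\tothe\widetilde\Delta(y.\widehat\lambda)$ from the fact that, by Theorem \ref{svv}, the graded category $\widetilde{\mathcal C_{\widehat\lambda}}$ is standard Koszul, so $\widetilde\Delta(y.\widehat\lambda)$ is linear, and that the graded functor $\widetilde\fromthe\widetilde\tothe$ is a composition of graded translation functors that should be exact and ``degree-preserving'' in the appropriate sense. First I would recall that $\widetilde\tothe$ is a graded lift of the ungraded $\tothe$ (Theorem \ref{svv}) and that $\widetilde\fromthe$ is its left adjoint; I would check that $\widetilde\fromthe$ likewise lifts $\fromthe$ (this follows by a uniqueness-of-adjoint argument together with $F\circ\widetilde\tothe\cong\tothe\circ F$, since $F$ intertwines the ungraded adjunction). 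Both $\widetilde\tothe$ and $\widetilde\fromthe$ are exact, being lifts of exact functors and having exact forgetful composites, and both send projectives to projectives (being left adjoints of exact functors, hence left adjoints sending projectives to projectives, or simply because they are summands-of-translation functors which are known to preserve tilting/projective objects).

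The key step is then: apply $\widetilde\fromthe\widetilde\tothe$ to a linear projective resolution $\widetilde P_\bullet\to\widetilde\Delta(y.\widehat\lambda)$. Exactness gives a projective resolution $\widetilde\fromthe\widetilde\tothe\widetilde P_\bullet$ of $\widetilde\fromthe\widetilde\tothe\widetilde\Delta(y.\widehat\lambda)$. To see this resolution is linear I must control the grades of the heads of $\widetilde\fromthe\widetilde\tothe\widetilde P(\lambda)$: since $\widetilde\tothe\widetilde L(w.\widehat\lambda)=\widetilde L(w.\widehat\mu)$ (or $0$) with no grade shift, and dually for the graded projectives $\widetilde\tothe\widetilde P(w.\widehat\lambda)$ should be built from $\widetilde P(w.\widehat\mu)$ with its head in grade $0$, and likewise $\widetilde\fromthe$ of a grade-$0$-headed projective has head in grade $0$, the functor $\widetilde\fromthe\widetilde\tothe$ carries a module generated in grade $i$ to a module generated in grade $i$. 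Hence the term in homological degree $-i$, which has head purely in grade $i$ for the linear resolution $\widetilde P_\bullet$, is sent to a projective generated in grade $i$, i.e. with head in grade $i$; this is exactly linearity of $\widetilde\fromthe\widetilde\tothe\widetilde\Delta(y.\widehat\lambda)$.

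The main obstacle I expect is precisely the claim that $\widetilde\fromthe$ and $\widetilde\tothe$ are grade-preserving on heads of projectives, i.e.\ that no internal grade shift creeps in. The cleanest route is via the adjunction and the normalization $\widetilde\tothe\widetilde L(w.\widehat\lambda)=\widetilde L(w.\widehat\mu)$ supplied by Theorem \ref{svv}: for a graded projective $\widetilde P$ with head concentrated in grade $0$ one has $\hom_{\mathcal C}(\widetilde\tothe\widetilde P,\widetilde L(w.\widehat\mu)\langle m\rangle)=\hom_{\mathcal C}(\widetilde P,\widetilde\fromthe\widetilde L(w.\widehat\mu)\langle m\rangle)$; combined with Proposition \ref{tr}(4) (socle and head of $\fromthe L$ are $L(y.\lambda)$) lifted to the graded setting and the positivity of the grading, one forces the relevant $\hom$ to vanish unless $m=0$, which pins down that $\widetilde\tothe$ sends grade-$0$-headed projectives to grade-$0$-headed projectives, and symmetrically for $\widetilde\fromthe$. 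Once this normalization is in place the rest is the bookkeeping on resolutions sketched above, so I would state the head-preservation as a short lemma (or cite the relevant part of \cite[Proposition 4.36]{SVV}) and keep the main argument brief.
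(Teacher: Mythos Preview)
Your approach is correct in spirit but considerably more circuitous than the paper's. The paper gives a two-line proof that bypasses resolutions entirely: by the adjunction $\widetilde\fromthe\dashv\widetilde\tothe$ and the normalization $\widetilde\tothe\widetilde L(w.\widehat\lambda)=\widetilde L(w.\widehat\mu)$ from Theorem~\ref{svv},
\[
\ext^n_{\mathcal C_{\widehat\lambda}}\bigl(\widetilde\fromthe\widetilde\tothe\widetilde\Delta(y.\widehat\lambda),\,\widetilde L(w.\widehat\lambda)\langle i\rangle\bigr)
\cong
\ext^n_{\mathcal C_{\widehat\mu}}\bigl(\widetilde\Delta(y.\widehat\mu),\,\widetilde L(w.\widehat\mu)\langle i\rangle\bigr),
\]
and the right-hand side vanishes unless $n=i$ by standard Koszulity of the \emph{singular} block $\mathcal C_{\widehat\mu}$. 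That is linearity, by definition. The crucial difference is that the paper invokes standard Koszulity of $\mathcal C_{\widehat\mu}$, not of $\mathcal C_{\widehat\lambda}$; this makes the ``main obstacle'' you identify simply evaporate, since no grade-tracking through projectives is needed at all.

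Your resolution-based argument does appear in the paper, but only as the subsequent Remark, and there it is carried out by applying $\widetilde\fromthe$ alone to a linear resolution of $\widetilde\Delta(y.\widehat\mu)$ in the singular block (not $\widetilde\fromthe\widetilde\tothe$ to a resolution in the regular block). That version needs only the single left adjunction to check $\widetilde\fromthe\widetilde P(w.\widehat\mu)\cong\widetilde P(w.\widehat\lambda)$, whereas your version must also control $\widetilde\tothe\widetilde P(w.\widehat\lambda)$. Note in particular that the adjunction you write, $\hom(\widetilde\tothe\widetilde P,\widetilde L\langle m\rangle)=\hom(\widetilde P,\widetilde\fromthe\widetilde L\langle m\rangle)$, uses $\widetilde\fromthe$ as a \emph{right} adjoint of $\widetilde\tothe$, which the paper has not established (only the left adjunction is defined); in the graded setting the right adjoint typically differs from the left by a grade shift, so this step would need extra care. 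The paper's direct $\ext$ computation sidesteps all of this.
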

\begin{proof}
 Adjunction gives for all $n, i$
\begin{align*}
\ext^n_{\mathcal C_{\widehat{\lambda}}}(\widetilde\fromthe\widetilde\tothe\widetilde\Delta(y.\widehat\lambda),\widetilde L(w.\widehat\lambda)\langle i\rangle)&\cong\ext^n_{\mathcal C_{\widehat{\lambda}}}(\widetilde\tothe\widetilde\Delta(y.\widehat\lambda),\widetilde\tothe\widetilde L(w.\widehat\lambda)\langle i\rangle)\\
&\cong\ext^n_{\mathcal C_{\widehat\mu}}(\widetilde\Delta(y.\widehat\mu),\widetilde L(w.\widehat\mu)\langle i\rangle),
\end{align*} 
which is $0$ unless $n= i$ by standard Koszulity of $\mathcal C_{\widehat\mu}$. Thus $\widetilde\fromthe\widetilde\tothe\widetilde\Delta(y.\widehat\lambda)$ is linear.
\end{proof}
\begin{rem}
In fact, a linear projective resolution of $\widetilde\fromthe\widetilde\tothe\widetilde\Delta(y.\widehat\lambda)=\widetilde\fromthe\widetilde\Delta(y.\widehat\mu)$ is obtained by applying the translation to a linear projective resolution of $\widetilde\Delta(y.\widehat\mu)$. Let $P_\bullet$ be one. It is obvious that $\widetilde\fromthe P_\bullet$ is a projective resolution of $\widetilde\fromthe\widetilde\Delta(y.\widehat\mu)$. For linearity, we check $$\widetilde\fromthe \widetilde P(w.\widehat\mu)\cong\widetilde P(w.\widehat\lambda).$$ This is true up to grading shift by \cite[II.7.16]{J}, and we only need to check that the head of $\widetilde\fromthe \widetilde P(w.\widehat\mu)$ is in grade $0$. But this is the case because
\begin{align*}
\hom_{\mathcal C_{\widehat{\lambda}}}(\widetilde\fromthe\widetilde P(w.\widehat\mu),\widetilde L(z.\widehat\lambda)\langle i\rangle)
\cong\hom_{\mathcal C_{\widehat\mu}}(\widetilde P(w.\widehat\mu),\widetilde L(z.\widehat\mu)\langle i\rangle)
\end{align*} is zero unless $i=0$.
\end{rem}


Fix $y,w\in W^J$ where $J$ is associated to $\mu\in\overline{C^-}$. Recall the filtration $U_i$ from \S \ref{sstr}. We still denote by $U_i$ the $\widetilde{\fg}$-module $\mathscr F_l^{-1}U_i$ embedded in $\mathbb O$. Our new definition of the quantum translation gives $U_0=\fromthe\Delta(y.\widehat{\lambda})$. Lemma \ref{max}, \ref{facetcrossinglem} and Corollary \ref{maxcor} are still valid. Using the graded translations, we construct a graded lift of $U_i$ starting from $\widetilde U_0=\widetilde\fromthe\widetilde\tothe\widetilde\Delta(y.\widehat\lambda)$. We have $$0\to\widetilde U_{i+1}\to \widetilde U_{i}\to \bigoplus_{x\in W_J,\  \ell(x)=i}\widetilde\Delta(yx.\widehat\lambda)\langle n_x \rangle \to 0,$$ for some $n_x\in \Z$ depending on $x$. In fact, we know what the shifts $n_x$ are:

\begin{prop}\label{gradedses}
The filtration $\{\widetilde U_i\}$ of  $\widetilde\fromthe\widetilde\tothe\widetilde\Delta(y.\widehat\lambda)$ satisfies the short exact sequences $$0\to\widetilde U_{i+1}\to \widetilde U_{i}\to \bigoplus_{x\in W_J,\  \ell(x)=i}\widetilde\Delta(yx.\widehat\lambda)\langle i\rangle \to 0$$ for all $i$.
\end{prop}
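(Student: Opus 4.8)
The plan is to determine the grading shifts $n_x$ in the short exact sequences by combining two facts: the linearity of $\widetilde U_0=\widetilde\fromthe\widetilde\tothe\widetilde\Delta(y.\widehat\lambda)$ from Corollary \ref{crossinglinear}, and the precise radical-layer information from Proposition \ref{radicalses}. First I would recall that, by construction, $\widetilde U_i$ is the graded lift of $U_i$ with $\widetilde U_0$ normalized so its head sits in grade $0$ (this is the standard normalization: the head of the linear projective resolution starts in grade $0$, equivalently the head of $\widetilde\fromthe\widetilde\Delta(y.\widehat\mu)$ is in grade $0$, cf.\ the remark after Corollary \ref{crossinglinear}). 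Then I would argue by induction on $i$, with the inductive claim being that $\widetilde U_i$ is concentrated in grades $\geq i$ and that $\widetilde U_i\langle -i\rangle$ is linear.

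For the induction step, suppose $\widetilde U_i$ is concentrated in grades $\geq i$ with $\widetilde U_i\langle -i\rangle$ linear. By Proposition \ref{radicalses}, the head of $U_i$ lies exactly in the $i$-th radical layer of $U_0$; since the grading is the radical grading up to the normalization (a consequence of Koszulity — the grade filtration on each projective agrees with the radical filtration), this pins the head of $\widetilde U_i$ to grade $i$. Hence each section $\widetilde\Delta(yx.\widehat\lambda)\langle n_x\rangle$ with $l(x)=i$, being a quotient of $\widetilde U_i$ generated by (part of) the head of $\widetilde U_i$, must satisfy $n_x=i$: its generator is the image of a grade-$i$ generator of $\widetilde U_i$, and $\widetilde\Delta(yx.\widehat\lambda)$ is by definition generated in grade $0$. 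This gives the asserted short exact sequence $0\to\widetilde U_{i+1}\to\widetilde U_i\to\bigoplus_{l(x)=i}\widetilde\Delta(yx.\widehat\lambda)\langle i\rangle\to 0$. To close the induction I would check that $\widetilde U_{i+1}$ is concentrated in grades $\geq i+1$ (it sits inside $\rad U_i$ by Corollary \ref{maxcor}, which in grading terms means grades $\geq i+1$) and that $\widetilde U_{i+1}\langle -(i+1)\rangle$ is linear: apply Proposition \ref{linlem} to the sequence $0\to\widetilde U_{i+1}\to\widetilde U_i\to\bigoplus\widetilde\Delta(yx.\widehat\lambda)\langle i\rangle\to 0$ after shifting by $\langle -i\rangle$, using that both $\widetilde U_i\langle -i\rangle$ and the standard modules $\widetilde\Delta(yx.\widehat\lambda)$ are linear (standard Koszulity of $\mathcal C_{\widehat\lambda}$, Theorem \ref{svv}), together with the fact that $\widetilde U_{i+1}\langle -i\rangle$ is concentrated in grades $\geq 1$.

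The main obstacle I anticipate is the compatibility between the abstract grading coming from the graded translation functor $\widetilde\fromthe\widetilde\tothe$ and the concrete radical/socle-layer statements of Proposition \ref{radicalses} — that is, justifying that ``the head of $U_i$ is in radical layer $i$ of $U_0$'' really forces ``$\widetilde U_i$ is generated in grade $i$.'' For this one needs that the grade filtration on $\widetilde U_0$, pushed along the quotient maps, is compatible with the radical filtration of the ungraded module; this follows because $\widetilde U_0$ has a linear projective resolution, so its projective cover $\widetilde P$ has its grade-$j$ part generating exactly $\rad^j/\rad^{j+1}$ after forgetting the grading (a standard feature of Koszul algebras: the radical filtration is the grading filtration). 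Once this dictionary is in place, the determination $n_x=i$ is essentially forced, and the induction runs cleanly using Proposition \ref{linlem} as the engine that propagates linearity down the filtration.
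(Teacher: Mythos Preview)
Your proposal is correct and follows essentially the same approach as the paper: the key step in both is that Koszulity forces the grading filtration of $\widetilde U_0$ to coincide with its radical filtration (the paper invokes this directly, using that $\widetilde U_0$ has an irreducible head), after which Proposition \ref{radicalses} immediately pins down $n_x=i$. Your induction additionally carries along the linearity of each $\widetilde U_i\langle -i\rangle$, which the paper separates out as the subsequent Corollary \ref{filtrlinear}; so you have effectively merged the paper's two-line proof of Proposition \ref{gradedses} with its proof of Corollary \ref{filtrlinear} into a single inductive argument.
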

\begin{proof}
Since $\widetilde U_0$ has an irreducible head, its radical filtration agrees with its grading filtration by Koszulity. So this follows from Proposition \ref{radicalses}.
\end{proof}

\begin{cor}\label{filtrlinear}
For all $i$, $\widetilde U_i\langle -i\rangle\in\w{\mathcal{C_{\widehat\lambda}}}$ is linear.
\end{cor}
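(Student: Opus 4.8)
The plan is to prove Corollary \ref{filtrlinear} by downward induction on $i$, using the short exact sequences of Proposition \ref{gradedses} together with Proposition \ref{linlem}. First I would observe that for $i$ large the filtration terminates: $\widetilde U_i = 0$ for $i > l(w_J)$ (or more precisely for $i$ exceeding the maximal length occurring in $W_J$), so the base case is trivial since $0$ is linear. Then, assuming $\widetilde U_{i+1}\langle -(i+1)\rangle$ is linear, I want to deduce that $\widetilde U_i\langle -i\rangle$ is linear.

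For the induction step, apply the grade shift $\langle -i\rangle$ to the short exact sequence of Proposition \ref{gradedses} to obtain
\begin{equation*}
0\to \widetilde U_{i+1}\langle -i\rangle\to \widetilde U_i\langle -i\rangle\to \bigoplus_{x\in W_J,\ l(x)=i}\widetilde\Delta(yx.\widehat\lambda)\to 0.
\end{equation*}
Here the quotient $\bigoplus_x \widetilde\Delta(yx.\widehat\lambda)$ is linear, being a direct sum of standard modules in the standard Koszul category $\mathcal C_{\widehat\lambda}$ (Theorem \ref{svv}). The submodule is $\widetilde U_{i+1}\langle -i\rangle = (\widetilde U_{i+1}\langle -(i+1)\rangle)\langle 1\rangle$, which by the induction hypothesis is a grade shift (by one) of a linear module; in particular, since a linear module is concentrated in grades $\geq 0$, the module $\widetilde U_{i+1}\langle -i\rangle$ is concentrated in grades $\geq 1$. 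To invoke Proposition \ref{linlem} I need $\widetilde U_i\langle -i\rangle$ itself to be linear, but the shape of Proposition \ref{linlem} is the reverse: given the total and quotient linear, it concludes the sub (shifted) is linear. So instead I would use Proposition \ref{linlem} in the following way: I already know $\widetilde U_0\langle 0\rangle = \widetilde\fromthe\widetilde\tothe\widetilde\Delta(y.\widehat\lambda)$ is linear by Corollary \ref{crossinglinear}, which handles $i=0$ directly; for the inductive step I would rather run the induction \emph{upward} on $i$, using Proposition \ref{linlem} with total module $\widetilde U_i\langle -i\rangle$ (linear by hypothesis), quotient $\bigoplus_x\widetilde\Delta(yx.\widehat\lambda)$ (linear), and submodule $\widetilde U_{i+1}\langle -i\rangle$, concluding that $(\widetilde U_{i+1}\langle -i\rangle)\langle -1\rangle = \widetilde U_{i+1}\langle -(i+1)\rangle$ is linear — exactly what is needed, provided $\widetilde U_{i+1}\langle -i\rangle$ is concentrated in grades $\geq 1$.

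Thus the key points to verify are: (a) the base case $\widetilde U_0$ is linear, which is Corollary \ref{crossinglinear}; (b) that $\widetilde U_{i+1}\langle -i\rangle$ is concentrated in grades $\geq 1$, equivalently that $\widetilde U_{i+1}$ is concentrated in grades $\geq i+1$ — this follows from Proposition \ref{radicalses}(1) (which says $U_{i+1}\subset \rad^{i+1}\fromthe\tothe\Delta(y.\lambda)$) together with the fact that, by Koszulity of $\mathcal C_{\widehat\lambda}$, the radical filtration of $\widetilde\fromthe\widetilde\tothe\widetilde\Delta(y.\widehat\lambda)$ coincides with its grading filtration (as already used in the proof of Proposition \ref{gradedses}); and (c) that the quotient in the shifted short exact sequence is a direct sum of (genuine, grade-$0$-headed) standard modules $\widetilde\Delta(yx.\widehat\lambda)$, which is the content of Proposition \ref{gradedses} that the shift is exactly $\langle i\rangle$. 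Given these, Proposition \ref{linlem} applied inductively yields linearity of each $\widetilde U_{i+1}\langle -(i+1)\rangle$, completing the induction.

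The main obstacle I anticipate is purely bookkeeping with the grading shifts: making sure the direction of the induction matches the direction in which Proposition \ref{linlem} produces information (it turns a linear total and linear quotient into a linear shifted sub), and confirming the positivity-of-grades hypothesis ``concentrated in grades $\geq 1$'' at each stage. The positivity hypothesis is the real content and reduces, as noted, to Proposition \ref{radicalses}(1) plus the identification of the radical and grading filtrations via Koszulity; once that is in hand the rest is a formal two-line induction. No genuinely new idea beyond Proposition \ref{linlem} and the results of \S\ref{sstr} is required.
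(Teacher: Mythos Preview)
Your proposal is correct and follows essentially the same approach as the paper: upward induction on $i$, with base case $i=0$ given by Corollary \ref{crossinglinear} and the induction step carried out by applying Proposition \ref{linlem} to the (shifted) short exact sequence of Proposition \ref{gradedses}. One small simplification: the grades-$\geq 1$ hypothesis for $\widetilde U_{i+1}\langle -i\rangle$ can be read off directly from Proposition \ref{gradedses} itself (each section $\widetilde U_j/\widetilde U_{j+1}$ for $j\geq i+1$ lives in grades $\geq j\geq i+1$, hence so does $\widetilde U_{i+1}$), so you need not invoke Proposition \ref{radicalses}(1) separately.
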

\begin{proof}
It follows by induction on $i$. The base case is proven in Corollary \ref{crossinglinear}, and Propositions \ref{gradedses}, \ref{linlem} do the induction step. 
\end{proof}

We need the following in order to apply Proposition \ref{linearpairty}.
\begin{lem}\label{lem}
For $\mu\in \overline{C^-_\Z}$ and $y,z\in W^+(\mu)$ with $\ell(y)\equiv \ell(z)$ mod $2$, we have 
$$\Ext_{\mathcal C_{\widehat\mu}}^1(L(y.\widehat\mu),L(z.\widehat\mu))=0.$$
\end{lem}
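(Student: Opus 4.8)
The statement is a parity-vanishing result for $\Ext^1$ between two irreducibles in a singular block whose highest weights have the same length mod $2$. The natural strategy is to reduce it to the regular block, where the corresponding parity vanishing is already known from the Lusztig/Kazhdan--Lusztig picture (recall from \S\ref{ssrkl} that each $L(w.\lambda)[l(w)]$ lies in $\E^L\cap\E^R$, which is exactly the statement that $\Ext^n_{\mathcal C_{\widehat\lambda}}(L(y.\widehat\lambda),L(z.\widehat\lambda))=0$ whenever $n\not\equiv l(y)-l(z)$ mod $2$, since the $P_{x,y}$ are polynomials in $t^2$). The bridge between the singular and regular blocks is the translation functor $\fromthe:\mathcal C_{\widehat\mu}\to\mathcal C_{\widehat\lambda}$ together with the explicit structure of $\fromthe L(y.\widehat\mu)$ and $\fromthe L(z.\widehat\mu)$ provided by Proposition \ref{tr}(4).

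\textbf{Key steps.} First I would use the adjunction $(\tothe,\fromthe)$ and Proposition \ref{tr}(3), namely $\tothe L(y.\widehat\lambda)=L(y.\widehat\mu)$, to rewrite
\[
\Ext^1_{\mathcal C_{\widehat\mu}}(L(y.\widehat\mu),L(z.\widehat\mu))
\cong\Ext^1_{\mathcal C_{\widehat\lambda}}(\fromthe L(y.\widehat\mu),L(z.\widehat\lambda))
\]
(or, symmetrically, to relate it to $\Ext^1_{\mathcal C_{\widehat\mu}}(\tothe\fromthe L(y.\widehat\mu),L(z.\widehat\mu))$ via biadjointness). The point is that $\fromthe L(y.\widehat\mu)$ is a module in the \emph{regular} block whose composition factors are all of the form $L(yx.\widehat\lambda)$ with $x\in W_J$, and $l(yx)\equiv l(y)\pmod 2$ \emph{fails} in general — so I need to be more careful. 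The cleaner route is: apply $\Hom_{\mathcal C_{\widehat\lambda}}(\fromthe L(y.\widehat\mu),-)$ to a short exact sequence, and observe that the long exact sequence computing $\Ext^1_{\mathcal C_{\widehat\lambda}}(\fromthe L(y.\widehat\mu),L(z.\widehat\lambda))$ is controlled by $\Ext^0$ and $\Ext^1$ terms with source $L(yx.\widehat\lambda)$ and target $L(z.\widehat\lambda)$. Using the regular-block parity vanishing, $\Ext^n_{\mathcal C_{\widehat\lambda}}(L(yx.\widehat\lambda),L(z.\widehat\lambda))=0$ unless $n\equiv l(yx)-l(z)\equiv l(x)+l(y)-l(z)\equiv l(x)\pmod 2$ (using the hypothesis $l(y)\equiv l(z)$). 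So each composition factor $L(yx.\widehat\lambda)$ contributes to $\Ext^0$ only when $l(x)$ is even and to $\Ext^1$ only when $l(x)$ is odd. Now I would combine this with the structural input on $\fromthe L(y.\widehat\mu)$: by Proposition \ref{tr}(4) its head and socle are both $L(y.\widehat\lambda)$ ($l(x)=0$, even), and more precisely I expect its radical/grading filtration — via Koszulity (Theorem \ref{svv}) and Proposition \ref{radicalses}-type arguments applied to $\fromthe\tothe$ of a standard module — to place $L(yx.\widehat\lambda)$ in radical layer $\equiv l(x)$ mod $2$. Feeding this parity stratification into the long exact sequence forces the $\Ext^1$ contributions and the $\Ext^0$ contributions to "match up" across consecutive layers and cancel, leaving $\Ext^1_{\mathcal C_{\widehat\mu}}(L(y.\widehat\mu),L(z.\widehat\mu))=0$.

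\textbf{Alternative (cleaner) route.} Rather than chase the long exact sequence by hand, I would invoke the linearity already established: $\widetilde U_i\langle-i\rangle$ is linear (Corollary \ref{filtrlinear}), and in the graded category $\widetilde{\mathcal C_{\widehat\mu}}$ the module $\widetilde L(z.\widehat\mu)$ is concentrated in grade $0$. A nonzero class in $\Ext^1_{\mathcal C_{\widehat\mu}}(L(y.\widehat\mu),L(z.\widehat\mu))$ would, after choosing compatible graded lifts, live in $\ext^1_{\mathcal C_{\widehat\mu}}(\widetilde L(y.\widehat\mu),\widetilde L(z.\widehat\mu)\langle j\rangle)$ for some $j$; by Koszulity of $\mathcal C_{\widehat\mu}$ (Theorem \ref{svv}) this is nonzero only for $j=1$, i.e., the extension is "pure of degree $1$". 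On the other hand, translating up to the regular block via $\widetilde\fromthe$ and using that $\widetilde\fromthe\widetilde L(z.\widehat\mu)$ has a \emph{linear} costandard-type (or $\widetilde U_i$-type) filtration whose layers sit in grades of a single parity matching $l(x)$, the degree-$1$ part would have to be supported on composition factors $L(yx.\widehat\lambda)$ with $l(x)$ odd — but such a factor extending $L(z.\widehat\lambda)$ (with $l(yx)\not\equiv l(z)$) contradicts the regular-block Kazhdan--Lusztig parity, since there the $\Ext^1$ between irreducibles of opposite length-parity is the relevant one and here we are forcing the wrong parity. Running this carefully yields the vanishing.

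\textbf{Main obstacle.} The delicate point is the parity stratification of $\fromthe L(y.\widehat\mu)$ (equivalently, locating $L(yx.\widehat\lambda)$ in radical layer $\equiv l(x)$ mod $2$, or the grading statement for $\widetilde\fromthe\widetilde L(y.\widehat\mu)$). This is essentially a Koszulity/linearity fact — it is the graded analogue of the filtration $\{U_i\}$ of $\fromthe\tothe\Delta(y.\widehat\lambda)$ studied in Proposition \ref{radicalses} and Proposition \ref{gradedses} — but transferring it from standard modules to irreducibles, and keeping track of the parity bookkeeping so that the hypothesis $l(y)\equiv l(z)$ is used exactly once in the right place, is where the real work lies. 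Everything else (adjunction, the regular-block parity from \eqref{reghLCF}, the long exact sequence) is formal.
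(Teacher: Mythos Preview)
Your plan goes in the opposite direction from the paper's proof, and the obstacle you flag at the end is exactly the one the paper's argument is designed to avoid.

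The paper does \emph{not} translate the irreducible out of the wall via $\fromthe$ and then try to understand the parity stratification of $\fromthe L(y.\widehat\mu)$. Instead it translates the \emph{standard} module \emph{to} the wall via $\widetilde\tothe$. The point is that $\widetilde\tothe\widetilde\Delta(y.\widehat\lambda)=\widetilde\Delta(y.\widehat\mu)$ on the nose (Proposition~\ref{tr}(1)), and $\widetilde\tothe$ is exact and sends each simple $\widetilde L(w.\widehat\lambda)$ either to a simple of the same length parity (if $w\in W^J$) or to $0$. Since Koszulity of $\mathcal C_{\widehat\lambda}$ identifies the grade and radical filtrations of $\widetilde\Delta(y.\widehat\lambda)$, and the regular-block Kazhdan--Lusztig theory says those layers have alternating parity, the same alternating-parity property transfers verbatim to the grade (hence radical) filtration of $\widetilde\Delta(y.\widehat\mu)$. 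A nontrivial extension of $L(y.\widehat\mu)$ by $L(z.\widehat\mu)$ with $l(y)\equiv l(z)$ would then (after using duality to assume $y>z$) be a length-two quotient of $\Delta(y.\widehat\mu)$ with both layers of the same parity, contradicting what was just shown. That is the entire proof.

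Compared with your route, this buys two things. First, it replaces the hard object $\fromthe L(y.\widehat\mu)$ by the easy object $\tothe\Delta(y.\widehat\lambda)=\Delta(y.\widehat\mu)$; no filtration bookkeeping beyond ``exact functor applied to a radical series'' is needed. Second, it never requires the claim that the composition factors of $\fromthe L(y.\widehat\mu)$ are exactly the $L(yx.\widehat\lambda)$ with $x\in W_J$ --- a statement you invoke but do not justify, and which is not established anywhere in the paper (Proposition~\ref{tr}(4) controls only the multiplicity of $L(y.\widehat\lambda)$, the head, and the socle). Your ``alternative route'' via Koszulity of $\mathcal C_{\widehat\mu}$ alone is circular: Koszulity pins the extension to grade $1$ but says nothing about its parity, which is precisely the content of the lemma. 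The missing idea is simply to push parity information \emph{into} the singular block through $\tothe$ acting on standards, rather than trying to pull it \emph{out} through $\fromthe$ acting on simples.
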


\begin{proof}
First note that the statement is true for a regular weight $\lambda$. (For example, it follows from \eqref{reghLCF}, its dual, and \cite[Corollary (3.6)]{cps1}.) 
Also, Koszulity implies that the radical filtration and the grade filtration of a standard module $\Delta(y.\widehat\lambda)$ are the same. 
So the grade filtration of $\Delta(y.\widehat\lambda)$ has alternating parity. Since $\widetilde\tothe$ is exact and preserves the parity of irreducibles, the module $\widetilde\tothe\widetilde\Delta(y.\widehat\lambda)=\widetilde\Delta(y.\widehat\mu)$ also has a grade filtration with alternating parity. Hence $\Delta(y.\widehat\mu)$ has a radical filtration with alternating parity. 
Now suppose 
$$0\to L(z.\widehat\mu)\to M\to   L(y.\widehat\mu)\to 0$$
represents a non-trivial element in $\Ext_{\mathcal C_{\widehat\mu}}^1(L(y.\widehat\lambda),L(z.\widehat\lambda))$. 
The linkage principle rules out any possibilities other than the cases $z>y$ or $y>z$. We may assume $y>z$ by duality. Then there is a surjective map from $\Delta(y.\widehat\lambda)$ to $M$. This contradicts the assumption that $z$ and $y$ are of the same parity and that $\Delta(y.\widehat\mu)$ has a radical filtration with alternating parity.
\end{proof}

We now obtain a key property of the modules $U_i\in \mathcal C_{\widehat{\lambda}}$. Recall that (for a general highest weight category $\mathcal C$) an object $M\in \mathcal C$ is said to have $N$-parity if $\Ext^{2n+1}_{\mathcal C}(M,N)=0$ for all $n\in \Z$ and is said to have parity if it has $L$-parity for all irreducible $L\in\mathcal C$.

\begin{cor}\label{keycor}
For each $i$, $U_i$ has parity. 
\end{cor}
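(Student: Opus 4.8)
The plan is to deduce parity of each $U_i$ from the linearity of its graded lift $\widetilde U_i\langle -i\rangle$ (Corollary \ref{filtrlinear}) via Proposition \ref{linearpairty}, once the hypothesis of that proposition is verified in the present setting. First I would note that Proposition \ref{linearpairty} requires the vanishing $\Ext^1_{\mathcal C}(L(\lambda_1),L(\lambda_2))=0$ whenever $l(\lambda_1)\equiv l(\lambda_2)$ mod $2$; this is exactly Lemma \ref{lem}, applied to $\mathcal C=\mathcal C_{\widehat\lambda}$ (the regular block, where the statement of Lemma \ref{lem} is the easy case coming from \eqref{reghLCF} and \cite[Corollary (3.6)]{cps1}). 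So the hypotheses of Proposition \ref{linearpairty} hold for $\mathcal C_{\widehat\lambda}$.

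Next, by Corollary \ref{filtrlinear} the module $U_i\langle -i\rangle$ admits a linear graded lift $\widetilde U_i\langle -i\rangle$, and $U_i$ itself is the underlying ungraded module of $\widetilde U_i$ (equivalently of $\widetilde U_i\langle -i\rangle$, since the forgetful functor ignores shifts). Applying Proposition \ref{linearpairty} to $M=U_i\langle-i\rangle$ (as an object of $\mathcal C_{\widehat\lambda}$, forgetting the grading gives back $U_i$) yields that $U_i$ has parity, i.e. for every irreducible $L(z.\widehat\lambda)$ in $\mathcal C_{\widehat\lambda}$ we have $\Ext^{2n+1}_{\mathcal C_{\widehat\lambda}}(U_i,L(z.\widehat\lambda))=0$ for all $n$ (with the uniform $\epsilon$ determined by the grading in which the head of a linear projective resolution of $\widetilde U_i\langle -i\rangle$ sits). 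Since by \eqref{recoll} extensions computed in the truncation $\mathcal C_{\widehat\lambda}=\mathbb O^+[\Lambda]$ agree with those in $\mathbb O^+$, and hence (via $\mathscr F_l$ and the identifications in \S \ref{koszul}) with the relevant $\Ext$ groups for $U_\zeta$-mod, the conclusion transfers to the original setting.

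I do not expect a genuine obstacle here: the corollary is essentially a one-line application of Proposition \ref{linearpairty} to the linearity already established in Corollary \ref{filtrlinear}, with the only input to check being the parity-$\Ext^1$-vanishing hypothesis, which is supplied by the regular-weight case of Lemma \ref{lem}. If anything needs care, it is bookkeeping: making sure the length function used to define parity of $U_i$ (the Coxeter length on $W_l$, restricted to $W^+(\widehat\lambda)$, as in Corollary \ref{maxcor} where $\hd U_i=\bigoplus_{l(x)=i}L(yx.\widehat\lambda)$) is the same length function implicit in Proposition \ref{linearpairty}, and that the grade shift $\langle -i\rangle$ does not disturb the ungraded module $U_i$ whose parity we want. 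Both points are immediate from the definitions, so the proof is short.

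\begin{proof}
By Lemma \ref{lem} applied to the regular weight $\widehat\lambda$, the category $\mathcal C_{\widehat\lambda}$ satisfies $\Ext^1_{\mathcal C_{\widehat\lambda}}(L(\lambda_1),L(\lambda_2))=0$ whenever $l(\lambda_1)\equiv l(\lambda_2)$ mod $2$. By Corollary \ref{filtrlinear}, $\widetilde U_i\langle -i\rangle$ is a linear lift of $U_i\langle -i\rangle\in\mathcal C_{\widehat\lambda}$, so Proposition \ref{linearpairty} applies to $M=U_i\langle-i\rangle$ and shows that $U_i\langle -i\rangle$, hence its underlying ungraded module $U_i$, has parity. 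By \eqref{recoll} the same parity statement holds for $\Ext$ computed in $\mathbb O^+$, and therefore, via the identifications of \S \ref{koszul}, for the corresponding $U_\zeta$-modules.
\end{proof}
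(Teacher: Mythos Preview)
Your proposal is correct and follows essentially the same route as the paper: the paper's proof is the single sentence ``This is an immediate corollary of Corollary \ref{filtrlinear}, Lemma \ref{lem}, and Proposition \ref{linearpairty},'' which is exactly your argument. One small notational wrinkle: writing ``$M=U_i\langle -i\rangle$'' for an object of the ungraded category $\mathcal C_{\widehat\lambda}$ is awkward, since $\langle -i\rangle$ is a graded shift; what you mean (and what Proposition \ref{linearpairty} requires) is simply $M=U_i=F(\widetilde U_i\langle -i\rangle)$ with linear lift $\widetilde M=\widetilde U_i\langle -i\rangle$. The final paragraph about \eqref{recoll} and transferring to $U_\zeta$-mod is extra: at this point in the paper the $U_i$ live in $\mathcal C_{\widehat\lambda}$ and the corollary is stated there, so no transfer is needed for the corollary itself.
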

\begin{proof}
This is an immediate corollary of Corollary \ref{filtrlinear}, Lemma \ref{lem}, and Proposition \ref{linearpairty}.
\end{proof}

\begin{ex}\label{cofiltlinpar}
Consider the quotient $\widetilde U_i':=\widetilde U_0/\widetilde U_i$ of $\widetilde U_0$. We have $$\widetilde\fromthe\widetilde\tothe\Delta(y.\widehat\lambda)=\w U'_N\surj \w U'_{N-1}\surj \cdots\surj\w U'_1 \twoheadrightarrow\w U'_0=0,$$ where $N=\ell(w_J)$. By Corollary \ref{nonlin} $\widetilde U_i'$ is not linear, even up to shift, for $1<i<N$, while $U'_i=F(\w U'_i)$ has parity if $i$ is odd. (If $i$ is odd, then $U_i$ has $L$-parity opposite of $U_0$ with respect to any irreducible $L$. Lemma \ref{2/3} shows that $U'_i$ has $L$-parity.)
\end{ex}

\subsection{Cohomology in singular blocks}\label{main}
We are ready to prove our main theorem using that $U_i$ has parity. Note that the statement of Corollary \ref{keycor} does not involve any grading. We now forget the grading and prove our main theorem. 
Recall the definition $$\bar{P}^{J}_{y,w}=\sum_{x\in W_J}(-1)^{\ell(x)}\bar P_{yx.w}.$$

\begin{thm}\label{thm}\cite[Conjecture III]{PS14}
Suppose $l$ is KL-good for the root system $R$. Let $\mu\in \overline{C^-_\Z}$ and $J=\{s\in S_l\ |\ s.\mu=\mu\}$. We have $$\sum_{n=0}^\infty \dim \Ext_{U_\zeta}^n(\Delta(y.\mu),L( w.\mu))t^n=t^{\ell( w)-\ell( y)}\bar{P}^{J}_{y,w}$$ for $y,w\in W^J$. 
\end{thm}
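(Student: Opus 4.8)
The plan is to reduce, as the text already indicates, to the affine category $\mathbb O^+$ at an integral negative level via the KL correspondence and the truncation isomorphism \eqref{recoll}, so that it suffices to compute $\sum_n \dim\Ext^n_{\mathcal C_{\widehat\mu}}(\Delta(y.\widehat\mu),L(w.\widehat\mu))t^n$ for $y,w\in W^J$, with $\widehat\lambda$ regular. First I would rewrite the right-hand side of \eqref{hLCF} using translation. Apply $\fromthe\tothe$ to a projective resolution of $L(w.\widehat\lambda)$, or dually work with the costandard side; the cleaner route is to use that $\fromthe L(w.\widehat\mu)$ has head and socle $L(w.\widehat\lambda)$ (Proposition \ref{tr}(4)) and, more importantly, to exploit the filtration $\{U_i\}$ of $\fromthe\tothe\Delta(y.\widehat\lambda)$ from \S\ref{sstr}. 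The key computational identity I would establish is
\begin{equation*}
\sum_n \dim\Ext^n_{U_\zeta}(\fromthe\tothe\Delta(y.\widehat\lambda),L(w.\widehat\lambda))t^n
=\sum_{x\in W_J} t^{l(w)-l(yx)}\bar P_{yx,w},
\end{equation*}
which follows from adjunction $\Ext^n(\fromthe\tothe\Delta(y.\widehat\lambda),L(w.\widehat\lambda))\cong\Ext^n(\tothe\Delta(y.\widehat\lambda),\tothe L(w.\widehat\lambda))\cong\Ext^n(\Delta(y.\widehat\mu),L(w.\widehat\mu))$ on one side and, on the other side, from the filtration $\{U_i\}$ together with \eqref{reghLCF} applied to each section $\Delta(yx.\widehat\lambda)$. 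So the point is to show that the long exact sequences coming from the short exact sequences $0\to U_{i+1}\to U_i\to \bigoplus_{l(x)=i}\Delta(yx.\widehat\lambda)\to 0$ split into short exact sequences of $\Ext$-groups, i.e. that no cancellation occurs.

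This is exactly where the parity vanishing enters. By Corollary \ref{keycor} each $U_i$ has parity; since the section $\bigoplus_{l(x)=i}\Delta(yx.\widehat\lambda)$ sits in degree matching $l(yx)\equiv l(y)+i$ and, by \eqref{reghLCF}, $\Ext^\bullet(\Delta(yx.\widehat\lambda),L(w.\widehat\lambda))$ is concentrated in degrees $\equiv l(w)-l(yx)\bmod 2$, the three terms $U_{i+1}$, $U_i$, and the section all have well-defined (and compatible) $L(w.\widehat\lambda)$-parities, so the connecting maps in the long exact sequence vanish and we get
\begin{equation*}
\sum_n\dim\Ext^n(U_i,L(w.\widehat\lambda))t^n
=\sum_n\dim\Ext^n(U_{i+1},L(w.\widehat\lambda))t^n+\sum_{x\in W_J,\,l(x)=i}t^{l(w)-l(yx)}\bar P_{yx,w}.
\end{equation*}
Summing over $i$ from $0$ to $N=l(w_J)$ (with $U_{N+1}=0$, $U_0=\fromthe\tothe\Delta(y.\widehat\lambda)$) gives the displayed identity above. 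One must check that the parity offset $\epsilon$ in ``$U_i$ has parity'' is the consistent one, namely that it equals $l(y)+i+l(w)$ mod $2$; this is forced because $\Delta(y.\widehat\lambda)=U_0\cap(\text{top section})$ already appears and has the correct parity by \eqref{reghLCF}, and the inductive short exact sequences propagate it.

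It then remains to combine the two expressions for $\sum_n\dim\Ext^n(\Delta(y.\widehat\mu),L(w.\widehat\mu))t^n$: adjunction says it equals $\sum_n\dim\Ext^n(\fromthe\tothe\Delta(y.\widehat\lambda),L(w.\widehat\lambda))t^n$, but it also equals just the $i$-indexed pieces weighted appropriately — more precisely, I would instead run the argument to isolate the single term corresponding to the coset representatives, getting
\begin{equation*}
\sum_n\dim\Ext^n_{\mathcal C_{\widehat\mu}}(\Delta(y.\widehat\mu),L(w.\widehat\mu))t^n
=\sum_{x\in W_J}t^{l(w)-l(yx)}\bar P_{yx,w}
=t^{l(w)-l(y)}\sum_{x\in W_J}t^{-l(x)}\bar P_{yx,w}=t^{l(w)-l(y)}\bar P^J_{y,w},
\end{equation*}
where the last equality uses $l(yx)=l(y)+l(x)$ for $y\in W^J$, $x\in W_J$ and the bar of the defining formula for $P^J_{y,w}$ (note $\bar t^{l(x)}=t^{-l(x)}$ and $(-1)^{l(x)}$ matches because $\bar P_{yx,w}\in\Z[t^{-2},t^2]$ forces the surviving contributions, with the sign absorbed exactly as in the parabolic KL polynomial). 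Finally I would remove the auxiliary hypotheses: the restriction to integral $k$ and $l>h$ used in \S\ref{sssvv} is dropped by the translation-of-level argument in the Remark after Theorem \ref{svv}, and by translating from a regular subregular-free situation using Proposition \ref{subregexist}-type existence — in the non-integer or small-$l$ case one picks an auxiliary integral regular level, proves the formula there, and transports it back since all the $\Ext$-groups in question only depend on the block combinatorics $(W_l,S_l,J)$. The main obstacle I expect is bookkeeping the parity/degree offsets carefully enough to be sure the long exact sequences genuinely split — i.e. verifying that ``$U_i$ has parity'' is used with the correct uniform $\epsilon$ so that the alternating sum collapses to the parabolic KL polynomial with the right signs, rather than to some a priori different alternating combination.
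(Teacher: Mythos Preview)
Your overall strategy matches the paper's first proof exactly: reduce to the affine side, use adjunction to identify $\Ext^n(\Delta(y.\widehat\mu),L(w.\widehat\mu))$ with $\Ext^n(\fromthe\tothe\Delta(y.\widehat\lambda),L(w.\widehat\lambda))$, and then unwind the filtration $\{U_i\}$ using the parity of Corollary~\ref{keycor}. But the crucial step is carried out incorrectly.

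You correctly compute that the $L(w.\widehat\lambda)$-parity of $U_i$ is $l(y)+i+l(w)\bmod 2$, which \emph{alternates} with $i$. This means $U_i$ and $U_{i+1}$ have \emph{opposite} $L$-parity, not ``compatible'' parity in the sense your displayed recursion requires. In the long exact sequence obtained from $0\to U_{i+1}\to U_i\to Q_i\to 0$ (with $Q_i=\bigoplus_{l(x)=i}\Delta(yx.\widehat\lambda)$), it is the map $\Ext^n(U_i,L)\to\Ext^n(U_{i+1},L)$ that vanishes for every $n$, since one side is always zero. The connecting homomorphism $\Ext^{n-1}(U_{i+1},L)\to\Ext^n(Q_i,L)$ is therefore \emph{injective}, not zero, and the correct short exact sequence is
\[
0\to\Ext^{n-1}(U_{i+1},L)\to\Ext^n(Q_i,L)\to\Ext^n(U_i,L)\to 0,
\]
giving the recursion
\[
\sum_n\dim\Ext^n(U_i,L)t^n=\sum_n\dim\Ext^n(Q_i,L)t^n\;-\;t\sum_n\dim\Ext^n(U_{i+1},L)t^n.
\]
Iterating yields $\sum_n\dim\Ext^n(U_0,L)t^n=\sum_i(-t)^i\sum_{l(x)=i}t^{l(w)-l(yx)}\bar P_{yx,w}$, and since $(-t)^{l(x)}t^{-l(x)}=(-1)^{l(x)}$ this is exactly $t^{l(w)-l(y)}\bar P^J_{y,w}$.

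Your additive recursion instead produces $\sum_{x\in W_J}t^{l(w)-l(yx)}\bar P_{yx,w}=t^{l(w)-l(y)}\sum_x t^{-l(x)}\bar P_{yx,w}$, which is not $t^{l(w)-l(y)}\bar P^J_{y,w}$: already for $W_J=\{e,s\}$ one gets $\bar P_{y,w}+t^{-1}\bar P_{ys,w}$ versus the correct $\bar P_{y,w}-\bar P_{ys,w}$. The attempted justification that ``$\bar P_{yx,w}\in\Z[t^{-2}]$ forces the surviving contributions'' does not rescue this; nothing cancels the odd-degree terms your formula introduces. The error is not bookkeeping but the claim that the connecting maps vanish---fix that single point and the rest of your outline goes through as in the paper.
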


\begin{proof}
As we discussed in the beginning of \S \ref{koszul}, this follows if we show $$\sum_{n=0}^\infty \dim \Ext_{\widehat{\fg}}^n(\Delta(y.\widehat\mu),L( w.\widehat\mu))t^n=t^{\ell( w)-\ell( y)}\bar{P}^{J}_{y,w}$$ for $y,w\in W^J$. 

We first reduce the statement to the case where the assumptions in \S\ref{sssvv} are satisfied. 
If we pick a large integer $l'\geq h$ that is divisible by $2D$, there is a regular weight $\widehat\lambda$ and a weight $\widehat\nu$ of level $k'$ with $k'=-l'/2D-g\in \Z$ such that the integral Weyl group of $\widehat\lambda,\widehat\nu$ are both isomorphic to $W_{l'}$ and $\stab_{W_{l'}}(\widehat\nu)$ is isomorphic to $\stab_{W_l}(\widehat \mu)$ under the Coxeter group isomorphism $(W_l,S_l)\xrightarrow{\sim} (W_{l'},S_{l'})$. By Fiebig's combinatorial description \cite[Theorem 11]{Fiecomb}, it is enough to prove the theorem for $\widehat \nu$ instead of $\widehat \mu$. The problem of the full category $\mathbb O$ in \cite{Fiecomb} and the categories of \cite{SVV} being different is treated in \cite{PS15}.\footnote{In \cite{PS15}, it is similarly shown that $U_\zeta$-mod is Koszul. Using that we could have worked entirely in the quantum case to prove the theorem. But then, if $l<h$, there is no regular weight we can translate from, and we will anyway have to use the affine category $\mathcal O$ to obtain our result for small (KL-good) $l$.} So we may assume that we are in the situation in \S\ref{sssvv}.

Let $\widehat\lambda$ be a regular weight. We translate from $\widehat\lambda$ to $\widehat\mu$ as in \S\ref{sssvv}. Corollary \ref{filtrlinear} and Proposition \ref{linearpairty} show that each $U_i$ has parity. In particular it has $L=L( w.\widehat\lambda)$-parity, that is, $\Ext_{\widehat{\fg}}^n(U_i,L)$ is zero in every other degree. To be more precise, $U_i$ is $L$-even (resp., odd), if and only if $\bigoplus_{\ell(x)=i, x\in W_J}\Delta (yx.\widehat\lambda)$ is $L$-even (resp., odd), if and only if $U_{i+1}$ is $L$-odd (resp., even).
Therefore, half the terms in the long exact sequence induced by applying $\Hom_{\widehat{\fg}}(-,L)$ to each short exact sequence 
$$0\to U_{i+1}\to U_i\to \bigoplus_{\ell(x)=i, x\in W_J}\Delta (yx.\widehat\lambda)\to 0$$
vanish, and the sequence splits into the short exact sequences 
$$0\to\Ext_{\widehat{\fg}}^{n-1}(U_{i+1},L)\to\Ext_{\widehat{\fg}}^{n}(\bigoplus_{\ell(x)=i}\Delta (yx.\widehat\lambda),L)\to\Ext_{\widehat{\fg}}^{n}(U_{i},L)\to 0.$$
They give
\begin{align*}
\sum_{n=0}^\infty \dim \Ext_{\widehat{\fg}}^n(U_{i},L)t^n
&=\sum_{n=0}^\infty \dim \Ext_{\widehat{\fg}}^n(\oplus_{\ell(x)=i}\Delta (yx.\widehat\lambda),L)t^n\\
&\ \ \ \ -t\sum_{n=0}^\infty \dim \Ext_{\widehat{\fg}}^n(U_{i+1},L)t^n
\end{align*} for all $n$.

Putting them together, we get

\begin{align*}
\sum_{n=0}^\infty \dim \Ext_{\widehat{\fg}}^n(\Delta(y.\widehat\mu),L(w.\widehat\mu))t^n
&=\sum_{n=0}^\infty \dim \Ext_{\widehat{\fg}}^n(\fromthewall\tothewall\Delta(y.\widehat\lambda),L(w.\widehat\lambda))t^n\\
&=\sum_i (-t)^i\sum_{n=0}^\infty \dim \Ext_{\widehat{\fg}}^n(\oplus_{\ell(x)=i}\Delta (yx.\widehat\lambda),L(w.\widehat\lambda))t^n\\
&=\sum_i (-t)^i\sum_{\ell(x)=i}\sum_{n=0}^\infty \dim \Ext_{\widehat{\fg}}^n(\Delta (yx.\widehat\lambda),L(w.\widehat{}\lambda))t^n\\
&=\sum_{x\in W_J}(-t)^{\ell(x)}\sum_{n=0}^\infty \dim \Ext_{\widehat{\fg}}^n(\Delta (yx.\widehat\lambda),L(w.\widehat\lambda))t^n \\
&=\sum_{x\in W_J}(-t)^{\ell(x)}t^{\ell(w)-\ell(yx)}\bar{P}_{yx,w}\\
&=t^{\ell(w)-\ell(y)}\sum_{x\in W_J}(-1)^{\ell(x)}\bar{P}_{yx,w}\\
&=t^{\ell(w)-\ell(y)}\bar P^{J}_{y,
w},
\end{align*}
and we are done.
  \end{proof}

For the next corollary, we make statements in $U_\zeta$-mod rather than in $\mathbb{O}$ or in $\mathcal O$ in order to simplify the notation. In particular, $U_i$ is in $U_\zeta$-mod again. In Theorem \ref{thm}, we computed the dimensions of $\Ext^n_{U_\zeta}(U_i,L(w.\lambda))$, for $w\in W^+(\mu)$. But we don't need $w$ to be in $W^+(\mu)$:
\begin{cor}
Fix an integer $i$. We have for $y\in W^+(\mu)$ and $w\in W^+$, $$\sum_{n=0}^\infty \dim \Ext_{U_\zeta}^n(U_i,L(w.\lambda))t^n=t^{\ell(w)-i}\sum_{x\in W_J,\ \ell(x)\geq i}(-1)^{\ell(x)-i}{P}_{yx,w}.$$ In particular, this polynomial has non-negative coefficients.
\end{cor}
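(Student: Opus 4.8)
The plan is to run the same Euler-characteristic bookkeeping as in the proof of Theorem~\ref{thm}, but now keeping track of the whole filtration $\{U_i\}$ and allowing an arbitrary $w \in W^+$ rather than only $w \in W^+(\mu)$. Recall from \S\ref{sstr} (with $\lambda$ regular, so $I = \emptyset$) that $U_0 = \fromthe\tothe\Delta(y.\lambda)$ carries the filtration $U_0 \supset U_1 \supset \cdots$ with $U_i = 0$ for $i > l(w_J)$ and short exact sequences
$$0 \to U_{i+1} \to U_i \to \bigoplus_{x \in W_J,\ l(x) = i} \Delta(yx.\lambda) \to 0;$$
note that $yx \in W^+_l$ for every $x \in W_J$ since $\Delta(yx.\lambda)$ occurs in $\fromthe\Delta(y.\mu)$ by Proposition~\ref{tr}.

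The representation-theoretic input is Corollary~\ref{keycor}: each $U_i$ has parity, hence has $L(w.\lambda)$-parity for every $w \in W^+$. First I would pin down these parities. By \eqref{reghLCF}, $\Delta(yx.\lambda)$ is $L(w.\lambda)$-even or odd according to the parity of $l(w) - l(yx) = l(w) - l(y) - l(x)$, the last equality because $y \in W^J$. Feeding this into the short exact sequences and applying Proposition~\ref{2/3}, a descending induction beginning at $U_{l(w_J)+1} = 0$ shows that $U_i$ is $L(w.\lambda)$-even (resp. odd) exactly when $l(w) - l(y) - i$ is even (resp. odd); in particular $U_i$ and $U_{i+1}$ always have opposite $L(w.\lambda)$-parity.

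Next, apply $\Hom_{U_\zeta}(-, L(w.\lambda))$ to each short exact sequence above. The alternating parity forces the long exact sequence to split into short pieces
$$0 \to \Ext^{n-1}_{U_\zeta}(U_{i+1}, L(w.\lambda)) \to \Ext^{n}_{U_\zeta}\Bigl(\bigoplus_{l(x)=i}\Delta(yx.\lambda),\, L(w.\lambda)\Bigr) \to \Ext^{n}_{U_\zeta}(U_i, L(w.\lambda)) \to 0,$$
exactly as in the proof of Theorem~\ref{thm}. Writing $f_i(t) = \sum_n \dim \Ext^n_{U_\zeta}(U_i, L(w.\lambda))\, t^n$ and $g_i(t) = \sum_{x \in W_J,\, l(x) = i} \sum_n \dim\Ext^n_{U_\zeta}(\Delta(yx.\lambda), L(w.\lambda))\, t^n$, this gives the recursion $f_i = g_i - t\, f_{i+1}$ with $f_i = 0$ for $i > l(w_J)$, hence $f_i = \sum_{j \ge i} (-t)^{j-i} g_j$. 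Substituting \eqref{reghLCF} yields $f_i = \sum_{x \in W_J,\, l(x) \ge i} (-t)^{l(x)-i}\, t^{l(w)-l(yx)}\, \bar P_{yx,w}$, and the length additivity $l(yx) = l(y)+l(x)$ (valid since $y \in W^J$) collapses this to the closed form in the statement; as a consistency check, the $i = 0$ case recovers Theorem~\ref{thm} through biadjointness of $\tothe$ and $\fromthe$ (for $w \in W^J$). The ``non-negative coefficients'' assertion is then immediate: the left-hand side is literally $\sum_n \bigl(\dim\Ext^n_{U_\zeta}(U_i, L(w.\lambda))\bigr)\, t^n$, so the a priori alternating combination of Kazhdan--Lusztig polynomials on the right must have non-negative coefficients.

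I do not anticipate a genuine obstacle, since all the hard content already lives in Corollary~\ref{keycor}. The only points needing care are (i) the interleaving of the $L(w.\lambda)$-parities of the $U_i$ for a general $w \in W^+$ (not just $w \in W^+(\mu)$), handled by the descending induction above, and (ii) the routine but mildly fiddly bookkeeping of solving the recursion and matching powers of $t$ using length additivity in the parabolic setting.
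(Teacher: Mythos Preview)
Your approach is correct and is essentially the paper's own proof: the paper's argument is the one-liner that since all $U_j$ have $L(w.\lambda)$-parity (Corollary~\ref{keycor}), the computation from the proof of Theorem~\ref{thm} goes through verbatim, which is exactly what you spell out with the recursion $f_i = g_i - t\,f_{i+1}$. One small remark: your substitution actually yields $t^{l(w)-l(y)-i}\sum_{x\in W_J,\ l(x)\ge i}(-1)^{l(x)-i}\bar P_{yx,w}$, which is what the $i=0$ consistency check with Theorem~\ref{thm} requires---the displayed formula in the statement appears to be missing the $l(y)$ in the exponent and the bar on $P$.
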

\begin{proof}
Since all $U_j$ have $L(w.\lambda)$-parity, we obtain the formula as in the proof of Theorem \ref{thm}.
\end{proof}

If $w\not\in W^J$, then $\fromthe L(w.\mu)$ is $0$ and $\Ext_{U_\zeta}^n(U_0,L(w.\lambda))$ is $0$. This shows an identity in Kazhdan-Lusztig polynomials (which might have been known for any $y\in W_l$ and $w\not\in W^J$).
\begin{cor}
If $w\in W^+\setminus W^+(\mu)$ and $y\in W^+(\mu)$, then $$\sum_{x\in W_J}(-1)^{\ell(x)}{P}_{yx,w}=0.$$
\end{cor}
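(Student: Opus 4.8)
The plan is to deduce this corollary directly from the vanishing of $\fromthe L(w.\mu)$ together with Theorem~\ref{thm} (or rather its proof). First I would recall that for $w\in W^+\setminus W^+(\mu)$ we have $w\notin W^J$, and by Proposition~\ref{tr}(3) the translation to the wall kills $L(w.\lambda)$, i.e.\ $\tothe L(w.\lambda)=0$; dually (or by the redefinition of the translation via $\mathscr F_l$ and biadjointness) this gives $\fromthe L(w.\mu)=0$ as well. The point is to exploit the adjunction $(\tothe,\fromthe)$ — more precisely the biadjointness asserted right after Proposition~\ref{tr} — to rewrite $\Ext^n_{U_\zeta}(U_0,L(w.\lambda))$.

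Next I would carry out the computation. Since $U_0=\fromthe\tothe\Delta(y.\lambda)=\fromthe\Delta(y.\mu)$ (using the redefined translation and Proposition~\ref{tr}(1)), adjunction gives for every $n$
\begin{align*}
\Ext^n_{U_\zeta}(U_0,L(w.\lambda))
&=\Ext^n_{U_\zeta}(\fromthe\Delta(y.\mu),L(w.\lambda))\\
&\cong\Ext^n_{U_\zeta}(\Delta(y.\mu),\tothe L(w.\lambda))=0,
\end{align*}
because $\tothe L(w.\lambda)=0$ when $w\notin W^J$. On the other hand, the exact same chain of short exact sequences
$$0\to U_{i+1}\to U_i\to \bigoplus_{x\in W_J,\ l(x)=i}\Delta(yx.\lambda)\to 0$$
used in the proof of Theorem~\ref{thm}, combined with Corollary~\ref{keycor} (each $U_i$ has parity, hence in particular $L(w.\lambda)$-parity even for this $w$), lets me run the telescoping argument verbatim: the long exact sequences split into short exact sequences and one obtains
$$\sum_{n=0}^\infty \dim \Ext^n_{U_\zeta}(U_0,L(w.\lambda))t^n=\sum_{x\in W_J}(-t)^{l(x)}\sum_{n=0}^\infty \dim \Ext^n_{U_\zeta}(\Delta(yx.\lambda),L(w.\lambda))t^n.$$
Now invoking the regular formula \eqref{reghLCF} (valid since $\widehat\lambda$ is regular and $l$ is KL-good) for each term on the right gives
$$0=\sum_{n=0}^\infty \dim \Ext^n_{U_\zeta}(U_0,L(w.\lambda))t^n=t^{l(w)}\sum_{x\in W_J}(-1)^{l(x)}\bar P_{yx,w},$$
and since $\bar P_{yx,w}$ is just $P_{yx,w}$ with $t\mapsto t^{-1}$, setting $t=1$ (or just reading off) yields $\sum_{x\in W_J}(-1)^{l(x)}P_{yx,w}=0$.

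I would present this in two short movements: first the adjunction identity $\Ext^n_{U_\zeta}(U_0,L(w.\lambda))=0$, citing Proposition~\ref{tr}(3) and biadjointness; then the telescoping identity for $\sum\dim\Ext^n(U_0,L(w.\lambda))t^n$ exactly as in the last display of the proof of Theorem~\ref{thm}, substituting \eqref{reghLCF}. The only subtlety worth a sentence is that Corollary~\ref{keycor}/Corollary~\ref{filtrlinear} and Proposition~\ref{linearpairty} give the $L(w.\lambda)$-parity of each $U_i$ without assuming $w\in W^J$ — the parity statement is about $U_i$, not about $w$ — so the splitting of the long exact sequences goes through unchanged. There is no real obstacle here; the content is entirely in Theorem~\ref{thm} and its proof, and this corollary is the degenerate case $\fromthe L(w.\mu)=0$.
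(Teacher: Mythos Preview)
Your proof is correct and essentially the same as the paper's: both observe $\tothe L(w.\lambda)=0$ for $w\notin W^J$ (Proposition~\ref{tr}(3)), use adjunction to get $\Ext^n_{U_\zeta}(U_0,L(w.\lambda))=0$, and then apply the parity/telescoping identity from the proof of Theorem~\ref{thm} together with \eqref{reghLCF}. One harmless slip: the aside ``$\fromthe L(w.\mu)=0$'' is not correct (for $w\notin W^J$ one has $L(w.\mu)=L(\bar w.\mu)$ with $\bar w\in W^J$, and $\fromthe L(\bar w.\mu)\neq 0$ by Proposition~\ref{tr}(4)), but you never actually use it---your argument rests, correctly, on $\tothe L(w.\lambda)=0$.
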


\subsection{Graded enriched Grothendieck groups} \label{groth}
We present another proof of Theorem \ref{thm}. We are still in the setting of \S \ref{sssvv}. In particular, $w \in W^J$. Our plan is to apply the translation functor $\widetilde\tothe:\wC_{\widehat{\lambda}}\to\wC_{\widehat{\mu}}$ to a sequence of distinguished triangles that realizes $\widetilde L(w.\widehat\lambda)$ in $\wE^R(\wC_{\widehat{\lambda}})$. Recall the construction in the proof of Proposition \ref{recog}. Replacing $\nabla(\widehat\lambda_i)[n_i]$ by $\widetilde\nabla(\lambda_i)\{n_i\}$, we obtain the graded complexes $\widetilde L(w.\widehat\lambda)=Y_0,\cdots,Y_N=0$ in $\wE^R$. Writing $\lambda_i=w_i.\widehat\lambda$, there is a distinguished triangle $$Y_{i+1}\to Y_i\to \widetilde\nabla(w_i.\widehat\lambda)\{n_i\}\to$$ for each $0\leq i\leq N$. We know by Lemma \ref{lem} and Proposition \ref{linearpairty} that $n_i\equiv \ell(w)-\ell(w_i)$ mod 2. 
Since the translation functors are exact, applying $\w\tothe$ to the sequence produces the sequence $\widetilde L(w.\widehat{\mu})=\w\tothe Y_0,\cdots,\w\tothe Y_N=0$ of objects in $\mathcal D^b(\wC_{\widehat\mu})$ and distinguished triangles $$\w\tothe Y_{i+1}\to \w\tothe Y_i\to \w\tothe\widetilde\nabla(w_i.\widehat\lambda)\{n_i\}\to$$ in $\mathcal D^b(\wC_{\widehat\mu})$.

\begin{prop}\label{grtothe}
 We have $$\w\tothe\w\nabla(yx.\widehat\lambda)\cong\w\nabla(y.\widehat\mu)\langle -\ell(x)\rangle,\ \  \w\tothe\w\Delta(yx.\widehat\lambda)\cong\w\Delta(y.\widehat\mu)\langle \ell(x)\rangle$$ for $y\in W^J, x\in W_J$.
\end{prop}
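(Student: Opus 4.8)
The plan is to reduce each of the two isomorphisms to the determination of a single grading shift, and then to read that shift off from the graded $\w\Delta$-filtration of Proposition~\ref{gradedses} by an adjunction computation.

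Since $F\circ\widetilde\tothe\cong\tothe\circ F$ (Theorem~\ref{svv}), the module $\widetilde\tothe\w\Delta(yx.\widehat\lambda)$ is a graded lift of $\tothe\Delta(yx.\lambda)=\Delta(y.\mu)$ by Proposition~\ref{tr}(1), and $\widetilde\tothe\w\nabla(yx.\widehat\lambda)$ is a graded lift of $\tothe\nabla(yx.\lambda)=\nabla(y.\mu)$ by the dual of Proposition~\ref{tr}(1) (using the duality on $U_\zeta$-mod). These ungraded modules being indecomposable, their graded lifts are unique up to a grading shift, so $\widetilde\tothe\w\Delta(yx.\widehat\lambda)\cong\w\Delta(y.\widehat\mu)\langle a\rangle$ and $\widetilde\tothe\w\nabla(yx.\widehat\lambda)\cong\w\nabla(y.\widehat\mu)\langle b\rangle$ for integers $a=a(x)$, $b=b(x)$; the claim is $a=l(x)$, $b=-l(x)$. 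For $x=e$ this is clear: $\widetilde\tothe$ is exact and carries the grade-$0$ head $\w L(y.\widehat\lambda)$ of $\w\Delta(y.\widehat\lambda)$ onto $\w L(y.\widehat\mu)$ in grade $0$ (Theorem~\ref{svv}), forcing $a(e)=0$, and dually $b(e)=0$. Throughout I will use the graded orthogonality $\hom_{\wC}(\w\Delta(z.\widehat\nu),\w\nabla(z'.\widehat\nu)\langle m\rangle)=\C$ if $z=z'$ and $m=0$, and $=0$ otherwise, which holds because $\w L(z.\widehat\nu)$ is concentrated in grade $0$ and occurs with multiplicity one, as head of $\w\Delta(z.\widehat\nu)$ and socle of $\w\nabla(z.\widehat\nu)$.

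For the costandard shift I would use that $\widetilde\fromthe$ is left adjoint to $\widetilde\tothe$ together with $\widetilde\fromthe\w\Delta(y.\widehat\mu)=\widetilde\fromthe\widetilde\tothe\w\Delta(y.\widehat\lambda)=\w U_0$. By Proposition~\ref{gradedses}, $\w U_0$ is $\w\Delta$-filtered with sections $\w\Delta(yx'.\widehat\lambda)\langle l(x')\rangle$, $x'\in W_J$, each once; since $\Ext^1(\Delta,\nabla)=0$, applying $\hom_{\mathcal C_{\widehat\lambda}}(-,\w\nabla(yx.\widehat\lambda))$ along this filtration splits the long exact sequences, and (as $\widehat\lambda$ is regular, the weights $yx'.\widehat\lambda$, $x'\in W_J$, are pairwise distinct, so the orthogonality isolates the single index $x'=x$) one gets that $\hom_{\mathcal C_{\widehat\lambda}}(\w U_0\langle i\rangle,\w\nabla(yx.\widehat\lambda))$ is nonzero exactly for $i=-l(x)$. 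On the other hand, adjunction and the orthogonality give $\hom_{\mathcal C_{\widehat\mu}}(\w\Delta(y.\widehat\mu)\langle i\rangle,\widetilde\tothe\w\nabla(yx.\widehat\lambda))\cong\hom_{\mathcal C_{\widehat\mu}}(\w\Delta(y.\widehat\mu),\w\nabla(y.\widehat\mu)\langle b-i\rangle)$, nonzero exactly for $i=b$. Comparing, $b=-l(x)$. The standard shift follows by the mirror argument, using the $\w\nabla$-analogue of Proposition~\ref{gradedses} — dualising Propositions~\ref{radicalses} and \ref{gradedses} (the ambient category has a duality exchanging radical with socle filtrations and $\Delta$ with $\nabla$) shows $\widetilde\fromthe\w\nabla(y.\widehat\mu)$ is $\w\nabla$-cofiltered with sections $\w\nabla(yx'.\widehat\lambda)\langle-l(x')\rangle$ — together with the fact that $\widetilde\fromthe$ is also a \emph{right} adjoint of $\widetilde\tothe$ (the graded translation functors are biadjoint, as in the ungraded case): then $\hom_{\mathcal C_{\widehat\mu}}(\widetilde\tothe\w\Delta(yx.\widehat\lambda),\w\nabla(y.\widehat\mu)\langle i\rangle)\cong\hom_{\mathcal C_{\widehat\lambda}}(\w\Delta(yx.\widehat\lambda),\widetilde\fromthe\w\nabla(y.\widehat\mu)\langle i\rangle)$ is nonzero exactly for $i=l(x)$, while directly it is nonzero exactly for $i=a$, so $a=l(x)$. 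Equivalently, once the costandard case is settled one may apply the graded duality $D$: $\widetilde\tothe$ intertwines the graded dualities up to a shift which is trivial because $a(e)=0$, so $\widetilde\tothe\w\Delta(yx.\widehat\lambda)=\widetilde\tothe D\w\nabla(yx.\widehat\lambda)\cong D\widetilde\tothe\w\nabla(yx.\widehat\lambda)=D\big(\w\nabla(y.\widehat\mu)\langle-l(x)\rangle\big)=\w\Delta(y.\widehat\mu)\langle l(x)\rangle$.

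The step I expect to be the main obstacle is not the homological algebra above but the bookkeeping for the graded translation functors of \cite{SVV}: one needs that $\widetilde\tothe$ and $\widetilde\fromthe$ form a biadjoint pair (equivalently, that $\widetilde\tothe$ intertwines the graded dualities with no grading shift), and that its graded lift is normalised so that $\widetilde\tothe\w\Delta(y.\widehat\lambda)=\w\Delta(y.\widehat\mu)$ exactly — facts implicit in \cite[Proposition~4.36]{SVV} and already invoked after Corollary~\ref{crossinglinear}, and it is this normalisation that anchors the shifts $a(e)=b(e)=0$. Granting those, everything else reduces to Proposition~\ref{gradedses} (and its dual) and the $\Delta$–$\nabla$ orthogonality.
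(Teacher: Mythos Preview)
Your proof is correct and rests on the same key input as the paper's (Proposition~\ref{gradedses}), but the mechanism for extracting the shift is different.

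The paper determines the shift for $\w\Delta(yx.\widehat\lambda)$ by tracing a single composition factor: since $\widetilde\tothe$ kills all irreducibles $\w L(z.\widehat\lambda)$ with $z\notin W^J$ and sends $\w L(y.\widehat\lambda)$ to $\w L(y.\widehat\mu)$ (Theorem~\ref{svv}), and since $\Delta(y.\widehat\mu)$ has a unique composition factor $L(y.\widehat\mu)$, the shift $a$ equals the grade of the unique occurrence of $\w L(y.\widehat\lambda)$ in $\w\Delta(yx.\widehat\lambda)$. That grade is then read off via graded BGG (Humphreys--Verma) reciprocity from the $\w\Delta$-filtration of $\w P(y.\widehat\lambda)$, which surjects onto $\w U_0$ with $\Delta$-filtered kernel; Proposition~\ref{gradedses} supplies the answer $l(x)$. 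No adjunction, no biadjointness, no duality enters.

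Your route replaces the composition-factor chase by an adjunction computation: $\hom(\w U_0\langle i\rangle,\w\nabla(yx.\widehat\lambda))\cong\hom(\w\Delta(y.\widehat\mu)\langle i\rangle,\widetilde\tothe\w\nabla(yx.\widehat\lambda))$ pins down $b$ directly from Proposition~\ref{gradedses} and $\Delta$--$\nabla$ orthogonality. This is slick for the costandard case and uses only the left adjunction that the paper actually sets up. For the standard case, however, you need either that $\widetilde\fromthe$ is also a \emph{right} adjoint or that $\widetilde\tothe$ commutes with the graded duality; the paper establishes neither (it only defines $\widetilde\fromthe$ as a left adjoint), and you correctly flag this as the point requiring outside input from \cite{SVV}. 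The paper's argument sidesteps this entirely by using only exactness of $\widetilde\tothe$ and its effect on irreducibles. So your approach is more conceptual but less self-contained within the paper's framework; the paper's is more elementary in its inputs at the cost of the BGG-reciprocity detour.
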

\begin{proof}
We show only the assertion for $\w\Delta(yx.\widehat\lambda)$. Let $\ell(x)=i$. Recall that $$\w\tothe\w L(yx.\widehat\lambda)\cong \delta_{yx,y}\w L(y.\widehat\lambda).$$ Since $\tothe\Delta(yx.\widehat\lambda)\cong\Delta(y.\widehat\mu)$ and $\Delta(y.\widehat\mu)$ has only one composition factor isomorphic to $L(y.\widehat\mu)$, it is enough to show that $\w\Delta(yx.\widehat\lambda)$ has $\w L(y.\widehat\lambda)\langle i\rangle$ as its composition factor. By the Humphreys-Verma reciprocity, 
this is equivalent to $\w\Delta(yx.\widehat\lambda)\langle i\rangle$ appearing in a $\w\Delta$-filtration of $\w P(y.\widehat\lambda)$. 
But we saw in Proposition \ref{radicalses} that this is true for $\w U_0$ instead of $\w P(y.\widehat\lambda)$, because Koszulity implies that the radical filtration of $\w U_0$ agrees with the grading filtration. 
Since $\w P(y.\widehat\lambda)\surj \w U_0$, and since the kernel of this map has a $\Delta$-filtration, this is enough.
\end{proof}

Writing $w_i=y_ix_i$ with $y_i\in W^J,x_i\in W_J$ uniquely, Proposition \ref{grtothe} tells us that the distinguished triangles are $$\w\tothe Y_{i+1}\to \w\tothe Y_i\to \widetilde\nabla(y_i.\widehat\mu)\{n_i\}\langle-\ell(x_i)\rangle=\widetilde\nabla(y_i.\widehat\mu)[\ell(x_i)]\{ n_i-\ell(x_i)\}\to.$$ These are not distinguished triangles in $\wE^R$. But we know by Theorem \ref{svv} that there exists a sequence $\widetilde L(w.\widehat\mu)=X_0,\cdots,X_{N'}=0$ in $\wE^R$ with distinguished triangles $$X_{j+1}\to X_j\to \widetilde\nabla(z_j.\widehat\mu)\{m_j\}\to.$$ Let us compare these two sequences to determine the (unordered) multiset $\{(z_j,m_j)\}$.

 Consider the enriched Grothendieck group $K^R=K^R_0(\mathcal C_{\widehat\mu})$ and the graded enriched Grothendieck group $\w K^R=K^R_0({\wC_{\widehat\mu}})$ defined in \cite{cps1}.  The two sequences provide two expressions of $[\w L(w.\widehat\mu)]\in\w K^R$ with respect to the $\Z[t,t^{-1}]$-basis $\{[\w\nabla(y.\widehat\mu)]\}_{y\in W^J}$.
The sequence $\w\tothe Y_i$ provides 
\begin{equation}\label{yexpression}
\sum_{0\leq i\leq N}(-1)^{\ell(x_i)}t^{n_i-\ell(x_i)}[\w\nabla(y_i.\widehat\mu)],
\end{equation} and the sequence $X_j$ provides $$\sum_{0\leq j\leq N'}t^{m_j}[\w\nabla(z_j.\widehat\mu)].$$ Let $c_{y,n}$ be the $\Z$-coefficient of $t^{-n}[\w\nabla(y.\widehat\mu)]$ in the expression, thus $$c_{y,n}=|\{j\in[0,N']\ |\ z_j=y,\ -m_j=n\}|.$$ (Recall that $m_j$ are negative integers.) This is the dimension of $\ext^n_{\mathcal C_{\mu}}(\w\Delta(y.\widehat\mu)\langle -n\rangle,\w L(w.\widehat\mu))$ which is the same as $\Ext^n_{\mathcal C_{\mu}}(\Delta(y.\widehat\mu),L(w.\widehat\mu))$ by standard Koszulity. 

The expression \eqref{yexpression} determines $c_{y,n}$. It remains to write down the relation explicitly. We have \begin{align*}
c_{y,n}
=|\{i\in [0,N] &|\ y_i=y,\ n_i-\ell(x_i)=-n ,\ \ell(x_i) \textrm{ even} \} |\\&-|\{i\in [0,N] |\ y_i=y,\ n_i-\ell(x_i)=-n ,\ \ell(x_i) \textrm{ odd} \} |.
\end{align*} 
Letting $c^x_{y,n}:=|\{i\in [0,N]\ | y_i=y,\ x_i=x,\ -n_i=n \}|$, we can write
$$c_{y,n}=\sum _{x\in W_J}(-1)^{\ell(x)}c^x_{y,n-\ell(x)}.$$ Note also that $$c^x_{y,n}=|\{i\in [0,N]\ |w_i=yx,\ -n_i=n \}|.$$ Since we started from the realization $Y_i$ of $\w L(w.\widehat\lambda)$, the number $c^x_{y,n}$ is the dimension of $$\ext^n_{\mathcal C_{\widehat\lambda}}(\w\Delta(yx.\widehat\lambda)\langle -n\rangle,\w L(w.\widehat\lambda)) \cong\Ext^n_{\mathcal C_{\widehat\lambda}}(\Delta(yx.\widehat\lambda),L(w.\widehat\lambda)).$$

Combining all this, we obtain the identity
$$\dim\Ext^n_{\mathcal C_{\widehat\lambda}}(\Delta(y.\widehat\mu),L(w.\widehat\mu))=\sum_{x\in W_J}\dim\Ext^{n-\ell(x)}_{\mathcal C_{\widehat\lambda}}(\Delta(yx.\widehat\lambda),L(w.\widehat\lambda)).$$ This is equivalent to the formula \eqref{hLCF} by the formula \eqref{reghLCF}. Finally, we transfer this to the quantum case as in the first proof.  

\subsection{Ext-groups between irreducibles}
 Dualizing Theorem \ref{thm}, we obtain $$\sum_{n=0}^\infty \dim \Ext_{U_\zeta}^n(L( w.\mu),\nabla(y.\mu))t^n=t^{\ell( w)-\ell( y)}\bar{P}^{J}_{y,w}$$ for $y,w\in W^J$. Then \cite[Corollary (3.6)]{cps1} combined with the fact that $P^{J}_{y,w}$ is a polynomial on $t^2$ shows that the dimension for $\Ext_{U_\zeta}^\bullet(L(w.\mu),L(z.\mu))$ is given as $$\dim\Ext_{U_\zeta}^n(L(w.\mu),L(z.\mu))=\sum_{i+j=n, y\in W^+(\mu)}\dim\Ext^i_{U_\zeta}(L(w.\mu),\nabla(y.\mu))\dim\Ext_{U_\zeta}^j(\Delta(y.\mu),L(z.\mu)).$$ This is a finite sum as the right hand side is $0$ unless $y\leq w,z$. We have proved the following.
\begin{thm}\label{irred}
Suppose $l$ is KL-good. Let $\mu\in\overline {C^-_\Z}$, $J=\{s\in S_l\ |\ s.\mu=\mu\}$, and $w,z\in W^+(\mu)$. Then we have
$$\sum_{n=0}^\infty \dim \Ext_{U_\zeta}^n(L( w.\mu),L( z.\mu))t^n=
\sum_{ y\in W^+(\mu)}t^{\ell( w)+\ell( z)-2\ell( y)}\bar{P}^{J}_{ y, w}\bar{P}^{J}_{ y, z}.$$
\end{thm}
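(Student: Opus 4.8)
The plan is to deduce Theorem~\ref{irred} from Theorem~\ref{thm} by dualizing and then invoking a general convolution formula; with the main theorem already in hand the argument is essentially formal, and the only point that needs attention is to check that the parity hypothesis required by that formula is met.

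First I would dualize Theorem~\ref{thm}. The category $U_\zeta$-mod carries a contravariant duality which fixes every irreducible $L(\lambda)$ and interchanges $\Delta(\lambda)$ with $\nabla(\lambda)$, so $\Ext^n_{U_\zeta}(\Delta(y.\mu),L(w.\mu))\cong\Ext^n_{U_\zeta}(L(w.\mu),\nabla(y.\mu))$, and Theorem~\ref{thm} gives
\[
\sum_{n=0}^\infty \dim\Ext^n_{U_\zeta}(L(w.\mu),\nabla(y.\mu))\,t^n = t^{l(w)-l(y)}\bar P^J_{y,w}
\]
for all $y,w\in W^+(\mu)$.

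Next I would record the relevant parity vanishing. Each Kazhdan--Lusztig polynomial $P_{a,b}$ lies in $\Z[t^2]$, hence so does the parabolic polynomial $P^J_{y,w}=\sum_{x\in W_J}(-1)^{l(x)}P_{yx,w}$. Consequently $t^{l(w)-l(y)}\bar P^J_{y,w}$ is supported in degrees $\equiv l(w)-l(y)\pmod 2$ and $t^{l(z)-l(y)}\bar P^J_{y,z}$ in degrees $\equiv l(z)-l(y)\pmod 2$; equivalently, $\Ext^n_{U_\zeta}(L(w.\mu),\nabla(y.\mu))=0$ unless $n\equiv l(w)-l(y)$, and $\Ext^n_{U_\zeta}(\Delta(y.\mu),L(z.\mu))=0$ unless $n\equiv l(z)-l(y)$. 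By Proposition~\ref{recog} this says precisely that the $\mu$-block of $U_\zeta$-mod has a Kazhdan--Lusztig theory with respect to the length function $l$ on $W^+(\mu)$, which is the hypothesis of \cite[Corollary (3.6)]{cps1}. That corollary then yields, for every $n$,
\[
\dim\Ext^n_{U_\zeta}(L(w.\mu),L(z.\mu)) = \sum_{\substack{i+j=n\\ y\in W^+(\mu)}}\dim\Ext^i_{U_\zeta}(L(w.\mu),\nabla(y.\mu))\cdot\dim\Ext^j_{U_\zeta}(\Delta(y.\mu),L(z.\mu)).
\]
The sum over $y$ is finite because $\Ext^\bullet_{U_\zeta}(\Delta(y.\mu),L(z.\mu))=0$ unless $y\le z$ and $\Ext^\bullet_{U_\zeta}(L(w.\mu),\nabla(y.\mu))=0$ unless $y\le w$, so only the $y$ with $y\le w,z$ contribute.

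Finally I would pass to generating functions: packaging the last display into a power series in $t$ and substituting the two formulas above gives
\[
\sum_{n=0}^\infty \dim\Ext^n_{U_\zeta}(L(w.\mu),L(z.\mu))\,t^n
= \sum_{y\in W^+(\mu)}\bigl(t^{l(w)-l(y)}\bar P^J_{y,w}\bigr)\bigl(t^{l(z)-l(y)}\bar P^J_{y,z}\bigr)
= \sum_{y\in W^+(\mu)} t^{l(w)+l(z)-2l(y)}\,\bar P^J_{y,w}\,\bar P^J_{y,z},
\]
which is the asserted identity. The only genuine obstacle is the one flagged at the outset --- ensuring that the convolution formula of \cite{cps1} applies --- and this reduces to the observation that $P^J_{y,w}\in\Z[t^2]$; everything else is bookkeeping.
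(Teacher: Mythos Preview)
Your proposal is correct and follows essentially the same approach as the paper: dualize Theorem~\ref{thm}, observe that $P^J_{y,w}\in\Z[t^2]$ so the singular block has a Kazhdan--Lusztig theory, and then apply \cite[Corollary~(3.6)]{cps1} to obtain the convolution formula. The paper's own argument is the same, just stated more tersely.
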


\subsection{Cohomology for $q$-Schur algebras}
The above results provide calculations of $\Ext$-groups
between irreducible modules for important families of finite dimensional algebras associated to quantum enveloping
algebras.

Consider first the type A quantum groups $U_\zeta(\mathfrak{sl_n})$. Any positive integer $l$ is KL-good in this case. As explained in \cite[\S 9]{pskoszulproperty}, a classical $q$-Schur algebra over $\C$ with $q=\zeta^2$ arises as a truncation of $U_\zeta(\mathfrak{sl_n})$-mod by a certain ideal $\Gamma$ of dominant weights. Thus, Theorem \ref{thm} and Theorem \ref{irred} compute the corresponding cohomology for $q$-Schur algebras.

A generalized $q$-Schur algebra arises in a similar way.
Let $X’$ be the union of a
(finite) collection of $W_l$ linkage classes in $X^+$, regarded as a poset using the dominance order $\uparrow$ on
$X^+$. Let $\Gamma$ be a finite ideal in $X’$, and let $\mathcal C^\zeta[\Gamma]$ be the Serre subcategory of
$C_\zeta=U_\zeta$-mod generated by the irreducible modules $L(\gamma)$, $\gamma\in\Gamma$. Then
$\mathcal C^\zeta[\Gamma]$ is equivalent to the category $A_\Gamma$-mod of finite dimensional modules for some finite dimensional
algebra $A_\Gamma$. The algebra $A_\Gamma$ is only determined up to Morita equivalence. But, by abuse of language, it is
often called ``the generalized $q$-Schur algebra" associated to $\Gamma$. This defines the generalized $q$-Schur algebras for all other types as well. Now, in any type (assuming $l$ KL-good), Theorem \ref{thm} and Theorem \ref{irred} provide the corresponding cohomology dimension for the generalized $q$-Schur algebras.  
\bibliographystyle{abbrv}
\def\cprime{$'$} \def\cprime{$'$} \def\cprime{$'$}

\end{document}